\newcommand{\w}{\omega}
\newcommand{\mr}{\mathrm}
\newcommand{\mc}{\mathcal}
\newcommand{\bsigma}{\mathbf{\mathop{\pmb{\sum}}}}
\newcommand{\set}[1]{\{#1\}}
\newcommand{\PPI}{\mathbf{PPI}}
\newcommand{\noopsort}[1]{}
\newtheorem{theorem}{Theorem}[section]
\newtheorem{lemma}[theorem]{Lemma}
\newtheorem{eg}{Example}
\newtheorem{cor}[theorem]{Corollary}
\newenvironment{Proof}{\begin{trivlist} \item[] {\bf Proof.}}{\hspace*{0pt}\hfill\qedsymbol\end{trivlist}}
\newsavebox{\Prfref}
\newsavebox{\prfref}
\newtheoremstyle{ref}
{\topsep}	
{\topsep}	
{\it}
{}
{}
{}
{ }
{\thmname{{\bfseries#1}}\thmnumber{ \textbf{#2\thmnote{\rm #3}\textbf .}}}
\theoremstyle{ref}
\newtheorem{lem}[theorem]{Lemma}
\newtheorem{thm}[theorem]{Theorem}
\newtheorem{prop}[theorem]{Proposition}
\newtheorem{prob}{Problem}
\theoremstyle{nnref}
\newtheorem*{defn}{Definition}
\begin{document}
\title{Normality versus paracompactness in locally compact spaces}
\author{Alan Dow{$^1$} and Franklin D. Tall{$^2$}}

\footnotetext[1]{Research supported by NSF grant DMS-1501506.}
\footnotetext[2]{Research supported by NSERC grant A-7354.\vspace*{2pt}}
\date{\today}
\maketitle

\begin{abstract}
This note provides a correct proof of the result claimed by the second author that locally compact normal spaces are collectionwise Hausdorff in certain models obtained by forcing with a coherent Souslin tree. A novel feature of the proof is the use of saturation of the non-stationary ideal on $\omega_1$, as well as of a strong form of Chang's Conjecture. Together with other improvements, this enables the consistent characterization of locally compact hereditarily paracompact spaces as those locally compact, hereditarily normal spaces that do not include a copy of $\omega_1$.
\end{abstract}

\renewcommand{\thefootnote}{}
\footnote
{\parbox[1.8em]{\linewidth}{$2000$ Math.\ Subj.\ Class.\ Primary 54A35, 54D20, 54D45, 03E35; Secondary 03E50, 03E55, 03E57.}\vspace*{5pt}}
\renewcommand{\thefootnote}{}
\footnote
{\parbox[1.8em]{\linewidth}{Key words and phrases: normal, paracompact, locally compact, countably tight,
collectionwise Hausdorff, forcing with a coherent Souslin tree, Martin's Maximum,
PFA(S)[S], {Axiom R}, Moving Off Property.}}

\section{Introduction}
The space of countable ordinals is locally compact, normal, but not paracompact. The question of what additional conditions make a locally compact normal space paracompact has a long history.  At least 45 years ago, it was recognized that subparacompactness plus collectionwise Hausdorffness would do (see e.g. \cite{T1}), as would perfect normality plus metacompactness \cite{A}.  Z. Balogh proved a variety of results under MA$_{\omega_1}$ \cite{B1} and \textbf{Axiom R} \cite{B2}, and was the first to realize the importance of not including a perfect pre-image of $\omega_1$ (equivalently, the one-point compactification being countably tight \cite{B1}).  However, he assumed collectionwise Hausdorffness in order to obtain paracompactness.  A breakthrough came with S. Watson's proof that:

\begin{prop}[{~\cite{W}}]
$V = L$ implies locally compact normal spaces are collectionwise Hausdorff, and hence locally compact normal metacompact spaces are paracompact.
\end{prop}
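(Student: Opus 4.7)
The plan is to split the proof into two parts: establishing collectionwise Hausdorffness for locally compact normal spaces under $V=L$, and then deriving paracompactness from collectionwise Hausdorffness plus metacompactness. The combinatorial input from $V=L$ I would rely on is $\diamondsuit_\kappa$ for every regular uncountable $\kappa$ that is not weakly compact, together with $\square_\kappa$ for every singular $\kappa$; both principles will be needed.

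Given a locally compact normal space $X$ and a closed discrete $D = \{d_\alpha : \alpha < \kappa\} \subseteq X$, I would induct on $\kappa$ to produce pairwise disjoint open $U_\alpha \ni d_\alpha$. Local compactness furnishes, once and for all, compact neighborhoods $K_\alpha \ni d_\alpha$. The base case $\kappa = \omega_1$ is the crux: build $U_\alpha \subseteq K_\alpha$ recursively so that the previously chosen $\{U_\beta : \beta < \alpha\}$ remain pairwise disjoint and miss $d_\alpha$. Normality separates $d_\alpha$ from $\ov{\bigcup_{\beta < \alpha} U_\beta}$ provided that closure misses $d_\alpha$, and the obstruction at limit $\alpha$ is exactly that it might not. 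A $\diamondsuit_{\omega_1}$-sequence is used to anticipate this: on a stationary set, $\diamondsuit$ guesses a countable approximation to the future data (a partial neighborhood assignment together with a threatening closed set), and the construction is arranged so that whenever $\diamondsuit$ correctly predicts the threat at $\alpha$, applying normality inside a suitable countable elementary submodel produces a $U_\alpha$ whose closure stays uniformly away from the $d_\gamma$'s it must avoid.

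The inductive step for regular $\kappa > \omega_1$ proceeds along the same lines using $\diamondsuit_\kappa$, while the singular case uses $\square_\kappa$ to coherently express $D$ as a directed union of smaller closed discrete pieces, apply the inductive hypothesis on each piece, and glue the resulting local separations by exploiting the coherence of the $\square$-sequence. The main obstacle throughout is that normality is intrinsically a two-set-separation tool, so to separate $\kappa$-many points simultaneously one must anticipate obstructions before they arise; this is exactly what the $\diamondsuit$-sequences encode, and getting the coding correct (what the guessed object is, and why a single application of normality at a $\diamondsuit$-stage suffices to avoid catastrophe at all later stages) is the delicate point.

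For the metacompactness consequence, a standard argument in the spirit of Michael and Nagami finishes the proof: a locally compact, normal, metacompact, collectionwise Hausdorff space is paracompact. Given a point-finite open refinement by sets with compact closures, collectionwise Hausdorffness is precisely what is needed to separate the closed discrete obstructions that prevent shrinking the point-finite cover to a locally finite one, and local compactness ensures that the required compact neighborhoods are always available.
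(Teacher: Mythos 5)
The paper does not reprove this result; it quotes it from Watson and, immediately afterwards, records the two ingredients of his proof: (i) \emph{character reduction} --- in a locally compact normal space, to separate a closed discrete $D$ of regular size $\kappa$ one first expands each $d_\alpha$ to a compact $K_\alpha$ with an outer base of size $\leq\kappa$ and separates the $K_\alpha$'s instead --- and (ii) Fleissner's theorem that in $L$ normal spaces of character $\leq\aleph_1$ (and the higher-cardinal analogues) are collectionwise Hausdorff. Your proposal omits (i) entirely, and that omission is a genuine gap, not a stylistic difference. The crux of your argument is that a $\diamondsuit_{\omega_1}$-sequence ``guesses a countable approximation to the future data (a partial neighborhood assignment together with a threatening closed set).'' For $\diamondsuit$ to guess a partial neighborhood assignment on $\alpha$, that assignment must be coded by a subset of $\alpha$ (equivalently, by a function $\alpha\to\omega_1$ relative to some fixed enumeration of candidate neighborhoods). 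This is available exactly when the points of $D$ have character $\leq\aleph_1$. A locally compact space can have points of arbitrarily large character, so there is no fixed small list of candidate neighborhoods for the guessing sequence to anticipate, and the recursion never gets started. This is precisely the obstruction Watson's outer-base/quotient trick removes, and it is why the proposition is stated for locally compact spaces rather than for all normal spaces (where the conclusion is false in ZFC, e.g.\ Bing's Example G).

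Two secondary points. First, even after the character reduction, plain $\diamondsuit$ is not the principle used: Fleissner's argument needs $\diamondsuit$ for stationary systems, which the paper explicitly calls ``a strengthening of $\diamondsuit$ that holds in $L$''; your sketch of ``a single application of normality at a $\diamondsuit$-stage'' does not explain how ordinary $\diamondsuit_{\omega_1}$ would suffice, and as far as is known it does not. Second, your treatment of the induction on $\kappa$ is off in both directions: the singular case is a ZFC theorem for normal spaces ($<\kappa$-collectionwise Hausdorff implies $\kappa$-collectionwise Hausdorff for singular $\kappa$), so $\square_\kappa$ is not needed, while your hypothesis list provides no principle at all at weakly compact $\kappa$ (where $\diamondsuit_\kappa$ in fact still holds in $L$, and where one instead exploits reflection), leaving that case uncovered. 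The final paragraph on metacompactness is in the right spirit --- collectionwise Hausdorffness plus local compactness upgrades to enough collectionwise normality for Michael-style theorems to apply --- but it, too, silently relies on first separating compact sets rather than points, i.e.\ on the same character-reduction idea you have not supplied.
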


Watson's proof crucially involved the idea of \emph{character reduction}: if one wants to separate a closed discrete subspace of size $\kappa$, $\kappa$ regular, in a locally compact normal space, it suffices to separate $\kappa$ compact sets, each with an \textit{outer base} of size $\leq \kappa$.

\begin{defn}
An {\normalfont\textbf{outer base}} for a set $K \subseteq X$ is a collection $\mc{B}$ of open sets including $K$ such that each open set including $K$ includes a member of $\mc{B}$.
\end{defn}

The use of $V = L$ was to get that normal spaces of character $\leq \aleph_1$ are collectionwise Hausdorff \cite{F}, and variations on that theme.

It was known that locally compact normal non-collectionwise Hausdorff spaces could be constructed from MA$_{\omega_1}$, indeed from the existence of a $Q$-set \cite{T1}, so it was a big surprise when G. Gruenhage and P. Koszmider proved that:

\begin{prop}[{~\cite{GK}}]
MA$_{\omega_1}$ implies locally compact, normal, metacompact spaces are $\aleph_1$-collectionwise Hausdorff and (hence) paracompact.
\end{prop}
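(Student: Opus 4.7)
The plan is to take a closed discrete subspace $D=\{x_\alpha:\alpha<\omega_1\}$ of a locally compact, normal, metacompact space $X$ and separate it by pairwise disjoint open sets. I would begin with Watson's character reduction: using local compactness and normality, shrink each $x_\alpha$ to a compact neighborhood $K_\alpha\ni x_\alpha$ admitting an outer base $\{B^\alpha_\xi:\xi<\omega_1\}$ of size $\le\aleph_1$. It then suffices to find a single $\xi(\alpha)<\omega_1$ for every $\alpha$ so that $\{B^\alpha_{\xi(\alpha)}:\alpha<\omega_1\}$ is pairwise disjoint.

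Next, invoke metacompactness: choose an open cover $\{W_\alpha\}\cup\{X\setminus D\}$ with $x_\alpha\in W_\alpha\subseteq K_\alpha^{\mathrm{int}}$ and pass to a point-finite open refinement $\mathcal V=\{V_\alpha:\alpha<\omega_1\}$ with $x_\alpha\in V_\alpha\subseteq W_{f(\alpha)}$ for some $f$. Point-finiteness of $\mathcal V$ is what will ultimately control overlaps: although $\{\beta:V_\beta\cap V_\alpha\neq\emptyset\}$ need not be countable in general, the local finiteness of $\mathcal V$ at each point blocks the kinds of uncountable clusters of mutual interference that would destroy ccc-ness below.

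Now I would define a poset $\mathbb{P}$ of finite approximations to a separating selector: a condition is a finite partial function $p\colon F_p\to\omega_1$ with $F_p\in[\omega_1]^{<\omega}$ such that the family $\{B^\alpha_{p(\alpha)}:\alpha\in F_p\}$ is pairwise disjoint and each $B^\alpha_{p(\alpha)}\subseteq V_\alpha$ (the latter is always achievable by refining the outer base, since $V_\alpha$ is an open neighborhood of $K_\alpha$ after a preliminary shrink). The dense sets $D_\alpha=\{p:\alpha\in F_p\}$ number $\aleph_1$, so once $\mathbb{P}$ is shown ccc, MA$_{\omega_1}$ produces a filter meeting all $D_\alpha$ and hence a separating selector.

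The hard step, and the main obstacle, is establishing the ccc of $\mathbb{P}$. Given $\aleph_1$ conditions, one applies the $\Delta$-system lemma and homogenizes the values on the root, reducing to the case of an uncountable family of pairwise disjoint-domain conditions agreeing on the root. Incompatibility of two such conditions must come from some $B^\alpha_{p(\alpha)}\cap B^\beta_{q(\beta)}\neq\emptyset$ with $\alpha,\beta$ outside the root. The strategy is to argue that if no two conditions in the family were compatible, one could extract from the witnesses an uncountable subfamily producing a closed discrete set whose members cannot be separated in $X$, violating normality — the point-finiteness of $\mathcal V$ is used precisely here, because it ensures that the required closed discrete set can be built from the \emph{centers} of the overlapping basic sets (rather than collapsing into some compact cluster where normality would trivially separate). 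This is the heart of the Gruenhage--Koszmider argument and is where metacompactness, normality, and the outer-base structure interact most delicately; I would expect to spend the bulk of the proof verifying that the homogenization can be made strong enough to force this contradiction for some pair of conditions in the antichain. Once ccc is in hand, MA$_{\omega_1}$ finishes the separation, giving $\aleph_1$-collectionwise Hausdorffness, and paracompactness follows from the folklore fact that locally compact, normal, metacompact, collectionwise Hausdorff spaces are paracompact.
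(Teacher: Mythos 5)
This proposition is quoted from Gruenhage--Koszmider \cite{GK}; the paper gives no proof of it, so your proposal can only be judged on its own merits, and as it stands it has a genuine gap at its central step. The gap is the ccc of your poset $\mathbb{P}$, which is the entire content of the theorem. Your sketch of that step argues that an uncountable antichain would yield ``a closed discrete set whose members cannot be separated in $X$, violating normality.'' But an unseparated closed discrete set does not violate normality --- separating closed discrete sets is collectionwise Hausdorffness, i.e.\ exactly the conclusion being proved, not a consequence of normality. Indeed, this very paper points out that MA$_{\omega_1}$ (via a $Q$-set) yields locally compact normal spaces that are \emph{not} collectionwise Hausdorff; for such a space the poset of finite partial separations cannot be ccc. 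So metacompactness must enter the ccc proof in a concrete, load-bearing way, and the appeal to point-finiteness ``blocking uncountable clusters of mutual interference'' is a placeholder rather than an argument. A correct version has to convert an uncountable antichain into two disjoint \emph{closed} sets that normality cannot separate (or some other genuine contradiction), and making that work is precisely the delicate combinatorial heart of \cite{GK}; it is not recovered by a $\Delta$-system plus homogenization alone.

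There is also a secondary problem in the setup: you require both $K_\alpha\subseteq B^\alpha_{p(\alpha)}$ (outer base) and $B^\alpha_{p(\alpha)}\subseteq V_\alpha$, which forces $K_\alpha\subseteq V_\alpha$; but $V_\alpha$ was only chosen to contain $x_\alpha$ and to lie inside some $W_{f(\alpha)}$, not to contain $K_\alpha$. The ``preliminary shrink'' you invoke replaces $K_\alpha$ by a smaller compact neighborhood of $x_\alpha$ inside $V_\alpha$, but a compact subset of a set with an outer base of size $\le\aleph_1$ need not itself have a small outer base, so Watson's character reduction does not survive this shrink for free. The final step (locally compact, normal, metacompact, $\aleph_1$-collectionwise Hausdorff implies paracompact) is indeed standard and is what the ``(hence)'' in the statement refers to, so I have no objection there.
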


The next result involving iteration axioms and a positive ``normal implies collectionwise Hausdorff" type of result was:

\begin{prop}[{~\cite{LTo}}]
Let $S$ be a coherent Souslin tree (obtainable from $\diamondsuit$ or a Cohen real).  Force MA$_{\omega_1}(S)$, i.e. MA$_{\omega_1}$ for countable chain condition posets preserving $S$.  Then force with $S$.  In the resulting model, there are no first countable $L$-spaces, no compact first countable $S$-spaces, and separable normal first countable spaces are collectionwise Hausdorff.
\end{prop}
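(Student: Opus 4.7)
My plan for all three statements is to use the standard PFA(S)[S]-style reflection argument: any purported counterexample in the final extension $W = V[H][G]$ (where $H$ is $\mathrm{MA}_{\omega_1}(S)$-generic over $V$ and $G$ is $S$-generic over $V[H]$) has a nice $S$-name in $V[H]$, and the tree structure of $S$, together with coherence, produces a c.c.c.\ poset $\mathbb{P} \in V[H]$ preserving $S$ whose generic would exhibit the counterexample already in $V[H]$. Applying $\mathrm{MA}_{\omega_1}(S)$ to $\mathbb{P}$ then yields a witness in $V[H]$, which upon forcing with $S$ lifts to $W$ and contradicts the choice of counterexample.

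For the absence of first countable $L$-spaces and compact first countable $S$-spaces, I would follow Todor\v{c}evi\'c: given such a space $X$ in $W$, use first countability and $|X| \le 2^{\aleph_0}$ to represent $X$ by $S$-indexed names in $V[H]$, and build $\mathbb{P}$ to introduce an uncountable substructure (a discrete subspace or a right-separated sequence) contradicting the defining hereditary properties of $X$. Hereditary Lindel\"ofness (respectively, compactness together with hereditary separability) localizes incompatibility among finite conditions so that $\mathbb{P}$ is c.c.c., and the coherence of $S$ supplies the amalgamation needed for $S$-preservation.

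The main claim---separable, normal, first countable $\Rightarrow$ collectionwise Hausdorff---is where the work concentrates. Suppose in $W$ there is such an $X$ with countable dense $D \subseteq X$, which we may take to lie in $V[H]$, and a closed discrete $Y = \{y_\alpha : \alpha < \omega_1\}$ not separated by disjoint opens. First countability lets us choose in $V[H]$ an $S$-name $\dot{\mathcal{B}}$ for decreasing neighborhood bases $\{U_\alpha^n : n < \omega\}$ of each $y_\alpha$; crucially, the traces $U_\alpha^n \cap D$ are subsets of the countable set $D$. Define a poset $\mathbb{Q} \in V[H]$ whose conditions are finite functions $p : F \to \omega$, $F \subseteq \omega_1$ finite, such that some node of $S$ forces the traces $\{U_\alpha^{p(\alpha)} \cap D : \alpha \in F\}$ to be pairwise disjoint; a sufficiently generic filter over $\mathbb{Q}$ separates $Y$. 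The main obstacle is verifying that $\mathbb{Q}$ is c.c.c.\ and preserves $S$: a $\Delta$-system reduction of any purported antichain leads to incompatibilities on a common root, and these must be resolved via normality of $X$ in $W$ applied to shrink neighborhoods around the root points. Translating that shrinking back to $V[H]$ requires the coherent structure of $S$, which furnishes the canonical order-isomorphisms between cones above distinct nodes at the same level needed to amalgamate conditions. $\mathrm{MA}_{\omega_1}(S)$ then delivers the separating filter in $V[H]$, contradicting the failure of separation assumed in $W$.
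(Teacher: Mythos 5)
This proposition is quoted by the paper from \cite{LTo} without proof, so the relevant comparison is with the Larson--Todorcevic argument as the paper itself summarizes it in the discussion following Lemma \ref{lem15}. Your sketch of the first two clauses is broadly in the right spirit: the non-existence of first countable $L$-spaces and of compact first countable $S$-spaces are Szentmikl\'ossy-style consequences of MA$_{\omega_1}$, obtained here by applying MA$_{\omega_1}(S)$ to $S$-preserving ccc posets built from $S$-names, and the ingredients you gesture at (coherence for amalgamation, the hereditary properties for the ccc verification) are the right ones, though nothing is actually verified.

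The third clause is where your strategy is wrong rather than merely incomplete. You propose to separate the unseparated closed discrete set $Y$ by applying MA$_{\omega_1}(S)$ to a finite-condition poset $\mathbb{Q}$ of neighborhood assignments with pairwise disjoint traces on the countable dense set $D$. Such separation posets are not ccc for general normal first countable spaces --- this is precisely why Gruenhage and Koszmider \cite{GK} needed metacompactness --- and no $\Delta$-system argument plus normality can repair this, because MA$_{\omega_1}$ is \emph{consistent with} the existence of separable, normal, first countable, non-collectionwise Hausdorff spaces (the tangent disk space over a $Q$-set; the paper recalls exactly this when citing \cite{T1}). Hence any argument that manufactures the separation from a forcing axiom holding in $V[H]$ and then ``lifts'' it to $V[H][G]$ must fail: the collectionwise Hausdorffness in this model is created by the $S$-forcing step itself, which destroys the $Q$-sets. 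The actual mechanism is a proof by contradiction, not a generic construction: assuming $Y$ is unseparated in $V[H][G]$, the generic branch of $S$ induces a generic $2$-coloring of $Y$; normality would force this coloring to be ``normalized'' by disjoint open sets; and separability plus first countability (every relevant open set is coded by its trace on $D$) allow a counting and elementarity argument along the levels of $S$ showing that some node of $S$ in the ground model already decides how to normalize the generic partition, contradicting the Souslinity of $S$. This is the blend of the Cohen-subsets-of-$\omega_1$ and $\diamondsuit$-for-stationary-systems techniques that the paper describes after Lemma \ref{lem15}; your proposal contains none of it.
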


The first two statements are consequences of MA$_{\omega_1}$ \cite{Sz}; the last of $V=L$, indeed of $2^{\aleph_0} < 2^{\aleph_1}$.  Larson and Todorcevic used this combination to solve \emph{Kat\v{e}tov's problem}.  This idea of combining consequences of a iteration axiom with ``normal implies collectionwise Hausdorff" consequences of $V = L$ was exploited in \cite{LT1} in order to prove the consistency, modulo a supercompact cardinal, of \emph{every locally compact perfectly normal space is paracompact}.  The large cardinal was later removed, so that:

\begin{thm}[{~\cite{DT2}}]
If ZFC is consistent, then so is ZFC plus every locally compact perfectly normal space is paracompact.
\end{thm}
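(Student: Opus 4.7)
The plan is a two-step forcing construction. Over a ground model carrying a coherent Souslin tree $S$ (obtainable from $\diamondsuit$, hence consistent with ZFC), first iterate, with countable support and length $\omega_2$, proper posets that preserve $S$, thereby forcing a strong form of Martin's axiom for posets preserving $S$ (call this axiom PFA(S)). Then force with $S$ itself. The aim is a model $V[G][H]$ in which two complementary principles hold. On the PFA(S)[S]-side, Balogh-style arguments adapted to this setting should give that every locally compact perfectly normal space has countably tight one-point compactification and so contains no perfect preimage of $\omega_1$, while enough MA$_{\omega_1}$-type consequences survive to supply Gruenhage's Moving Off Property on compact subsets. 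On the $V=L$-side, the iteration and the $S$-step must be shown to preserve $2^{\aleph_0} < 2^{\aleph_1}$ and the related $\omega_1$-combinatorics, so that a Fleissner-type theorem still yields: every normal space of character $\leq \aleph_1$ is collectionwise Hausdorff.

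Given a locally compact perfectly normal $X$ in $V[G][H]$, the argument then proceeds as follows. Perfect normality makes each compact $K \subseteq X$ a $G_\delta$, hence of countable pseudocharacter; combined with local compactness and the countable tightness above, one argues that the outer base of each such $K$ can be bounded by $\aleph_1$. Watson's character reduction then reduces separating a closed discrete subspace of cardinality $\aleph_1$ to separating $\aleph_1$-many compact sets with outer bases of size $\leq \aleph_1$, and the Fleissner-type consequence delivers collectionwise Hausdorffness. Paracompactness of $X$ then follows by Gruenhage's characterization of paracompactness in locally compact spaces via the Moving Off Property, together with CWH.

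The principal obstacle is the coexistence of the two halves. Each is routine in isolation, but a PFA(S)-strength iteration tends to destroy the $\omega_1$-combinatorics underpinning Fleissner-type CWH theorems, while iterations cautious enough to preserve those may be too weak to drive the Balogh- and Gruenhage-style arguments. Coherence of $S$ is crucial: it allows $S$ to be preserved through many natural iterations, and it provides a tight correspondence between topological objects in $V[G]$ and their interpretations in $V[G][H]$, so that PFA(S)-style arguments run in $V[G]$ can be transferred to conclusions in $V[G][H]$. Verifying that the final $S$-step preserves both the Moving Off Property and the normality-plus-small-character-implies-CWH transfer, while not reintroducing any embedded copy of $\omega_1$ into a locally compact perfectly normal $X$, is expected to be the technical heart of the proof, and is presumably where the removal of the supercompact cardinal in \cite{DT2} required the most work.
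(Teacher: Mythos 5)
This theorem is quoted from \cite{DT2}; the present paper does not reprove it, but its introductory discussion records exactly how the argument runs, and your sketch diverges from that in ways that are not repairable as written. The most serious problem is the collectionwise Hausdorff half. You propose to get it from a ``Fleissner-type theorem: normal spaces of character $\leq\aleph_1$ are collectionwise Hausdorff,'' justified by preserving $2^{\aleph_0}<2^{\aleph_1}$, and accordingly you only bound the outer bases of the compact $G_\delta$'s by $\aleph_1$. That is both too weak and, in these models, false: $2^{\aleph_0}<2^{\aleph_1}$ yields only the \emph{separable} case, and Section 6 of this paper shows that MA$_{\omega_1}(S)[S]$ produces a normal, non-$\aleph_1$-collectionwise Hausdorff space of character $\aleph_1$, so no character-$\aleph_1$ separation theorem can hold after the $S$-step. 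What actually works is that a compact $G_\delta$ in a locally compact space has a \emph{countable} outer base, so perfect normality plus Watson's reduction lands you in the first countable case, where $\aleph_1$-collectionwise Hausdorffness comes from the Souslin-tree forcing itself (Lemma \ref{lem15}), not from cardinal arithmetic; full collectionwise Hausdorffness then needs the front-loading (ground model $L$, or an Easton extension adding Cohen subsets of each regular $\kappa$) described in the introduction.

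Two further steps fail. Your closing move, ``paracompactness follows from the Moving Off Property together with CWH,'' has the implication backwards: the MOP characterizes Baireness of $C_k(X)$, and locally compact paracompact spaces \emph{have} the MOP (Lemma \ref{lem6}), but the converse is false, so MOP cannot be the engine that produces paracompactness. The actual final step is a Balogh-style covering/reflection argument showing that a locally compact, normal, collectionwise Hausdorff space with no perfect pre-image of $\omega_1$ is paracompact, where the exclusion of perfect pre-images uses $\mathbf{PPI}$ together with first countability and the fact that $\omega_1$ is not perfectly normal. Finally, the hypothesis ``If ZFC is consistent'' is precisely what your construction does not deliver: forcing full PFA$(S)$ by a length-$\omega_2$ countable support iteration requires a supercompact cardinal (a Laver diamond) to anticipate all proper $S$-preserving posets, and an iteration of that length without one simply does not give PFA$(S)$. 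The content of \cite{DT2} is to dispense with the large cardinal by forcing only the specific fragments needed ($\mathbf{PPI}$, a version of the P-ideal dichotomy, $\bsigma$, MA$_{\omega_1}(S)$) via Todorcevic's elementary-submodel method, as discussed in Section 5 here; you flag this as the ``technical heart'' but your proposal is circular on exactly this point.
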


In the models of \cite{LT1} and \cite{DT2}, every first countable
normal space is collectionwise Hausdorff.  This is achieved in two
stages.  The novel one is: 

\begin{lem}[{~\cite{LT1}}]\label{lem15}
Force with a Souslin tree.  Then\label{LT1}
 normal first countable spaces are
$\aleph_1$-collectionwise Hausdorff. 
\end{lem}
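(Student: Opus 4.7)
The plan is to pull the problem back to the ground model $V$ and exploit two features of the Souslin tree $S$: its countable chain condition (giving countable supports for $S$-names) and its countable distributivity (giving neighborhood bases indexable by $\omega$ in $V$). In $V[G]$, let $X$ be first countable and normal, with closed discrete $D = \{y_\alpha : \alpha < \omega_1\}$. In $V$, fix $S$-names $\dot X$, $\dot D$, $\{\dot y_\alpha\}$; since $S$ adds no new $\omega$-sequences, we may also fix in $V$ a countable sequence $\langle \dot U_\alpha^n : n < \omega\rangle$ of $S$-names forced to be a decreasing neighborhood base at each $\dot y_\alpha$. Normality in $V[G]$ yields, for each $\alpha$, an open $W_\alpha \ni y_\alpha$ disjoint from the closed set $D \setminus \{y_\alpha\}$, and by first countability we may take $W_\alpha = \dot U_\alpha^{\dot n(\alpha)}[G]$ for some $S$-name $\dot n : \omega_1 \to \omega$.

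The value $\dot n(\alpha)$ is decided by a countable antichain $A_\alpha \subseteq S$, by c.c.c. Working in an elementary submodel $M \prec H(\theta)$ with $|M| = \aleph_1$, $\omega_1 \subseteq M$, and containing $S$ and the relevant names, a $\Delta$-system / pressing-down argument applied to $\langle A_\alpha : \alpha < \omega_1\rangle$ produces a stationary $E \subseteq \omega_1$, a root $R \subseteq S$, and an $n^* < \omega$ such that $A_\alpha \cap A_\beta = R$ for $\alpha \ne \beta$ in $E$ and every condition in $A_\alpha \setminus R$ forces $\dot n(\alpha) = n^*$.

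The heart of the argument is then to use the tree structure of $S$, together with the homogenization just produced, to refine $E$ to an uncountable $E^*$ on which $\{W_\alpha : \alpha \in E^*\}$ is pairwise disjoint. One exploits that the conditions in $A_\alpha \setminus R$ and $A_\beta \setminus R$ can be arranged to be pairwise incompatible in $S$ for distinct $\alpha, \beta \in E^*$: iterating normality in $V[G]$ and shrinking the names $\dot U_\alpha^{n^*}$ along the cones below those conditions, one uses the absence of uncountable antichains to force the refinement to converge uniformly in $\alpha$, yielding a genuinely pairwise disjoint family. A final bookkeeping step --- using normality to shrink on the closed discrete complement $D \setminus \{y_\alpha : \alpha \in E^*\}$ and absorbing it into the global separation --- extends the separation on $E^*$ to all of $D$.

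The main obstacle will be this last step. Normality by itself is a pointwise property, and passing from pointwise separations to a pairwise disjoint family requires the Souslin tree to supply coherence that is not present in $V[G]$ a priori. The names $\dot U_\alpha^n$ may depend on $\alpha$ in an arbitrary way, and only after the $\Delta$-system homogenization and the subsequent tree-theoretic refinement does one obtain the uniformity needed to close the argument. A subsidiary subtlety is that one must verify that Souslin-tree forcing preserves enough combinatorial structure from $V$ --- stationary sets, and the pressing-down behavior used above --- for the reflection step to apply in the required form.
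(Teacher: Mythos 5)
Your proposal goes in the wrong direction. The paper does not reprove this lemma (it is quoted from \cite{LT1}), but it describes the argument precisely: one argues \emph{by contradiction}, assuming the closed discrete set $D$ is unseparated, and uses the generic branch of $S$ to induce a \emph{generic partition} of $D$ (the colour of $\xi$ is determined by which node of a fixed level $S_{\xi^+(C)}$ the branch passes through); normality must then ``normalize'' this partition, i.e.\ produce disjoint open sets (indeed a discrete open expansion) around its pieces, and one derives a contradiction at $\delta=M\cap\omega_1$ for a suitable countable elementary submodel $M$, because a single node $\bar s$ at level $\delta^+(C)$ has already decided all the relevant facts below $\delta$ while its distinct immediate successors force $\delta$ into distinct pieces of the partition. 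The proof of Lemma \ref{lem48} in Section 4 runs exactly this machine and is the template to compare against. Your proposal instead attempts a \emph{direct construction} of a separation from a $\Delta$-system homogenization of the antichains deciding $\dot n(\alpha)$. This cannot succeed as stated: the statement is not a ZFC theorem (under MA$_{\omega_1}$ there are normal first countable non-collectionwise-Hausdorff spaces, e.g.\ from a $Q$-set), and the only leverage that forcing with $S$ supplies is the genericity of the branch, which your argument never exploits --- the ccc, $\omega$-distributivity, and $\Delta$-system uniformization you invoke are available for many ccc forcings that do not yield the conclusion.

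Two concrete gaps. First, the ``heart of the argument'' --- refining $E$ to an uncountable $E^*$ on which the $W_\alpha$ are pairwise disjoint by ``iterating normality and shrinking along cones'' --- is not an argument: incompatibility in $S$ of the conditions in $A_\alpha\setminus R$ and $A_\beta\setminus R$ says nothing about disjointness of the open sets those conditions force $\dot U^{n^*}_\alpha$ and $\dot U^{n^*}_\beta$ to be, and no mechanism is supplied that would produce such disjointness. Second, even granting a separated uncountable $E^*\subseteq D$, the ``final bookkeeping step'' extending this to a separation of all of $D$ is false as a general principle: a closed discrete set with a separated uncountable subset need not itself be separated, and normality only lets one absorb a countable (or $\sigma$-separated) remainder, whereas $D\setminus\{y_\alpha:\alpha\in E^*\}$ may again be an unseparated set of size $\aleph_1$. (A minor further issue: the $\Delta$-system lemma for $\aleph_1$ many countable sets requires CH, which fails in the intended models; but this is secondary to the structural problems above.)
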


This is obtained by showing that if a normal first countable space is not $\aleph_1$-collectionwise Hausdorff, a generic branch of the Souslin tree induces a generic partition of the unseparated closed discrete subspace which cannot be ``normalized", i.e. there do not exist disjoint open sets about the two halves of the partition.  The argument is a blend of the two usual methods of proving ``normal implies $\aleph_1$-collectionwise Hausdorff" results, namely those of adjoining Cohen subsets of $\omega_1$ by countably closed forcing \cite{T1}, \cite{T2} and using \emph{$\diamondsuit$ for stationary systems on $\omega_1$}, a strengthening of $\diamondsuit$ that holds in $L$ \cite{F}.  It is noteworthy that:

\begin{prop}[{~\cite{T1}, \cite{F}}]
Either force to add $\aleph_2$ Cohen subsets of $\omega_1$, or assume $\diamondsuit$ for stationary subsets of $\omega_1$.  Then normal spaces of character $\leq \aleph_1$ are $\aleph_1$-collectionwise Hausdorff.
\end{prop}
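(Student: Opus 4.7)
The plan is to handle both hypotheses in parallel. Let $X$ be a normal space of character $\leq \aleph_1$, let $D = \{d_\alpha : \alpha < \omega_1\}$ be a closed discrete subspace of $X$, and fix a decreasing local base $\{V_\alpha^\beta : \beta < \omega_1\}$ at each $d_\alpha$. Assuming toward contradiction that $D$ admits no separation by pairwise disjoint open sets, I would derive a contradiction with normality by producing a closed set $E \subseteq X \setminus D$ that cannot be separated from $D$ by disjoint open neighbourhoods.

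Under the $\diamondsuit$-for-stationary-subsets hypothesis (Fleissner's side), I would attempt to recursively choose $f \colon \omega_1 \to \omega_1$ so that $\{V_\alpha^{f(\alpha)} : \alpha < \omega_1\}$ is pairwise disjoint. At each stage $\alpha$, normality applied to the closed discrete set $\{d_\beta : \beta \le \alpha\}$, together with the closure of the union of previously chosen neighbourhoods, yields a candidate $f(\alpha)$. By our assumption this recursion must fail on a stationary set $S \subseteq \omega_1$; on $S$, the failures single out, for each $\alpha$, a family of potential obstructions meeting every $V_\alpha^\beta$. One then applies $\diamondsuit$ along $S$ to a bookkeeping sequence recording these obstructions, obtaining a stationary $S' \subseteq S$ on which the guessed data cohere into a single closed set $E \subseteq X \setminus D$. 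The pair $(D,E)$ then violates normality.

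Under the Cohen hypothesis (Tall's side), let $\mathbb{P} = \op{Fn}(\omega_2, 2, \omega_1)$ be the poset adding $\aleph_2$ Cohen subsets of $\omega_1$. Since $\mathbb{P}$ is countably closed it adds no countable sequences and preserves character $\leq \aleph_1$; since $\mathbb{P}$ has the $\aleph_2$-chain condition, a $\Delta$-system / reflection argument isolates some $A \in [\omega_2]^{\omega_1}$ such that the entire neighbourhood-base data for $D$ lies in the intermediate extension $V[G \upharpoonright A]$. Choose $\xi \in \omega_2 \setminus A$, let $C_\xi \subseteq \omega_1$ be the Cohen subset added at coordinate $\xi$, and use $C_\xi$ to define an index function $f \colon \omega_1 \to \omega_1$ into the local bases. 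A density argument over $V[G \upharpoonright A]$ shows that if $f$ failed to separate, there would be a condition forcing a topological conflict between some $V_\alpha^{f(\alpha)}$ and $V_\gamma^{f(\gamma)}$; but any such conflict contradicts normality applied to the disjoint closed sets $\{d_\alpha : \alpha \in C_\xi\}$ and $\{d_\alpha : \alpha \notin C_\xi\}$, both closed because $D$ is closed discrete.

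The main obstacle in both arguments is bookkeeping: one must convert the assumed topological failure into a single combinatorial witness on $\omega_1$. In the $\diamondsuit$ case the hard part is arranging the recursion so that obstructions recorded at different $\alpha \in S$ point to a \emph{common} closed set, which is precisely why one needs $\diamondsuit$ on the specific stationary sets thrown up by the construction rather than on arbitrary subsets of $\omega_1$. In the Cohen case the subtle point is that $C_\xi$ is a priori unrelated to $X$, so its usefulness hinges on the reflection step that places all relevant topology into $V[G \upharpoonright A]$; once this is done, generic density over $V[G \upharpoonright A]$ does the rest. In either route, once the reduction to combinatorics on $\omega_1$ is in place, normality of $X$ supplies the final contradiction.
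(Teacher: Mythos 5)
This proposition is quoted by the paper from \cite{T1} and \cite{F}; the paper supplies no proof of its own, so the only comparison available is with the standard arguments, whose shape the paper itself describes just before the statement: one shows that a partition of the unseparated closed discrete set $D$ into two halves --- generic in the Cohen case, guessed by $\diamondsuit$ in the other --- cannot be ``normalized,'' i.e.\ normality, which \emph{must} provide disjoint open sets about the two halves of every such partition, is leveraged (via genericity, resp.\ the guessing) into a full pairwise separation. Measured against that, your $\diamondsuit$ half has a genuine gap: you aim to manufacture a closed set $E \subseteq X \setminus D$ that cannot be separated from $D$, but you give no mechanism by which the ``obstructions'' recorded at stages $\alpha \in S$ assemble into a set that is closed, disjoint from $D$, and provably inseparable from it --- and this is not the target of Fleissner's argument, which instead diagonalizes to build an unseparable two-piece partition \emph{of $D$ itself} (and needs $\diamondsuit$ for stationary \emph{systems}, since the stationary sets on which one guesses are produced during the recursion, not fixed in advance). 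The phrase ``the guessed data cohere into a single closed set $E$'' is exactly the missing idea, not a step you can defer.

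The Cohen half is closer to the mark but has the logic inverted and one overstatement. The index function $f$ is not ``defined from $C_\xi$'': it is obtained by applying normality to the two closed sets $\{d_\alpha : \alpha \in C_\xi\}$ and $\{d_\alpha : \alpha \notin C_\xi\}$ and refining the resulting disjoint open sets to basic neighbourhoods; the density argument over $V[G \upharpoonright A]$ then upgrades ``$V_\alpha^{f(\alpha)}$ and $V_\gamma^{f(\gamma)}$ are disjoint whenever $\alpha, \gamma$ lie on opposite sides of the generic partition'' to pairwise disjointness on an uncountable (and ultimately the whole) set, via a $\Delta$-system argument on conditions deciding $f$ together with the fact that any two indices can generically be placed on opposite sides. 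As you have written it, normality is invoked to refute a ``conflict'' after the fact, which is circular. Also, it is not the ``entire neighbourhood-base data'' that reflects into $V[G \upharpoonright A]$ --- the basic open sets are subsets of $X$ and need not live in any intermediate model --- but only the disjointness relation $\{(\alpha,\beta,i,j) : V_\alpha^i \cap V_\beta^j = \emptyset\}$, a subset of $\omega_1^4$, which is what the $\aleph_2$-c.c.\ captures and what the density argument actually consults.
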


Once one has normal first countable spaces are $\aleph_1$-collectionwise Hausdorff, it is easy to obtain full collectionwise Hausdorffness by starting with $L$ as the ground model and following \cite{F}.  However, if a supercompact cardinal is involved, instead of $L$ we need to follow the method of \cite{LT1}, based on \cite{T2}.  Namely, first make the supercompact indestructible under countably closed forcing \cite{L} and then perform an Easton extension, adding $\kappa^+$ Cohen subsets of each regular $\kappa$, before forcing with the Souslin tree.

In order to extend the theorems about locally compact normal spaces
being paracompact beyond the realm of first countability, one first
needs to get that \textit{locally compact normal spaces are
  collectionwise Hausdorff}.  In \cite{T3}, the second author claimed
to have done so, in the model of \cite{LT1}.  The key was to force to
expand a closed discrete subspace in a locally compact normal space to
a discrete collection of compact sets with countable outer bases and
then apply the methods of \cite{LT1}.  Unfortunately the expansion
argument was flawed.  A corrected argument is presented below, but at
the cost of using a stronger iteration axiom (but not a larger large
cardinal).  

With the conclusion of \cite{T3} restored, \cite{T4}, \cite{LT2}, and
\cite{T} are re-instated.  We shall then proceed to improve the
results of the two latter ones.

\section{PFA$(S)[S]$ and the role of $\omega_1$}

\begin{defn}
	\emph{PFA$(S)$} is the Proper Forcing Axiom (PFA) restricted to those posets that preserve the (Souslinity of the) coherent Souslin tree $S$.\textup{ For the definition of coherence, see e.g. \cite[Chapter 5]{To4}. For a proof that $\diamondsuit$ implies the existence of a coherent Souslin tree, see \cite{Lar3}.}
\end{defn}

\emph{PFA$(S)[S]$ implies $\varphi$} is shorthand for \emph{whenever one forces with a coherent Souslin tree $S$ over a model of PFA$(S)$, $\varphi$ holds.} \emph{$\varphi$ holds in a model of form PFA$(S)[S]$} is shorthand for \emph{there is a coherent Souslin tree $S$ and a model of PFA$(S)$ such that when one forces with $S$ over that model, $\varphi$ holds.}
\newline

For discussion of PFA$(S)[S]$, see \cite{D2}, \cite{To}, \cite{LT1}, \cite{LT2}, \cite{T4}, \cite{T}, \cite{FTT}, \cite{T6}.

The following results appear in \cite{LT2} and \cite{T}, respectively.

\begin{thm}\label{thm:paracompactcopy}
There is a model of form $\mr{PFA}(S)[S]$ in which a locally compact, hereditarily normal space is hereditarily 
paracompact if and only if it does not include a perfect pre-image of $\w_1$.
\end{thm}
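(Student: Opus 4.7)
The plan is as follows. The ``only if'' direction is immediate: $\omega_1$ is countably compact but not compact, hence not paracompact, and any perfect pre-image of $\omega_1$ inherits this failure, so hereditary paracompactness forbids such a subspace.

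For the converse, since open subspaces of locally compact hereditarily normal spaces again satisfy both hypotheses, and since a locally compact Hausdorff space is hereditarily paracompact if and only if every open subspace is paracompact, the task reduces to showing that, in the model, every locally compact, hereditarily normal space $X$ containing no perfect pre-image of $\omega_1$ is itself paracompact. A standard reformulation of the non-inclusion of a perfect pre-image of $\omega_1$ in a locally compact $X$ is that the one-point compactification $X^{\ast}=X\cup\{\infty\}$ is countably tight at $\infty$, and this will be the form used below.

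I would then combine three ingredients available in a carefully chosen model of PFA$(S)[S]$. First, the ``locally compact normal spaces are collectionwise Hausdorff'' result, as corrected in the present paper, gives that every closed discrete subset of $X$ can be separated by disjoint open sets. Second, countable tightness together with local compactness and hereditary normality should permit a Balogh-style expansion of a closed discrete set into a discrete collection of compact sets each having an outer base of cardinality at most $\aleph_1$; the natural reflection tool here is Axiom~R, which is a PFA$(S)[S]$ consequence. Third, once such expansions are in hand, a Gruenhage--Koszmider/MA$(S)$-type moving-off argument should force $X$ to decompose as a free topological sum of $\sigma$-compact open subspaces, which for locally compact Hausdorff spaces is equivalent to paracompactness.

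The principal obstacle is the second step: producing discrete expansions by compact sets with outer bases of controlled size pushes Watson's character-reduction technique well beyond first countability. This expansion is inherently a reflection argument on elementary submodels, and the interaction between hereditary normality of $X$, local compactness, and the preservation requirements for the coherent Souslin tree is delicate; indeed it is precisely a flaw in this step in \cite{T3} that the present paper sets out to repair. Once the expansion is carried out correctly, the remainder of the argument follows the template already established in \cite{LT2} and \cite{T}.
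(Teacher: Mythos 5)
There is a genuine gap, and it sits exactly where you flag it. You write that ``the principal obstacle is the second step'' --- producing the discrete expansions --- and that ``once the expansion is carried out correctly, the remainder of the argument follows the template.'' But that obstacle \emph{is} the content to be supplied: a proof proposal that defers the one step everything hinges on has not proved the theorem. Worse, the strategy you propose for that step (expand the closed discrete set to a discrete family of compact sets with small outer bases, then separate) is precisely the strategy of \cite{T3} whose expansion argument was flawed, and Section 6 of this paper shows the strategy has intrinsic limits: under MA$_{\omega_1}(S)[S]$ there is a locally compact space of character $\aleph_1$ with a normalized closed discrete set admitting \emph{no} normalized discrete expansion by compact $G_\delta$'s. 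The actual repair (Theorem \ref{thm41} and the argument of Section 4) does not produce such an expansion at all; it assumes no $\sigma$-discrete expansion exists and derives a contradiction using \textbf{NSSAT}, \textbf{SCC}, and the combinatorics of the Souslin tree --- hence the move from PFA$(S)$ to MM$(S)$. ``A carefully chosen model of PFA$(S)[S]$'' with Axiom R alone is not known to suffice.

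Two further points of comparison with what the paper actually does. First, the paper does not reprove Theorem \ref{thm:paracompactcopy}: it is quoted from \cite{LT2}/\cite{T}, and the contribution here is to restore its missing input, namely \textbf{LCN}$(\aleph_1)$, after which the original proof is re-instated. As the proof of Theorem \ref{312} records, the ingredients of that original proof are P-ideal Dichotomy, $\bsigma$, $\aleph_1$-collectionwise Hausdorffness, and \textbf{Axiom R}; neither P-ideal Dichotomy nor $\bsigma$ appears anywhere in your outline, and your third ingredient --- a Gruenhage--Koszmider/moving-off argument --- is not part of this proof (the Moving Off Property enters this paper only in connection with Baireness of $C_k(X)$, and \cite{GK} is a MA$_{\omega_1}$ result about metacompact spaces, a hypothesis you do not have). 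Your ``only if'' direction and the reduction of hereditary paracompactness to paracompactness of open subspaces are fine, but they are the trivial part.
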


\begin{thm}\label{thm:paracompactcountablytight}
There is a model of form $\mr{PFA}(S)[S]$ in which a locally compact normal space is paracompact and countably tight if and 
only if its separable closed subspaces are Lindel\"of and it does not include a perfect pre-image of $\w_1$.
\end{thm}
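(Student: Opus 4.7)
For the forward (``only if'') direction, which is pure ZFC: paracompactness passes to closed subspaces, a separable paracompact Hausdorff space is Lindel\"of, and paracompactness is preserved by perfect preimages. Since $\w_1$ is not paracompact, no paracompact space can contain a closed copy of a perfect preimage of $\w_1$, so both necessary conditions follow immediately.

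For the reverse (``if'') direction I would work in the model of $\mr{PFA}(S)[S]$ produced in the main body of this paper, in which locally compact normal spaces are collectionwise Hausdorff. Let $X$ be locally compact, normal, countably tight, with separable closed subspaces Lindel\"of, and without a perfect preimage of $\w_1$. The plan is to re-run the proof in \cite{T}, inserting the corrected collectionwise Hausdorff conclusion wherever the flawed claim of \cite{T3} was originally invoked.

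The main structural steps are as follows. First, by Balogh \cite{B1}, the hypothesis on perfect preimages of $\w_1$ together with countable tightness is equivalent to the one-point compactification of $X$ being countably tight; combined with the assumption that separable closed subspaces are Lindel\"of, this endows $X$ with a local $\sigma$-compact structure that controls closure under countable unions. Next, apply Watson-style character reduction \cite{W}: to separate an uncountable closed discrete subspace of a locally compact normal space, it suffices to separate a discrete family of compact sets each of outer character bounded by the size of the discrete set. Using the corrected collectionwise Hausdorff result together with normality and countable tightness, expand a given closed discrete subspace to such a discrete collection of compact sets with small outer bases, and then separate them. Finally, invoke the classical characterization: a locally compact space is paracompact iff it is a topological sum of Lindel\"of (equivalently $\sigma$-compact) subspaces. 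The separation obtained, together with the local $\sigma$-compact structure, produces the desired clopen Lindel\"of partition of $X$.

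The main obstacle is precisely the expansion step: realizing an unseparated closed discrete subspace as a discrete collection of compact sets with small outer bases, so that Watson-style character reduction delivers collectionwise Hausdorffness of objects in a form strong enough to imply paracompactness. This is exactly where the argument of \cite{T3} broke down, and it is for this step that the current paper deploys its stronger iteration axiom together with saturation of the nonstationary ideal on $\w_1$ and a strong form of Chang's Conjecture. Once the expansion step is established, the remaining parts of the argument of \cite{T} should go through with only cosmetic changes, yielding Theorem \ref{thm:paracompactcountablytight}.
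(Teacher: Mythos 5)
Your proposal matches the paper's own route: Theorem \ref{thm:paracompactcountablytight} is simply quoted from \cite{T}, and the paper's sole contribution to it is to restore the ``locally compact normal implies ($\aleph_1$-)collectionwise Hausdorff'' step whose original justification in \cite{T3} was flawed --- which is exactly the structure you describe, down to identifying the expansion-to-compact-$G_\delta$'s step as the point where the new axioms (\textbf{NSSAT}, \textbf{SCC}) enter. One small slip in your (routine) forward direction: the fact you need is that perfect (indeed closed) \emph{images} of paracompact spaces are paracompact, so that a paracompact perfect pre-image of $\omega_1$ would make $\omega_1$ itself paracompact; ``paracompactness is preserved by perfect preimages'' points the wrong way.
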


\begin{defn}
	\textbf{\textup{PPI}} is the assertion that every first countable perfect pre-image of $\omega_1$ includes a copy of $\omega_1$.
\end{defn}

\begin{lem}[{ \cite{DT1}}]
	$\mr{PFA}(S)[S]$ implies \textbf{\textup{PPI}}.
\end{lem}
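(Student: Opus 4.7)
The plan is to suppose $f:X\to\omega_1$ is a first countable perfect map and produce a closed subspace of $X$ homeomorphic to $\omega_1$. Since $f$ is perfect and $\omega_1$ is countably compact, $X$ is countably compact, and first countability then yields sequential compactness; each fiber $K_\alpha=f^{-1}(\alpha)$ is compact and first countable, and each bounded preimage $f^{-1}[0,\alpha]$ is compact. I would construct by recursion on $\alpha<\omega_1$ a sequence $\langle x_\alpha:\alpha<\omega_1\rangle$ with $f(x_\alpha)=\alpha$, fixing at each $x_\alpha$ a countable decreasing neighborhood base $\{U_n(x_\alpha):n<\omega\}$. At each limit $\alpha$, sequential compactness of $f^{-1}[0,\alpha]$ together with continuity of $f$ gives a subsequential limit $x_\alpha\in K_\alpha$ of $\langle x_\beta\rangle_{\beta<\alpha}$.

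The key is to arrange that on a club $C\subseteq\omega_1$ the \emph{full} tail $\langle x_\beta\rangle_{\beta<\alpha}$ converges to $x_\alpha$ for every $\alpha\in C$; then $\{x_\alpha:\alpha\in C\}$ is a closed copy of $\omega_1$. Suppose for contradiction that, regardless of choices, the set $E\subseteq\omega_1$ of limits at which the tail fails to converge is stationary. I would work in $V\models\mr{PFA}(S)$ with $S$-names $\dot X,\dot f,\dot E$ and analyze countable elementary submodels $M\prec H(\theta)$ with $\delta_M=M\cap\omega_1$. Coherence of $S$ implies that $S\!\restriction\!\delta_M$ is a countable ground-model tree whose branches pairwise agree mod finite, so the $V[S]$-interpretation of $\dot x_{\delta_M}$ is determined, up to finite discrepancy, by ground-model data read off $M$. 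One then designs a proper, $S$-preserving poset in $V$ whose conditions are finite approximations to a club on which the tail converges, with side conditions recording such $M$ and the associated local bases; applying $\mr{PFA}(S)$ to it and interpreting in $V[S]$ yields a club along which the construction succeeds, contradicting stationarity of $E$.

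The main obstacle is designing this side-condition poset so that it preserves $S$ while encoding convergence, a condition depending on unboundedly many predecessors at each stage. First countability (as opposed to mere countable tightness) should be essential here, since it reduces the convergence requirement at each $x_\alpha$ to a single countable base, exactly the kind of data that a proper side-condition forcing can accommodate. Coherence of $S$ is the other essential ingredient: it allows one to consistently read the generic local bases across the submodels $M$ from ground-model data, without which the recursion could not be stabilized along the desired club.
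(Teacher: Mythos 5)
First, note that the paper does not prove this lemma at all: it is quoted from \cite{DT1}, so there is no internal argument to compare against. Judged on its own terms, your proposal has the right general shape (the BDFN-style plan of recursively choosing $x_\alpha\in f^{-1}(\alpha)$, seeking a club along which initial segments converge, and, failing that, applying the forcing axiom to a proper $S$-preserving poset), but it has a genuine gap: the entire mathematical content of the lemma is the construction of that poset together with the verification that it is proper \emph{and preserves the Souslinity of $S$}, and you explicitly leave this as ``the main obstacle.'' This is not a routine side-condition argument one can wave at. The space $X$ exists only in $V[S]$, so in $V\models\mathrm{PFA}(S)$ one must force about $S$-names $\dot X,\dot f,\dot x_\alpha$; the poset one applies $\mathrm{PFA}(S)$ to must anticipate how each node of $S$ evaluates those names, and $S$-preservation is exactly the delicate point (many natural proper posets kill $S$). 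Without a definition of the conditions, the ordering, and the properness/$S$-preservation proofs, there is no proof here.

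A secondary problem is your appeal to coherence: the claim that $S\restriction\delta_M$ being a coherent countable tree makes ``the $V[S]$-interpretation of $\dot x_{\delta_M}$ determined, up to finite discrepancy, by ground-model data read off $M$'' does not follow. Coherence says that nodes on the same level agree as functions except at finitely many ordinals; it does not say that distinct nodes of $S_{\delta_M}$ decide the name $\dot x_{\delta_M}$ (a point of an abstract space) in ways that are ``close'' in any topological sense, and in general they decide it incompatibly. So the mechanism by which you propose to stabilize the recursion across submodels is not established. To repair the argument you would need to spell out the poset (in \cite{DT1} this is done via a careful analysis of free sequences and of how elements of $S$ decide the relevant names, level by level) and prove the two preservation facts; as written, the proposal is a plausible plan rather than a proof.
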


$\mathbf{PPI}$ was originally proved from PFA in \cite{BDFN}. Using $\mathbf{PPI}$, we are able to weaken ``perfect pre-image" to ``copy" in the improved version of the first theorem, 
but provably cannot in the second theorem.  

\begin{thm}\label{thm:paracompactcopyallmodels}
There is a model of form PFA$(S)[S]$ in which a locally compact, hereditarily normal space is hereditarily paracompact if 
and only if it does not include a copy of $\omega_1$.
\end{thm}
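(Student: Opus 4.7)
The plan is to apply Theorem~\ref{thm:paracompactcopy} in its $\mathrm{PFA}(S)[S]$ model and then strengthen its conclusion by means of $\mathbf{PPI}$, which holds in that same model by the preceding lemma. The ``only if'' direction is trivial, since $\omega_1$ is itself locally compact and normal but not paracompact, so no hereditarily paracompact space can contain a copy of it.

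For the ``if'' direction, let $X$ be locally compact and hereditarily normal with no copy of $\omega_1$. By Theorem~\ref{thm:paracompactcopy}, it suffices to show that $X$ contains no perfect pre-image of $\omega_1$. Suppose toward a contradiction that $f \colon Y \to \omega_1$ is a perfect map with $Y \subseteq X$; the target is to produce a copy of $\omega_1$ inside $X$.

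The key reduction is the topological assertion that, in a locally compact hereditarily normal space, every perfect pre-image of $\omega_1$ must contain a \emph{first countable} perfect pre-image of $\omega_1$. Granting this, $\mathbf{PPI}$ applied to the first countable pre-image yields a copy of $\omega_1$ in $X$, contradicting the hypothesis. The expected construction is transfinite: at stage $\alpha < \omega_1$, a point $y_\alpha$ is chosen in the compact fiber $K_\alpha = f^{-1}(\alpha)$ together with a countable decreasing sequence of $X$-open neighborhoods of $y_\alpha$ with compact closures (these exist by local compactness), with the choices coordinated so that the resulting set $Z = \{y_\alpha : \alpha < \omega_1\}$ is closed in $Y$, has a countable local base at each $y_\alpha$ in its subspace topology via the chosen neighborhoods, and admits a perfect restriction of $f$ onto $\omega_1$. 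Hereditary normality of $X$ is used at each stage to separate the partial initial segment $\{y_\beta : \beta < \alpha\}$ from the remaining fibers $K_\gamma$ ($\gamma > \alpha$), so that accumulations of $Z$ within $Y$ land only in $Z$ itself.

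The main obstacle is executing this coordinated construction: the $y_\alpha$ and its local base must be chosen in concert so that $Z$ is simultaneously first countable as a subspace of $X$ and closed in $Y$. Once both are secured, $f|_Z \colon Z \to \omega_1$ is automatically a perfect surjection, since restrictions of perfect maps to closed subsets remain perfect. The crux is the interaction of local compactness (which supplies nested neighborhoods with compact closures, the raw material for countable local bases in $Z$) with hereditary normality (which supplies the separations preventing stray accumulations and thus securing closedness of $Z$). With $Z$ constructed, $\mathbf{PPI}$ produces a copy of $\omega_1$ inside $Z \subseteq X$, the contradiction that completes the proof.
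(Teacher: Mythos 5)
Your top-level reduction is exactly the paper's: combine Theorem~\ref{thm:paracompactcopy} with $\mathbf{PPI}$, the whole content being the claim (Theorem~\ref{thm34}) that a hereditarily normal perfect pre-image of $\omega_1$ includes a \emph{first countable} perfect pre-image of $\omega_1$. But your proposed proof of that claim has a genuine gap, and in fact cannot be repaired as stated. You build $Z=\{y_\alpha:\alpha<\omega_1\}$ with one point per fiber and ask that $Z$ be closed in $Y$. Since each $\{\alpha\}$ is a $G_\delta$ in $\omega_1$, such a $Z$ would have all points $G_\delta$, and $f|_Z$ would be a perfect \emph{bijection} onto $\omega_1$, i.e.\ a homeomorphism: your construction, if it succeeded, would produce a copy of $\omega_1$ outright, making $\mathbf{PPI}$ superfluous and yielding a ZFC proof of the ``if'' direction. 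That is impossible: under $\diamondsuit$, an Ostaszewski space is a locally compact, perfectly (hence hereditarily) normal perfect pre-image of $\omega_1$ that is hereditarily separable and so contains no copy of $\omega_1$ and admits no closed one-point selector. The step that fails is the limit stage: for limit $\alpha$ the already-chosen segment $\{y_\beta:\beta<\alpha\}$ must (by countable compactness of $f^{-1}([0,\alpha])$) accumulate in the fiber $K_\alpha$, and for $Z$ to be closed it must accumulate \emph{only} at the single point $y_\alpha$. Hereditary normality applied at stage $\alpha$ can separate the initial segment from closed sets disjoint from its closure, but it cannot control where that segment already accumulates; arranging convergence to a single point would require coordinating all earlier choices against all limit fibers simultaneously, and no mechanism in your sketch does this.

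The paper's proof of the reduction is quite different and essentially uses the forcing axiom, which your argument never invokes below the top level. One passes to a closed $Y$ on which the perfect map is \emph{irreducible}, observes that each initial segment $D_\alpha=f'^{-1}([0,\alpha])$ is separable (preimages of dense sets under closed irreducible maps are dense, Lemma following~\ref{lxb}), and then applies the PFA$(S)[S]$ theorem (Lemma~\ref{lxc}) that compact, separable, hereditarily normal spaces are hereditarily Lindel\"of; this makes points $G_\delta$ in the countably compact $D_\alpha$ and hence gives first countability. That last ingredient is provably not a ZFC fact (compact separable hereditarily normal non--hereditarily Lindel\"of spaces exist under $\diamondsuit$), which is a structural signal that any purely topological construction like yours of a first countable pre-image with the properties you need cannot go through.
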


\begin{eg}
There is a locally compact space $X$ (indeed a perfect pre-image of $\omega_1$) which is normal, does not include a copy 
of $\omega_1$, in which all separable closed subspaces are compact, but $X$ is not paracompact.
\end{eg}

It is clear that to establish Theorem \ref{thm:paracompactcopyallmodels}, it suffices to use \ref{thm:paracompactcopy} and apply $\mathbf{PPI}$ after 
proving:

\begin{thm}\label{thm34}
$\mr{PFA}(S)[S]$ implies a hereditarily normal perfect pre-image of $\w_1$ includes a first countable perfect pre-image of 
$\w_1$.
\end{thm}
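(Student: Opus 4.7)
The plan is to produce a closed subspace $Y \subseteq X$ which is first countable and such that, writing $f : X \to \w_1$ for the given perfect map with compact fibers $K_\alpha = f^{-1}(\alpha)$, the restriction $f|_Y : Y \to \w_1$ is still a perfect surjection. The argument naturally falls into three steps, of which the first is the hardest.

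The first step is to extract enough points of countable outer base in $X$. Each fiber $K_\alpha$ is a compact hereditarily normal subspace of $X$, and under $\mr{PFA}(S)[S]$ such spaces are heavily constrained; this is where one invokes consequences of the axiom bearing on the structure of compact hereditarily normal spaces (in the spirit of Todorcevic's analysis of such spaces and a $P$-ideal dichotomy type principle). The goal is: every nonempty relatively open subset of every $K_\alpha$ contains a point of countable character in $K_\alpha$. Watson's character-reduction idea, together with hereditary normality of the ambient space $X$, then upgrades countable character in $K_\alpha$ to a countable outer base in $X$, because each neighborhood of $p$ in $K_\alpha$ can be thickened to a member of an outer base in $X$ via normality applied to $\{p\}$ and a relatively closed complement.

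The second step is a closing-off construction along a continuous $\in$-chain $\langle M_\alpha : \alpha < \w_1 \rangle$ of countable elementary submodels of $H(\theta)$ for $\theta$ sufficiently large, with $X, f \in M_0$ and $M_\alpha \cap \w_1 = \alpha$ on a club $C \subseteq \w_1$. For each $\alpha \in C$, select, within $M_{\alpha+1}$, a point $y_\alpha \in K_\alpha$ with a countable outer base $\mc{B}_\alpha \in M_{\alpha+1}$ in $X$, as provided by Step 1, and set $Y = \ov{\{y_\alpha : \alpha \in C\}}$. That $f|_Y$ is a perfect surjection onto $\w_1$ is then routine from the coherence of the selection: $f(Y) \supseteq C$ is cofinal and closed in $\w_1$, the fibers $Y \cap K_\beta$ are compact because they sit inside closures controlled by $M$-definable outer bases, and $Y$ is closed by construction. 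First countability of $Y$ holds because each $y_\alpha$ has countable outer base in $X$, hence in $Y$; and any new limit point $z \in Y \cap K_\beta$ is the limit of a sequence $\langle y_{\alpha_n} \rangle$ from $\{y_\alpha : \alpha < \beta\}$ (using countable tightness of $Y$, itself a consequence of the structural results invoked in Step 1), so a countable outer base at $z$ in $Y$ is assembled from $\bigcup_n \mc{B}_{\alpha_n}$ together with a countable base at $z$ in the first countable compact space $Y \cap K_\beta$.

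The main obstacle is Step 1. The closing-off construction in Step 2 and the first-countability verification in Step 3 are standard elementary-submodel reflection techniques, but the production of points of countable character inside arbitrary nonempty open subsets of a fiber $K_\alpha$ genuinely requires the forcing axiom; it is where $\mr{PFA}(S)[S]$, rather than ZFC alone, is doing the work. The fact that the axiom controls compact hereditarily normal spaces of size up to $\aleph_1$ in strong ways should make this feasible, but the precise formulation that supplies the needed first countable points, while also being applicable \emph{inside each fiber} and compatible with the hereditary normality of the ambient $X$, is the delicate part of the argument.
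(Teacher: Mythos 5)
There is a genuine gap, and it sits exactly where you yourself locate it: Step 1 is not a known consequence of $\mr{PFA}(S)[S]$ in the form you need, and nothing in your outline supplies it. The structural result the axiom actually provides (Lemma \ref{lxc}, from \cite{To, T}) concerns compact \emph{separable} hereditarily normal spaces: it makes them hereditarily Lindel\"of, hence (being compact) first countable. Without separability you have no leverage on the fibers, and the missing idea is precisely how to manufacture separability. The paper does this by first passing to a closed $Y\subseteq X$ on which the perfect map $f$ is \emph{irreducible}; then each compact clopen initial segment $D_\alpha=(f\restriction Y)^{-1}([0,\alpha])$ maps irreducibly onto the countable space $[0,\alpha]$, and density does not increase under closed irreducible surjections, so $D_\alpha$ is separable. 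Lemma \ref{lxc} now applies directly: $D_\alpha$ is hereditarily Lindel\"of, its points are $G_\delta$, so it is first countable; since the $D_\alpha$ are open and cover $Y$, all of $Y$ is a first countable perfect pre-image of $\omega_1$. No selection of individual points, no elementary submodels, and no appeal to any dichotomy for non-separable compact hereditarily normal spaces is needed.

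A second, independent problem is your Step 3. Even granting countable character of each $y_\alpha$ in $X$ and countable tightness of $Y$, a point $z$ that is the limit of a sequence $\langle y_{\alpha_n}\rangle$ need not have countable character: countable bases at the $y_{\alpha_n}$ together with a countable base for $z$ inside the compact fiber $Y\cap K_\beta$ cannot be ``assembled'' into a neighbourhood base at $z$ in $Y$. (Consider the point at infinity in the one-point compactification of the topological sum of an uncountable discrete space and a convergent sequence: it is the limit of a sequence of isolated points yet has uncountable character.) The paper's route sidesteps this entirely by making each whole clopen piece $D_\alpha$ first countable at once rather than curating a set of good points and taking its closure.
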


This follows from:

\begin{lem}
\label{lxb}
Let $X$ be a perfect pre-image of $\omega_1$, and suppose separable subspaces of $X$ are Lindel\"{o}f. Then $X$ includes a 
first countable perfect pre-image of $\omega_1$.
\end{lem}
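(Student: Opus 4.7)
The plan is to build a closed subspace $Y\subseteq X$ that is first countable and on which $f$ maps onto $\omega_1$; then $f|_Y$ is perfect (as a restriction of a perfect map to a closed subspace) and gives the desired first countable perfect pre-image of $\omega_1$. The key structural observation is that each initial segment $L_\beta:=f^{-1}([0,\beta])$ is \emph{clopen} in $X$ (because $[0,\beta]$ is clopen in $\omega_1$) and compact (by perfectness of $f$), so $X$ is naturally filtered by clopen compacta.

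Fix a sufficiently large regular cardinal $\theta$ and a continuous increasing chain $\langle M_\alpha:\alpha<\omega_1\rangle$ of countable elementary submodels of $H(\theta)$ with $X,f\in M_0$ and $M_\alpha\in M_{\alpha+1}$; let $\delta_\alpha=M_\alpha\cap\omega_1$, which forms a club. I would construct by transfinite recursion an increasing chain of compact \emph{metrizable} (hence first countable) subspaces $K_\alpha\subseteq L_{\delta_\alpha}$ with $f(K_\alpha)=[0,\delta_\alpha]$ and $K_\alpha\in M_{\alpha+1}$, then set $Y=\bigcup_{\alpha<\omega_1}K_\alpha$. At a successor stage $\alpha+1$, extend $K_\alpha$ by attaching a compact metric subspace of the clopen compact piece $L_{\delta_{\alpha+1}}\setminus L_{\delta_\alpha}$ that maps perfectly onto the countable compact ordinal $(\delta_\alpha,\delta_{\alpha+1}]$; clopenness keeps the union metrizable.

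At a limit stage $\alpha$, put $K_\alpha=\overline{\bigcup_{\beta<\alpha}K_\beta}$. This union is a countable union of compact metrizable spaces, hence separable and second countable; its closure lies in $L_{\delta_\alpha}$, which is compact. The hypothesis then applies: every separable subspace of $K_\alpha$ is Lindel\"of. Combined with the elementary-submodel control $K_\alpha\subseteq\overline{X\cap M_\alpha}$, and with a coordinated choice at successor stages (picking the attached metric pieces inside a common countable-weight subspace visible in $M_{\alpha+1}$), one shows that $K_\alpha$ still has countable weight, hence is metrizable.

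Finally, verify that $Y=\bigcup_\alpha K_\alpha$ works. By construction no points below level $\beta$ are added after stage $\beta$, so $Y\cap L_\beta=K_\beta$ is compact; since the $L_\beta$ are clopen and cover $X$, $Y$ is closed in $X$. Surjectivity $f(Y)=\omega_1$ follows from $f(K_\alpha)=[0,\delta_\alpha]$ and the unboundedness of $\{\delta_\alpha\}$. For first countability at $y\in K_\alpha$, the clopen neighborhood $L_{\delta_\alpha}\cap Y=K_\alpha$ together with metrizability of $K_\alpha$ supplies a countable local base.

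The main obstacle is the limit step: forcing $K_\alpha$ to remain first countable (metrizable). The separable-Lindel\"of hypothesis alone does not imply that every compact Hausdorff separable space is first countable (witness the one-point compactification of an uncountable discrete space), so the argument must exploit the specific way $K_\alpha$ is built---as the closure of a carefully coordinated $\sigma$-compact union of metrizable compacta inside an elementary-submodel-controlled compact subspace---together with the hypothesis to pin down the weight of $K_\alpha$.
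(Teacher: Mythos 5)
Your overall architecture (pass to a small closed subspace $Y$, exploit the clopen compact filtration $L_\beta=f^{-1}([0,\beta])$, and establish first countability piece by piece) is workable, but the proof has a genuine gap exactly where you flag it, and it is not repairable in the form you propose. At both the successor and the limit stages you need the compact separable subspaces you produce to be \emph{metrizable} (equivalently, of countable weight), and no reading of the hypothesis can deliver this: the double arrow (split interval) space is compact, separable, and hereditarily Lindel\"of --- so every one of its subspaces, separable or not, is Lindel\"of --- yet it has weight $2^{\aleph_0}$. Thus ``the closure of a coordinated $\sigma$-compact union of metrizable compacta, controlled by elementary submodels'' can perfectly well fail to be metrizable, and no bookkeeping with the $M_\alpha$ will pin down the weight. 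The same objection already defeats the successor step: choosing one point per fibre over $(\delta_\alpha,\delta_{\alpha+1}]$ and closing up yields a compact separable set mapping onto that interval, but nothing in the hypothesis makes it metrizable.

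The repair is to lower the target from ``metrizable'' to ``first countable,'' which is all the conclusion requires and which the hypothesis can reach: a compact separable piece is, by the way the hypothesis is supplied in the intended application (compact, separable, hereditarily normal spaces being hereditarily Lindel\"of), hereditarily Lindel\"of, hence has $G_\delta$ points, hence --- being compact --- is first countable. Once you make that move, the transfinite recursion and the elementary submodels become superfluous, and you arrive at the paper's argument: take a closed $Y\subseteq X$ on which $f$ restricts to an \emph{irreducible} perfect map onto $\omega_1$ (a minimal closed preimage, by Zorn's lemma); irreducibility forces each clopen compact piece $D_\alpha=f'^{-1}([0,\alpha])$ to be separable, because dense sets pull back densely under closed irreducible maps and $[0,\alpha]$ is countable; the hypothesis then makes $D_\alpha$ hereditarily Lindel\"of and so first countable; and since the $D_\alpha$ are open and cover $Y$, all of $Y$ is first countable. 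In short, your construction is a hands-on substitute for the irreducible restriction, and the missing idea is that first countability of the compact pieces should come from separability plus hereditary Lindel\"ofness ($G_\delta$ points), not from countable weight.
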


\noindent and

\begin{lem}[~\cite{To, T}]\label{lxc}
$\mr{PFA}(S)[S]$ implies compact, separable, hereditarily normal spaces are hereditarily Lindel\"{o}f.
\end{lem}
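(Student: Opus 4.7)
The plan is to argue by contradiction, extract the standard obstruction to hereditary Lindel\"ofness, and kill it using the coherent-Souslin-tree machinery of $\mr{PFA}(S)[S]$. Suppose $V \models \mr{PFA}(S)$ and in the $S$-extension $V[G]$ we have a compact, separable, hereditarily normal $X$ together with a right-separated $\omega_1$-subspace $Y = \{y_\alpha : \alpha < \omega_1\}$, i.e.\ for each $\alpha$ the initial segment $Y_\alpha = \{y_\beta : \beta < \alpha\}$ is open in $Y$. The goal is to produce a $V$-level object that can be attacked by $\mr{PFA}(S)$.

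First, I would use separability together with hereditary normality to push the obstruction into $\mc{P}(\omega)$. Fix a countable dense $D \subseteq X$ and identify it with $\omega$. Since $Y_\alpha$ is open in $Y$, choose open $U_\alpha \subseteq X$ with $U_\alpha \cap Y = Y_\alpha$; $T_5$ lets us separate $\ov{Y_\alpha}$ from $\ov{Y \setminus Y_\alpha}\setminus \ov{Y_\alpha}$ and iterate, producing a monotone $\omega_1$-chain $(U_\alpha)_{\alpha<\omega_1}$ whose traces $D_\alpha = U_\alpha \cap D \subseteq \omega$ form a strictly $\subseteq^*$-increasing $\omega_1$-sequence in $\mc{P}(\omega)/\mr{fin}$. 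This transfers the failure of hereditary Lindel\"ofness of $X$ into the existence of an uncountable tower-like structure on $\omega$, the same kind of object Todorcevic's PFA arguments are designed to destroy.

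Second, I would lift the picture to $V$. The sequence $(D_\alpha)$ has an $S$-name, and the coherence of $S$ lets us refine it on a club of countable elementary submodels so that $D_\alpha$ is determined modulo finite by the $\alpha$-th level of $S$. This yields in $V$ a coherent $\omega_1$-sequence in $\mc{P}(\omega)$ of the kind to which the $\mr{OCA}$-style consequences of $\mr{PFA}(S)$ apply. An open-coloring argument on pairs $(\alpha,\beta)$ separated by the gap structure of $(D_\alpha)$ then produces either an uncountable homogeneous set, contradicting the strict $\subseteq^*$-growth, or a decomposition into countably many anti-chains, contradicting $\omega_1$-length; either outcome contradicts the existence of $Y$. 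This is the standard route by which Todorcevic rules out $S$-spaces in compact $T_5$ contexts in the $\mr{PFA}(S)[S]$ setting.

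The main obstacle is the passage between $V[G]$ and $V$: a right-separated subspace of $X$ is a thoroughly topological object in $V[G]$ and its witnesses need not be coded in $V$ a priori. The key leverage is the coherence of $S$, which ensures that the level-$\alpha$ slices of the $S$-name for $(D_\alpha)$ are almost determined in $V$, so that what looks in $V[G]$ like an uncountable tower can be analyzed by $\mr{OCA}/\mr{PFA}(S)$ in $V$. A secondary technical hurdle is ensuring that the iterated $T_5$ separations assemble coherently into the chain $(U_\alpha)$ rather than only pointwise; this typically requires a simultaneous recursion in which compactness of $X$ is invoked at limit stages to keep the closures of $Y_\alpha$ disjoint from the tails.
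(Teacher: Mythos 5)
First, a point of reference: the paper does not prove this lemma at all --- it is quoted from \cite{To} and \cite{T} --- so there is no ``paper proof'' to match. The closest the paper comes is the Section 5 theorem that locally compact, hereditarily normal, separable spaces are hereditarily Lindel\"of, whose proof reveals the actual architecture any argument here must follow: \emph{first} show the space has countable spread (this is where separability, hereditary normality, and the Souslin tree interact, via the generic-partition analogue of the Jones lemma, cf.\ Lemma 4.8 and the \cite{LTo} result that separable normal first countable spaces become collectionwise Hausdorff), and \emph{then} pass from countable spread to hereditary Lindel\"ofness using countable tightness of the compact space together with a Szentmikl\'ossy-type reflection principle ($\bsigma$, or an $\mr{MA}_{\omega_1}(S)$ ccc-forcing argument). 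Your proposal skips the first step entirely and never uses compactness or separability where they are actually needed.

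The decisive gap, however, is in your endgame. A strictly $\subseteq^*$-increasing $\omega_1$-sequence in $\mc{P}(\omega)/\mr{fin}$ is not ``the kind of object Todorcevic's PFA arguments are designed to destroy'': towers of length $\omega_1$ exist outright in ZFC (since $\mf{t}\geq\aleph_1$), and neither PFA, OCA, nor $\mr{PFA}(S)[S]$ rules them out --- they only prevent such towers from being \emph{maximal} or from filling certain gaps. So even if you could manufacture the sequence $(D_\alpha)$, exhibiting it yields no contradiction, and the unspecified ``open-coloring argument'' has nothing to bite on; ``an uncountable homogeneous set contradicting the strict $\subseteq^*$-growth'' is not a coherent dichotomy. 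The reduction itself is also unjustified: for a right-separated sequence only one of the two closure conditions defining separated sets holds ($Y_\alpha\cap\ov{Y\setminus Y_\alpha}$ may well be nonempty computed in $X$), so $T_5$ does not hand you the sets $U_\alpha$, and even granting a monotone chain $(U_\alpha)$ there is no reason the traces on the countable dense set increase \emph{strictly mod finite}. To repair the proof you would need to replace the tower strategy with the two-stage spread-then-reflection argument sketched above.
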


\noindent Here is the proof of Lemma~\ref{lxb}.

\begin{Proof}
Let $f : X\rightarrow\omega_1$, perfect and onto. Then $X$
is locally compact, countably compact, but not compact.  There is a closed $Y \subseteq X$ such that $f' = f|Y$ is 
perfect, irreducible, and maps $Y$ onto $\omega_1$.  So $Y = \bigcup_{\alpha < \omega_1}f'^{-1}(\{\beta : \beta \leq 
\alpha\})$.  Each $D_\alpha = f'^{-1}(\{\beta : \beta \leq \alpha\})$ is clopen and hence countably compact.  It suffices 
to show $D_\alpha$ is hereditarily Lindel\"of, for then points are $G_\delta$ and $D_\alpha$ is first countable.  But then 
$Y$ is first countable, since $D_\alpha$ is open.  To show $D_\alpha$ is hereditarily Lindel\"of, we need only show it is 
separable.  $f_\alpha = f'|D_\alpha$ is irreducible, for if there were a proper closed subset $A$ of $D_\alpha$ such that 
$f'(A) = f'(D_\alpha)$, then $f$ would map $A \cup (Y - D_\alpha)$ onto $\omega_1$, contradicting $f$'s irreducibility.  
But 
\begin{lem}[{~\cite[Section 6.5]{PW}}]
If $f$ is a closed irreducible map of $X$ onto $Y$ and $E$ is dense in $Y$, then $f^{-1}(E)$ is dense in $X$.
\end{lem}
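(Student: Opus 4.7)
The plan is to argue by contradiction in the standard way. Suppose $f^{-1}(E)$ fails to be dense in $X$. Then there is a nonempty open set $U \subseteq X$ with $U \cap f^{-1}(E) = \emptyset$, equivalently $f(U) \cap E = \emptyset$. The goal is to extract a contradiction from the density of $E$ by producing an open subset of $Y$ that is disjoint from $E$.

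The key step is to swap from ``images of open sets'' (which need not be open) to ``complements of images of closed sets'' (which are open when the map is closed). Since $U$ is open and nonempty, $X \setminus U$ is a proper closed subset of $X$. Because $f$ is closed, $f(X \setminus U)$ is closed in $Y$; because $f$ is irreducible and $X \setminus U$ is a proper closed subset of $X$, $f(X \setminus U) \neq Y$. Therefore $V := Y \setminus f(X \setminus U)$ is a nonempty open subset of $Y$.

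Now I would use the density of $E$: pick $y \in E \cap V$. Since $y \notin f(X \setminus U)$, the fiber $f^{-1}(y)$ must lie entirely inside $U$; since $f$ is onto, this fiber is nonempty. Any $x \in f^{-1}(y)$ then satisfies $x \in U$ and $f(x) = y \in E$, i.e.\ $x \in U \cap f^{-1}(E)$, contradicting the choice of $U$.

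There is no real obstacle here; the only point one should double-check is the definition of irreducibility being used (no proper closed subset of $X$ is mapped onto $Y$), which is precisely what licenses the strict inclusion $f(X \setminus U) \subsetneq Y$ and hence the nonemptiness of the open set $V$.
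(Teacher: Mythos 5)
Your proof is correct: the passage from the open set $U$ to the nonempty open set $V = Y \setminus f(X\setminus U)$ via closedness and irreducibility, followed by pulling back a point of $E\cap V$, is exactly the standard argument for this fact. The paper itself offers no proof, merely citing Porter--Woods, so there is nothing to compare against; your argument fills that citation in correctly.
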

Thus $D_\alpha$ is separable.
\end{Proof}

Let us construct the example that constrains the hoped-for improvement of Theorem \ref{thm:paracompactcountablytight}.  
Consider a stationary, co-stationary subset $E$ of $\omega_1$ and its Stone-\v{C}ech extension $\beta E$.  The identity 
map $\iota$ embeds $E$ into the compact space $\omega_1 + 1$.  $\iota$ extends to $\hat{\iota}$ mapping $\beta E$ onto $
\omega_1+1$; we claim that $\hat{\iota}$ maps only one element -- call it $z$ -- of $\beta E$ to the point $\omega_1$.  
The reason is that every real-valued continuous function on $E$ is eventually constant.  If there were another such point, 
say $z'$, let $f$ be a continuous real-valued function sending $z$ to $0$ and $z'$ to $1$.  Let $U, V$ be open sets about 
the point $\omega_1$ such that $\hat{\iota}^{-1}(U) \subseteq f^{-1}\left(\left[0, \frac{1}{2}\right]\right)$ and $\hat
{\iota}^{-1}(V) \subseteq f^{-1}\left(\left(\frac{1}{2}, 1\right]\right)$.  Then $\hat{\iota}^{-1}(U) \cap \hat{\iota}^{-
1}(V) = \emptyset$, but $U \cap V \cap E$ is cocountable in $E$, contradiction.

Our space $X$ will be $\beta E - \set{z}$.  $\hat{\iota}|X$ maps $X$ onto $\omega_1$; we claim this map is perfect.  By 
3.7.16(iii) of Engelking \cite{E}, it suffices to show that $\hat{\iota}[\beta X - X] = \beta \omega_1 - \omega_1$.  But 
$\beta \omega_1 = \omega_1 + 1$ and $\beta X = \beta E$, so this just says $\hat{\iota}(z) = \omega_1$, which we have.

If $H, K$ are disjoint closed subsets of $X$, then their closures in $\beta E$ have at most $z$ in common.  Thus their 
images $\hat{\iota}[H]$ and $\hat{\iota}[K]$ cannot overlap in a subspace with a point of $E$ in its closure. Since $E$ is stationary, their overlap is countable. Then at least one of them is bounded, and hence compact.  it is then easy to pull back disjoint open sets to establish normality.

For any perfect pre-image of $\omega_1$, it is easy to see that separable closed subspaces are compact, since they are 
included in a pre-image of an initial closed segment of $\omega_1$.

It remains to show that $X$ does not include a copy $W$ of $\omega_1$.  A standard $\beta \mathbb{N}$ argument shows that 
no point in $X - E$ is the limit of a convergent sequence, so the set $C$ of all limits of convergent sequences from $W$ 
is a subset of $E$.  But $C$ is homeomorphic to $\omega_1$, so cannot be included in a co-stationary $E$.

There is, however, a satisfactory improvement of Theorem \ref{thm:paracompactcountablytight}:

\begin{thm}\label{thm38}
There is a model of form PFA$(S)[S]$ in which a locally compact, normal, countably tight space is paracompact if and only 
if its separable closed subspaces are Lindel\"of, and it does not include a copy of $\omega_1$.
\end{thm}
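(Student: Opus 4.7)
The ``only if'' direction is standard. For the ``if'' direction, the plan is to work in a PFA$(S)[S]$ model witnessing Theorem~\ref{thm:paracompactcountablytight} and reduce the hypothesis ``no copy of $\omega_1$'' to ``no perfect pre-image of $\omega_1$,'' after which Theorem~\ref{thm:paracompactcountablytight} applies directly. Let $X$ be locally compact, normal, countably tight, with separable closed subspaces Lindel\"of, and suppose $X$ contains no copy of $\omega_1$. For contradiction, let $Y \subseteq X$ be a perfect pre-image of $\omega_1$ via $f\colon Y \to \omega_1$; since $f$ is perfect, $Y$ is closed in $X$ and inherits normality, countable tightness, and the separable-closed-Lindel\"of property. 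Paralleling the deduction of Theorem~\ref{thm:paracompactcopyallmodels} from Theorem~\ref{thm:paracompactcopy}, I would produce a first countable perfect pre-image of $\omega_1$ inside $Y$ and then invoke $\PPI$ to obtain a copy of $\omega_1$ in $X$, the desired contradiction.

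To build such a first countable perfect pre-image I would imitate Lemma~\ref{lxb}: pass to a closed subspace of $Y$ on which $f$ is irreducible, and set $D_\alpha = f^{-1}[0,\alpha]$ for each $\alpha < \omega_1$. Each $D_\alpha$ is clopen in $Y$, compact, closed in $X$, countably tight and normal, and separable (from irreducibility of $f|D_\alpha$ together with separability of $[0,\alpha]$). As in Lemma~\ref{lxb}, first countability of every $D_\alpha$ transfers to first countability of $Y$ via the open cover $Y = \bigcup_{\alpha<\omega_1} D_\alpha$. So the crux is to show each compact, separable, countably tight, normal space $D_\alpha$ is first countable (equivalently, since $D_\alpha$ is compact Hausdorff, has points $G_\delta$, or equivalently is hereditarily Lindel\"of).

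The main obstacle is that the original route---separable compact hereditarily normal implies hereditarily Lindel\"of (Lemma~\ref{lxc}), hence by compactness first countable---uses hereditary normality, which we do not have. I would substitute a PFA$(S)[S]$ consequence in the spirit of Todorcevic's ``no compact $S$-spaces,'' but with countable tightness playing the role that hereditary separability plays in Todorcevic's setting: namely, that in the model every compact, countably tight, separable, normal space is hereditarily Lindel\"of. Countable tightness would be the hypothesis compensating for the loss of hereditary normality. If that statement is not directly available from \cite{To,T}, my fallback is to prove a ``countably tight'' analogue of $\PPI$ in this model---every countably tight perfect pre-image of $\omega_1$ whose separable closed subspaces are Lindel\"of contains a copy of $\omega_1$---bypassing first countability of the $D_\alpha$ altogether; I expect the principal combinatorial work to live here.
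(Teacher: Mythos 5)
Your overall frame---work in the model of Theorem~\ref{thm:paracompactcountablytight} and rule out perfect pre-images of $\omega_1$ using countable tightness---is the paper's strategy, but your primary mechanism for doing so contains a genuine error. The substitute lemma you propose, that every compact, separable, countably tight, normal space is hereditarily Lindel\"of (hence, being compact, first countable), is false outright in ZFC, so no PFA$(S)[S]$-style axiom can supply it. Take the one-point compactification $\Psi^*$ of a Mr\'owka $\Psi$-space over an uncountable almost disjoint family $\mathcal{A}$: it is compact (hence normal), separable (since $\omega$ is dense), and countably tight by Balogh's criterion (quoted later in the paper), because every closed countably compact subspace of $\Psi$ is compact, so $\Psi$ contains no perfect pre-image of $\omega_1$. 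Yet $\Psi^*$ contains the uncountable discrete subspace $\mathcal{A}$, so it is not hereditarily Lindel\"of, and it is not first countable at the point at infinity. Note also that every closed subspace of $\Psi^*$ is compact, so the hypothesis ``separable closed subspaces are Lindel\"of'' gives you no extra leverage on the $D_\alpha$'s. This is exactly why Lemma~\ref{lxc} needs \emph{hereditary} normality; countable tightness is not an adequate replacement, and the whole detour through ``first countable perfect pre-image plus $\PPI$'' is the wrong road for Theorem~\ref{thm38}.

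Your fallback is in fact the paper's proof: the ``countably tight analogue of $\PPI$'' you describe is precisely Theorem~\ref{thmConj2} (PFA$(S)[S]$ implies a countably tight perfect pre-image of $\omega_1$ includes a copy of $\omega_1$), stated in the paper and cited from \cite{DT2}; it does not even require your extra Lindel\"of hypothesis. Given it, the argument is immediate: any perfect pre-image of $\omega_1$ inside a countably tight $X$ is itself countably tight, hence would yield a copy of $\omega_1$; so ``no copy'' implies ``no perfect pre-image,'' and one concludes by (the proof of) Theorem~\ref{thm:paracompactcountablytight}. As written, though, your proposal leaves this---the only step carrying real content---as a speculative alternative rather than the argument, while the route you actually commit to is blocked by the counterexample above.
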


This follows from:

\begin{thm}[{~\cite{DT2}}]\label{thmConj2}
PFA$(S)[S]$ implies a countably tight, perfect pre-image of $\omega_1$ includes a copy of $\omega_1$.
\end{thm}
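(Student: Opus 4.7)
The plan is to reduce the theorem to \textbf{PPI} via Lemma~\ref{lxb}. Let $f : X \to \omega_1$ be a countably tight perfect surjection. Lemma~\ref{lxb} yields a first countable perfect pre-image of $\omega_1$ inside $X$ \emph{provided} every separable subspace of $X$ is Lindel\"{o}f; once such a first countable perfect pre-image is in hand, the lemma preceding Theorem~\ref{thm:paracompactcopyallmodels} (that $\mr{PFA}(S)[S]$ implies \textbf{PPI}) delivers the desired copy of $\omega_1$. So the real content is to verify the Lindel\"{o}f hypothesis for separable subspaces.

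For a separable $D \subseteq X$, the closure $\ov{D}$ is separable, so $f[\ov{D}]$ is separable hence bounded in $\omega_1$; thus $\ov{D}$ lies in some compact initial segment $f^{-1}([0,\alpha])$ and is itself compact. Since countable tightness is closed-hereditary, $\ov{D}$ is compact, separable, and countably tight, and Lindel\"{o}fness of $D$ follows as soon as $\ov{D}$ is hereditarily Lindel\"{o}f. Hence the task reduces to the following countably tight analogue of Lemma~\ref{lxc}: \emph{$\mr{PFA}(S)[S]$ implies every compact, separable, countably tight space is hereditarily Lindel\"{o}f.}

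To attack this Key Claim I would argue contrapositively. If a compact, separable, countably tight $K$ fails to be hereditarily Lindel\"{o}f, then $K$ contains an uncountable right-separated sequence $W = \{x_\alpha : \alpha < \omega_1\}$, i.e.\ a compact $S$-subspace. Countable tightness of $K$ lets us choose, for each $\alpha$, a countable $A_\alpha \subseteq \{x_\beta : \beta < \alpha\}$ with $x_\alpha \in \ov{A_\alpha}$; the assignment $\alpha \mapsto A_\alpha$ serves as a rank-function on $W$ of countable type. Following the Todorcevi\'{c}-style destruction of $S$-spaces adapted to PFA$(S)$, one then designs an $S$-preserving proper poset that forces an uncountable $I \subseteq \omega_1$ refuting the right-separation of $\{x_\alpha : \alpha \in I\}$ via the $A_\alpha$ coloring, applies PFA$(S)$ in the ground model, and finally verifies that the obstruction to an $S$-subspace is preserved under the further forcing with the coherent Souslin tree $S$.

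The main obstacle lies in the last two steps: constructing an $S$-preserving proper poset that destroys $W$ \emph{without} appealing to the hereditary normality used in Lemma~\ref{lxc}, and then showing that the generic $\omega_1$-branch through $S$ does not resurrect right-separation. The first requires substituting the countable-tightness rank function $\alpha \mapsto A_\alpha$ for the normality-based separation of closed sets, and the second requires exploiting coherence of $S$ to absorb the color-class of the coloring into a level-coherent family. This is the technical core of the argument in \cite{DT2}; once it is executed, the Key Claim gives the Lindel\"{o}f hypothesis of Lemma~\ref{lxb}, and \textbf{PPI} then produces the required copy of $\omega_1$ inside $X$.
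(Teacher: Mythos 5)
There is a genuine gap, and it is fatal to the whole strategy rather than a missing technical step. Your ``Key Claim'' --- that $\mr{PFA}(S)[S]$ implies every compact, separable, countably tight space is hereditarily Lindel\"of --- is false, already in ZFC. Take an almost disjoint family $\mathcal{A}\subseteq[\omega]^\omega$ of size $\aleph_1$, let $\Psi(\mathcal{A})$ be the associated Mr\'owka space, and let $\Psi^*$ be its one-point compactification. Then $\Psi^*$ is compact and separable ($\omega$ is dense), and it is countably tight: by Balogh's lemma (quoted in Section 5) it suffices that $\Psi(\mathcal{A})$ include no perfect pre-image of $\omega_1$, and any closed countably compact subspace of $\Psi(\mathcal{A})$ meets $\mathcal{A}$ in a finite set, hence is countable, hence compact. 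But $\mathcal{A}$ is an uncountable discrete subspace of $\Psi^*$, so $\Psi^*$ has uncountable spread and is not hereditarily Lindel\"of. The hereditary normality hypothesis in Lemma~\ref{lxc} is thus doing essential work (it is exactly what excludes $\Psi$-like configurations, via Jones-lemma-type control of closed discrete subsets of separable spaces); countable tightness is not a substitute for it.

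The failure propagates back to your reduction: the hypothesis of Lemma~\ref{lxb} simply does not hold for arbitrary countably tight perfect pre-images of $\omega_1$. For instance, $\omega_1\times\Psi^*$ with the projection onto $\omega_1$ is a perfect pre-image of $\omega_1$, it is countably tight (tightness of a product of compact spaces is the supremum of the factors' tightnesses, applied to $(\omega_1+1)\times\Psi^*$), and $\{0\}\times\Psi(\mathcal{A})$ is a separable, non-Lindel\"of subspace. This space does of course contain copies of $\omega_1$, so it is no counterexample to the theorem, but it shows that no proof can proceed by first verifying ``separable subspaces are Lindel\"of'' and then invoking Lemma~\ref{lxb} plus $\PPI$. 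That is why the present paper does not prove Theorem~\ref{thmConj2} this way at all: it is imported from \cite{DT2}, where the argument works directly with the countably tight (equivalently, under \textbf{Moore-Mr\'owka}, sequential) structure of the pre-image rather than factoring through first countability and separability. Your reduction of the first paragraph is sound as far as it goes, but the ``technical core'' you defer is not merely hard --- the intermediate statement it is meant to establish is false.
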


The proof of Theorem \ref{thm38} is essentially the same as the proof in \cite{T} of our Theorem 2.2.

Countably tight, hereditarily normal perfect pre-images of $\omega_1$ are rather special:

\begin{defn}
Suppose $\pi : X \to \omega_1$.  We say $Y \subseteq X$ is \textbf{unbounded} if $\pi(Y)$ is unbounded.
\end{defn}

\begin{thm}\label{thm39}
PFA$(S)[S]$ implies that a countably tight, hereditarily normal, perfect pre-image of $\omega_1$ is 
the union of a paracompact space with a finite number of disjoint unbounded copies of $\omega_1$.
\end{thm}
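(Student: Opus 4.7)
The plan is to iteratively remove closed unbounded copies of $\omega_1$ from $X$, and then use hereditary normality together with the paper's central collectionwise-Hausdorff theorem to argue that the removal terminates after finitely many steps.

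\textbf{Structural setup.} $X$ is locally compact, as a perfect preimage of the locally compact $\omega_1$, and countably compact, since any countable subset of $X$ projects to a bounded subset of $\omega_1$ and hence lies in a compact fibre slab $\pi^{-1}([0,\alpha])$. Separable closed subspaces of $X$ likewise live in such slabs; the slabs are compact, separable, and hereditarily normal, hence hereditarily Lindel\"of by Lemma~\ref{lxc}. So every separable closed subspace of $X$ is Lindel\"of. Every copy $W\cong\omega_1$ inside $X$ is closed --- any $p\in\overline W$ lies, by countable tightness, in the closure of a countable subset of $W$, which is bounded in $W$ and therefore inside a compact initial segment of $W$ --- and unbounded, since a bounded copy would sit in a compact slab and itself be compact, contradicting non-compactness of $\omega_1$.

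\textbf{Iterative extraction.} Starting from $X_0 = X$, so long as $X_k$ is not paracompact I extract a copy $W_k\cong\omega_1$ of $\omega_1$ from $X_k$ and set $X_{k+1} = X_k\setminus W_k$. To supply $W_k$, I would pass to a closed subspace of $X_k$ on which $\pi$ restricts to a perfect map onto a club of $\omega_1$ (using an irreducibility argument as in the proof of Lemma~\ref{lxb}), and apply Theorem~\ref{thmConj2}, which guarantees such a copy inside any countably tight perfect preimage of $\omega_1$. By the structural setup each $W_k$ is automatically closed in $X$ and unbounded, and is disjoint from $W_0,\ldots,W_{k-1}$ by construction. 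Open subspaces of $X$ inherit local compactness, countable tightness, and hereditary normality, and the Lindel\"of-ness of their separable closed subspaces carries over because $X$-closures of such subspaces still lie in compact slabs to which Lemma~\ref{lxc} applies; so the hypotheses for the next round are preserved.

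\textbf{Finiteness --- the main obstacle.} Suppose the iteration ran for $\omega$ steps, producing pairwise disjoint closed unbounded copies $(W_k)_{k<\omega}$. Their projections $\pi(W_k)$ are closed unbounded in $\omega_1$, so clubs, and $D = \bigcap_{k<\omega}\pi(W_k)$ is a club. For each $\alpha\in D$ choose representatives $w_k^\alpha \in W_k \cap \pi^{-1}(\alpha)$; countable compactness of the fibre produces an accumulation point $p_\alpha \in \pi^{-1}(\alpha)$ of $\{w_k^\alpha\}_{k<\omega}$. A press-down on $\pi(p_\alpha)$ coheres these accumulation points on a stationary subset into a closed discrete $\aleph_1$-family lying in a locally compact, hereditarily normal piece of $X$ that cannot be separated by disjoint open sets, contradicting the paper's corrected central result that under $\mr{PFA}(S)[S]$ locally compact normal spaces are $\aleph_1$-collectionwise Hausdorff. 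I expect this step to be where the saturation of the nonstationary ideal on $\omega_1$ and the strong form of Chang's Conjecture flagged in the abstract must be invoked explicitly.

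\textbf{Conclusion.} The iteration therefore halts at some finite $n$ with $X_n$ paracompact, giving the required decomposition $X = X_n \cup W_0 \cup \cdots \cup W_{n-1}$ as the union of a paracompact space and $n$ pairwise disjoint closed unbounded copies of $\omega_1$.
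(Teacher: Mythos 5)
Your structural setup and the iterative extraction of copies of $\omega_1$ via Theorem~\ref{thmConj2} match the paper's proof of Theorem~\ref{thm39} essentially step for step. The divergence, and the genuine gap, is in the finiteness step. The paper terminates the iteration by quoting a ZFC lemma of Nyikos \cite[3.6]{Ny2}: a $T_5$ space mapping continuously onto $\omega_1$ with countably compact fibres over a stationary set cannot contain an infinite disjoint family of closed, countably compact subspaces with unbounded range. No extra set-theoretic axioms enter at this point; your expectation that the saturation of $\mr{NS}_{\omega_1}$ and strong Chang's Conjecture are needed here is misplaced --- those are used in Section 4 to prove \textbf{LCN}$(\aleph_1)$, which plays no role in this theorem.

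Your substitute finiteness argument fails for two concrete reasons. First, it invokes ``under PFA$(S)[S]$ locally compact normal spaces are $\aleph_1$-collectionwise Hausdorff,'' but the paper only obtains that conclusion from MM$(S)[S]$ (Theorem~\ref{thm41} requires \textbf{NSSAT}, \textbf{SCC}, and $2^{\aleph_1}=\aleph_2$); Theorem~\ref{thm39} is asserted from PFA$(S)[S]$ alone, so routing through \textbf{LCN}$(\aleph_1)$ would silently strengthen the hypothesis. Second, and more basically, $X$ is countably compact, so it has no infinite closed discrete subspace whatsoever; your family $\{p_\alpha\}$ of fibrewise accumulation points can only be closed discrete in some proper subspace of $X$, and you never identify that subspace, verify that it is locally compact and normal, or show that $\{p_\alpha\}$ cannot be separated there. (The correct use of hereditary normality is a pressing-down argument on separated unions of the $W_k$'s, which is exactly what Nyikos' lemma packages; quote it or reprove it rather than detouring through collectionwise Hausdorffness.) A smaller shared weakness: extracting $W_{k+1}$ from the open subspace $X_k$ needs justification, since $\pi\restriction X_k$ is no longer perfect --- the intended route is through the characterization of hereditarily paracompact locally compact hereditarily normal spaces, not through re-establishing perfectness as your irreducibility remark suggests.
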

\begin{proof}
By \ref{thmConj2}, the perfect pre-image $X$ includes a copy, $W_1$, of $\omega_1$.  If $W_1$ were bounded, then for some 
$\alpha$, $W_1 \subseteq \pi^{-1}([0, \alpha])$.  But $\pi^{-1}([0, \alpha])$ is compact, and $W_1$ -- being a countably 
compact subspace of a countably tight space -- is closed in $X$ and hence in $\pi^{-1}([0, \alpha])$.  But then $W_1$ is 
compact, contradiction.  Since perfect pre-images of locally compact spaces are locally compact, $X$ is locally compact.  
Since $W_1$ is closed, $X - W_1$ is open and so is also locally compact.  If it is paracompact, we are done; if not, apply 
\ref{thmConj2} to get a copy $W_2$ of $\omega_1$ included in $X - W_1$.  Continue.  The process must end at some finite 
stage, since:

\begin{lem}[{~\cite[3.6]{Ny2}}]
Let $X$ be a $T_5$ space, $\pi : X \to \omega_1$ continuous, $\pi^{-1}(\{\alpha\})$ countably compact for all $\alpha \in 
S$, a stationary subset of $\omega_1$.  
Then $X$ cannot include an infinite disjoint family of closed,
countably compact  
subspaces each with unbounded range.
\end{lem}

Note that the paracompact subspace is the topological sum of $\leq \aleph_1$ $\sigma$-compact subspaces.
\end{proof}

An early version of \cite{DT1} used the axioms $\bsigma^-$ (defined in Section 5), $\PPI$, and the $\aleph_1$-collectionwise Hausdorffness of first countable normal spaces, as well as \ref{thm39} to obtain ``countably compact, hereditarily 
normal manifolds of dimension $> 1$ are metrizable" without the $\mathbf{P}_{22}$ axiom used in \cite{DT1} to get the stronger assertion in 
which ``countably compact" is omitted.

Both of the conditions for paracompactness in \ref{thm38} are necessary:

\begin{eg}
	$\omega_1$ is locally compact, normal, first countable, its separable subspaces are countable, but it is not paracompact.
\end{eg}

\begin{eg}
	Van Douwen's ``honest example'' \cite{vD} is locally compact, normal, first countable, separable, does not include a perfect pre-image of $\omega_1$ (because it has a $G_\delta$-diagonal), but is not paracompact.
\end{eg}

\section{Strengthenings of {PFA}$(S)[S]$}

In addition to ``front-loading'' a PFA$(S)[S]$ model in order to get full collectionwise Hausdorffness, it has also been useful to employ strengthenings of PFA$(S)$ so as to obtain more reflection. E.g. in \cite{LT2} and \cite{T}, \textbf{\textup{Axiom R}} is employed.

\begin{defn}
	$C\subseteq[X]^{<\kappa}$ is \textbf{tight} if whenever $\{C_\alpha:\alpha<\delta\}$ is an increasing sequence from $C$ and $\omega<\text{\normalfont cf}(\delta)<\kappa$, $\bigcup\{C_\alpha:\alpha<\beta\}\in C$.
\end{defn}

\vspace{.2cm}

\noindent
\begin{minipage}[t]{2cm}
\textbf{Axiom R}
\end{minipage}
\begin{minipage}[t]{11.5cm}
If $\mathcal{S}\subseteq[X]^{<\omega_1}$ is stationary and $C\subseteq[X]^{<\omega_2}$ is tight and unbounded, then there is a $Y\in C$ such that $\mathcal{P}(Y)\cap\mathcal{S}$ is stationary in $[Y] ^{<\omega_1}$.
\end{minipage}

\vspace{.5cm}

\textbf{\textup{Axiom R}} (due to Fleissner \cite{Fle})  was obtained by using what is called \textit{PFA$^{++}(S)$} in \cite{LT2}, before forcing with $S$ \cite{LT2}. PFA$^{++}(S)$ holds if PFA$(S)$ is forced in the usual Laver-diamond way. Here we shall use a conceptually simple principle, MM$(S)$, which is forced in a more complicated way, but does not require a larger cardinal. The axiom \textit{Martin's Maximum} was introduced in \cite{FMS}. 

\begin{defn}
	Let $\mathcal{P}$ be a partial order such that forcing with $\mathcal{P}$ preserves stationary subsets of $\omega_1$. Let $\mathcal{D}$ be a collection of $\aleph_1$ dense subsets of $\mathcal{P}$. \textbf{\textup{Martin's Maximum}} \textup{(\textbf{MM})} asserts that for each such $\mathcal{D}$, there is a $\mathcal{D}$-generic filter included in $\mathcal{P}$.
\end{defn}

\begin{thm}[ {\cite{FMS}}]
	Assume there is a supercompact cardinal. Then there is a revised countable support iteration establishing {MM}.
\end{thm}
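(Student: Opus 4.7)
The plan is to reproduce the Foreman--Magidor--Shelah construction. Let $\kappa$ be supercompact and apply Laver's theorem to obtain a function $f:\kappa\to V_\kappa$ such that for every set $x$ and every $\lambda\geq|\mr{TC}(x)|$ there is a $\lambda$-supercompactness embedding $j:V\to M$ with $j(f)(\kappa)=x$. Build a revised countable support iteration $\langle P_\alpha,\dot Q_\alpha:\alpha<\kappa\rangle$ of length $\kappa$ by the rule: at stage $\alpha$, if $P_\alpha$ forces that $f(\alpha)$ is a stationary-set-preserving poset, let $\dot Q_\alpha=f(\alpha)$; otherwise let $\dot Q_\alpha$ be trivial.

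The first technical step is to invoke Shelah's preservation theorem: RCS iteration of stationary-set-preserving posets remains stationary-set-preserving. By induction on $\alpha\leq\kappa$ each $P_\alpha$ preserves $\w_1$ and every stationary $S\subseteq\w_1$. Combined with the inaccessibility of $\kappa$ and the standard size bookkeeping (each iterand has size $<\kappa$), a $\Delta$-system argument shows $P_\kappa$ has the $\kappa$-chain condition, so all cardinals $\geq\kappa$ are preserved and $\kappa=\w_2^{V[G]}$.

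To verify \textbf{MM} in $V[G]$, suppose $\mc P\in V[G]$ is stationary-set-preserving and $\mc D=\{D_\xi:\xi<\w_1\}$ is a family of $\aleph_1$ dense subsets of $\mc P$. After passing to a suitable subposet, take a $P_\kappa$-name $\dot{\mc P}\in V$ of some cardinality $\lambda$, along with names for the $D_\xi$. Using the Laver-guessing property, pick a $\lambda^+$-supercompactness embedding $j:V\to M$ with critical point $\kappa$ and $j(f)(\kappa)=\dot{\mc P}$. Then $j(P_\kappa)$ factors (in $M$) as $P_\kappa*\dot{\mc P}*\dot R$. A master-condition argument in $M$, together with the preservation theorem applied to $\dot R$, produces a name for a $\mc D$-generic filter on $\mc P$ that can be recovered in $V[G]$ by elementarity of $j$ and the closure $M^\lambda\subseteq M$.

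The principal obstacle is Shelah's preservation theorem for RCS iteration of stationary-set-preserving posets; without this deep technical result the entire framework collapses, since neither $\w_1$ nor the target generic filter could be controlled. A secondary subtlety is the master-condition calculus needed to extract the generic filter in $V[G]$: here the precise choice of $\lambda$ (large enough that $\mc D$, $\mc P$, and enough of the iteration are captured by $M$) matters, and one must verify that $M$ correctly computes the stationary-preservation of $\mc P$ relative to $V[G]$.
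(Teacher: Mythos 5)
The paper offers no proof of this theorem---it is quoted directly from \cite{FMS}---so the only question is whether your reconstruction of the Foreman--Magidor--Shelah argument is sound. It is not, because of one pivotal step: you invoke ``Shelah's preservation theorem: RCS iteration of stationary-set-preserving posets remains stationary-set-preserving.'' No such theorem exists, and its absence is precisely the difficulty the FMS proof is designed to circumvent. A finite iteration of stationary-set-preserving posets is again stationary-set-preserving, but the property can be lost at limit stages of an RCS iteration; there are standard examples (Namba-style forcings that fail to be semi-proper) whose iteration collapses $\omega_1$. Shelah's preservation theorem applies to \emph{semi-proper} posets: an RCS iteration of semi-proper iterands, with collapses to $\aleph_1$ interleaved, is semi-proper and hence preserves stationary subsets of $\omega_1$. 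The correct construction therefore lets $\dot Q_\alpha=f(\alpha)$ only when $P_\alpha$ forces $f(\alpha)$ to be semi-proper.

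This changes the verification step as well. Given a stationary-set-preserving $\mc{P}$ in $V[G]$ with name $\dot{\mc{P}}$ guessed by $j(f)$ at $\kappa$, you must show that stage $\kappa$ of $j(P_\kappa)$, as computed in $M$, really does force with $\dot{\mc{P}}$---that is, that $M[G]$ regards $\mc{P}$ as semi-proper, not merely stationary-set-preserving. In \cite{FMS} this is handled by first establishing the forcing axiom for semi-proper posets together with the reflection principle \textbf{SR} in the final model, and then using the theorem (quoted later in this very paper) that \textbf{SR} implies every poset preserving stationary subsets of $\omega_1$ is semi-proper; full \textbf{MM} then follows. Your closing remarks about the choice of $\lambda$ and about $M$ correctly computing stationary-preservation are on target, but they address the easier of the two obstacles; without replacing the false preservation claim by the semi-proper machinery, the construction does not get off the ground.
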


MM$(S)$ is defined analogously to PFA$(S)$; Miyamoto \cite{M} proved that there is a ``nice'' iteration establishing MM$(S)$ but preserving $S$. One can then define MM$(S)[S]$ analogously to PFA$(S)[S]$.

In order to obtain a model of PFA$(S)[S]$ in which Theorem \ref{thm:paracompactcopyallmodels} holds, we need to improve the model of \cite{LT2} so as to not only have \textbf{Axiom R} but also: 
\newline
\newline
\noindent
\begin{minipage}[t]{1.9cm}
	\textbf{LCN($\aleph_1$)}
\end{minipage}
\begin{minipage}[t]{10.6cm}
	Every locally compact normal space is $\aleph_1$-collectionwise Hausdorff.
\end{minipage}\newline

We shall prove that MM$(S)$ implies:

\noindent\newline
\begin{minipage}[t]{1.6cm}
	\textbf{NSSAT}
\end{minipage}
\begin{minipage}[t]{10.9cm}
	NS$_{\omega_1}$ (the non-stationary ideal on $\omega_1$) is $\aleph_2$-saturated.
\end{minipage}\newline

\noindent
\begin{minipage}{1cm}
	\textbf{SCC}
\end{minipage}
\begin{minipage}[t]{11.5cm}
	Strong Chang Conjecture. Let $\lambda>2^{\aleph_2}$ be a
        regular cardinal. Let $H(\lambda)$ be the collection of
        hereditarily $<\lambda$ sets. Let $M^*$ be an expansion of
        $\langle H_\lambda,\in\rangle$. Let $N\prec M^*$ (i.\,e. $N$
        is an elementary submodel of $M^*$) be countable. Then there
        is an $N'$ such that $N\prec N'\prec M^*$, $N'\cap
        \omega_1=N\cap\omega_1$, and $|N\cap\omega_2|=\aleph_1$. 
\end{minipage}
\newline
\newline

We also note:

\begin{lem}[{ \cite{To}}]
	\textup{MM}$(S)$ implies $2^{\aleph_1}=\aleph_2$.
\end{lem}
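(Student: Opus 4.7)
The plan is to establish the two inequalities $\aleph_2 \leq 2^{\aleph_1}$ and $2^{\aleph_1} \leq \aleph_2$ separately. The lower bound is immediate: MM$(S)$ implies MA$_{\omega_1}(S)$, and MA$_{\omega_1}$ already forces $2^{\aleph_0} \geq \aleph_2$, so \emph{a fortiori} $2^{\aleph_1} \geq \aleph_2$. All the real work lies in bounding $2^{\aleph_1}$ from above by $\aleph_2$.

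For this upper bound I would exploit the two combinatorial consequences of MM$(S)$ highlighted in the preceding discussion, namely NSSAT and SCC. Given $A \subseteq \omega_1$, pick a sufficiently large regular $\lambda$ and build a continuous $\in$-chain $\langle N_\alpha : \alpha < \omega_1\rangle$ of countable elementary submodels of $H(\lambda)$ with $A, S \in N_0$. Applying SCC, thicken each $N_\alpha$ to an $N_\alpha^* \prec H(\lambda)$ satisfying $N_\alpha^* \cap \omega_1 = N_\alpha \cap \omega_1$ and $|N_\alpha^* \cap \omega_2| = \aleph_1$; then record a stationary ``trace'' $T_A \subseteq \omega_1$ capturing the interaction of $A$ with the resulting thickened tower. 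Two subsets whose traces agree modulo a club end up assigned the same ordinal code below $\omega_2$, while traces differing on a stationary set pick out distinct classes modulo NS$_{\omega_1}$. By NSSAT there are at most $\aleph_2$ such classes, so $|\mathcal{P}(\omega_1)| \leq \aleph_2$.

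The main obstacle, as is typical of MM$(S)$ arguments, is ensuring that every auxiliary poset invoked along the way lies in the class of posets preserving the Souslin tree $S$. Both NSSAT and SCC are subtle enough that their derivation from MM$(S)$ is not a mere transplantation of the classical proofs from MM: one must verify that the witnessing stationary-preserving posets, together with any forcing used to realize the coding and thickening steps above, can be arranged in Miyamoto's iteration \cite{M} so as to preserve $S$. This is the verification carried out in Todorcevic's argument cited as \cite{To}; once it is in hand, the coding plus NSSAT bound goes through essentially as in the classical MM setting.
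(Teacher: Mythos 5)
The paper itself gives no proof of this lemma; it is quoted from Todorcevic, and within the paper's own machinery the intended derivation is: MM$(S)$ implies \textbf{SRP} (Miyamoto's result, quoted a few lines later), \textbf{SRP} implies $2^{\aleph_1}\leq\aleph_2$ (the lemma quoted from Todorcevic in Section 3), and the lower bound comes from MA$_{\omega_1}(S)$. Your lower bound is essentially right, but note that MM$(S)$ only yields MA$_{\omega_1}(S)$, not full MA$_{\omega_1}$, so you must check that the ccc poset witnessing $2^{\aleph_0}\geq\aleph_2$ (Cohen forcing, say) preserves $S$; this does hold, because a $\sigma$-centred poset is productively ccc with every ccc poset and hence keeps $S$ Souslin, but the step cannot be waved through as ``MA$_{\omega_1}$ already forces it.''

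The upper bound is where there is a genuine gap. The ``trace'' coding is never actually defined, and the two properties it would need are precisely the hard part: (i) that distinct $A,B\subseteq\omega_1$ produce traces $T_A,T_B$ differing on a stationary set, so that $A\mapsto[T_A]_{\mathrm{NS}_{\omega_1}}$ is injective, and (ii) that the resulting classes form an antichain-like family to which the $\aleph_2$-chain condition of $\mathcal{P}(\omega_1)/\mathrm{NS}_{\omega_1}$ applies. Neither is established, and I do not know of any theorem to the effect that \textbf{NSSAT} together with \textbf{SCC} implies $2^{\aleph_1}\leq\aleph_2$; the paper derives both \textbf{NSSAT} and \textbf{SCC} as consequences of \textbf{SRP}, and it is \textbf{SRP} itself, not these two weaker consequences, that Todorcevic shows gives the bound $2^{\aleph_1}\leq\aleph_2$. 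Moreover, your closing paragraph defers the entire verification (that the auxiliary posets preserve $S$ and that the coding works) to the very reference whose theorem is being proved, which makes the attempt circular. The repair is to route the upper bound through \textbf{SRP}: MM$(S)$ implies \textbf{SRP} by Miyamoto's iteration theorem, and \textbf{SRP} implies $2^{\aleph_1}\leq\aleph_2$ by Todorcevic's argument, both of which are already quoted in Section 3 of the paper.
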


With these, we can modify the proof in \cite{LT1} that forcing with a Souslin tree makes \textit{first countable normal spaces $\aleph_1$-collectionwise Hausdorff} to obtain \textit{locally compact normal spaces are $\aleph_1$-collectionwise Hausdorff}, and then, if we wish, front-load the model as in \cite{LT1} to obtain full collectionwise Hausdorffness, using the character reduction method of \cite{W}. More precisely, the crucial new step is:

\begin{thm}\label{thm41}
	Suppose there is a model in which there is a Souslin tree $S$ and in which \textbf{\textup{NSSAT}}, \textbf{\textup{SCC}}, and $2^{\aleph_1}=\aleph_2$ hold. Then $S$ forces that locally compact normal spaces are $\aleph_1$-collectionwise Hausdorff.
\end{thm}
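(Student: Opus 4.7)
The plan is to argue by contradiction in $V[G]$, where $G$ is an $S$-generic filter. Suppose $X$ is a locally compact normal space in $V[G]$ with an unseparated closed discrete subspace $D=\{d_\alpha:\alpha<\omega_1\}$. Following Watson's \emph{character reduction} method, the main technical task is to expand $D$ to a discrete family $\{K_\alpha:\alpha<\omega_1\}$ of pairwise disjoint compact sets, with $d_\alpha\in K_\alpha$, such that (i) each $K_\alpha$ admits an outer base $\mc{B}_\alpha$ of cardinality $\leq\aleph_1$ and (ii) $\{K_\alpha:\alpha<\omega_1\}$ is still not separated by disjoint open sets. Once such an expansion is in hand, the argument of Lemma~\ref{lem15} from \cite{LT1} adapts from the first countable setting to the $\aleph_1$-character setting to deliver the contradiction.

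For the expansion I would reflect the configuration through elementary submodels of large $H(\lambda)$. Using \textbf{SCC}, construct a continuous $\in$-chain $\langle N_\xi:\xi<\omega_1\rangle$ of elementary submodels of an expansion $M^*$ of $\langle H(\lambda),\in\rangle$ coding $X$ and $D$, where each $N_\xi$ is countable but $|N_\xi\cap\omega_2|=\aleph_1$; then pick $K_\alpha$ to be a compact neighborhood of $d_\alpha$ whose outer base is drawn from the $\aleph_1$ many open sets of $X$ coded in $N_\xi$ for the $\xi$ with $N_\xi\cap\omega_1=\alpha$. (Here $2^{\aleph_1}=\aleph_2$ is used to bound the number of candidate outer-base patterns.) A pressing-down argument, invoking \textbf{NSSAT} to discard non-stationary obstructions, should then ensure discreteness of the enlarged family together with the inheritance of unseparation from $D$.

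With the expansion secured, the proof of Lemma~\ref{lem15} from \cite{LT1} adapts as follows. If $\{K_\alpha:\alpha<\omega_1\}$ were separable in $V[G]$, normality would give disjoint open sets $U_i\supseteq\bigcup_{\alpha\in A_i}K_\alpha$ for the two halves $A_0,A_1$ of the partition of $\omega_1$ induced by the generic branch of $S$. Because each $K_\alpha$ has an $\aleph_1$-sized outer base and $S$ is ccc, an $S$-name for the pair $(U_0,U_1)$ decomposes into $\aleph_1$ many pieces each decided at a bounded level of $S$; \textbf{NSSAT} then forces the decision function to be constant on a stationary set, producing some $\alpha$ for which $K_\alpha$ lies in both $U_0$ and $U_1$ -- a contradiction.

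The main obstacle is the expansion step; this is precisely where the argument of \cite{T3} failed. The difficulty is the lack of uniform character control in a general locally compact normal space: each $d_\alpha$ must be thickened to a compact neighborhood with small outer base, coherently across $\alpha<\omega_1$, while preserving both discreteness and unseparation. The triple \textbf{SCC} (reflecting submodels of the correct internal cardinality), \textbf{NSSAT} (enabling pressing-down on $\omega_1$ modulo nonstationary), and $2^{\aleph_1}=\aleph_2$ (bounding candidate outer-base configurations) appears to be the minimal combination needed to carry out this reflection simultaneously for all $\aleph_1$ points of $D$.
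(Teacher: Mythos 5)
There is a genuine gap, and the route you propose is in fact blocked by an example in this very paper. Your plan is to expand $D$ to a discrete, still unseparated family of compact sets each with an outer base of size $\leq\aleph_1$, and then ``adapt'' the Souslin-tree argument of Lemma \ref{lem15} from the first countable setting to the $\aleph_1$-character setting. That adaptation cannot exist: Section 6 shows that $\mr{MA}_{\omega_1}(S)[S]$ yields a normal, non-$\aleph_1$-collectionwise Hausdorff space of character $\aleph_1$, so forcing with $S$ does not separate closed discrete sets whose points merely have $\aleph_1$-sized bases. Reduction to character $\aleph_1$ is the easy step (Watson's technique plus a quotient map, no set-theoretic hypotheses needed); the real work is to get all the way down to a $\sigma$-discrete expansion by compact $G_\delta$'s, i.e.\ countable outer bases, since only then does \cite[Theorem 12]{T3} (built on Lemma \ref{lem15}) apply. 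The second example of Section 6 --- a normalized closed discrete set with no normalized discrete expansion by compact $G_\delta$'s --- shows why this expansion cannot be produced directly and must instead be obtained by contradiction.

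The actual proof therefore assumes that no $\sigma$-discrete expansion of $\omega_1$ by compact $G_\delta$'s is forced to exist and derives a contradiction with local compactness. After preliminary reductions (notably Lemma \ref{lem48}, that after Souslin forcing a locally compact normal space with a dense Lindel\"of subspace has countable extent, and Corollary \ref{cor410}), one forms compact $G_\delta$'s $\dot Z(\xi,C)=\bigcap\{\dot U(\xi,\alpha):\alpha<\xi^+(C)\}$ and enumerates a base for the cub filter on $\omega_1$ in type $\omega_2$ --- this, and not a count of ``outer-base patterns,'' is where $2^{\aleph_1}=\aleph_2$ is used. The failure of $\sigma$-discreteness attaches to each cub $C_\gamma$ a stationary set $A_\gamma$ of ordinals $\delta$ at which $\{\dot Z(\xi,C_\gamma):\xi<\delta\}$ accumulates inside $[\delta,\beta(\delta)]$. \textbf{SCC} and \textbf{NSSAT} enter only through Lemma \ref{larson}: they produce a single countable $M$ with $|M\cap\omega_2|=\aleph_1$ whose $\delta=M\cap\omega_1$ lies in uncountably many $A_{\gamma}$ with $\gamma\in M$ simultaneously. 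From uncountably many witnesses $s_\alpha\in S$ one extracts one $\eta\in[\delta,\beta(\delta)]$ and a condition $\bar s$ forcing $\dot U(\eta,0)$ to meet infinitely many members of a single discrete family $\{\dot W(\gamma_\alpha,n):n\in\omega\}$, contradicting compactness of its closure. Your closing paragraph also misstates the endgame: in the Lemma \ref{lem15} argument the contradiction is that the generic partition of an unseparated set cannot be normalized (violating normality), not that a separated set yields some $K_\alpha$ inside both open sets, and \textbf{NSSAT} plays no role in that step.
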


It will be convenient to consider the following intermediate proposition, which implies the three things that we want:
\newline
\newline
\noindent
\begin{minipage}{1cm}
	\textbf{SRP}
\end{minipage}
\begin{minipage}[t]{12.5cm}
			Strong Reflection Principle
                        \cite{To2}. Suppose $\lambda\geq\aleph_2$ and
                        $\mathfrak{Z}\subseteq\mathcal{P}_{\omega_1}(\lambda)$
                        and that for each stationary
                        $T\subseteq\omega_1$, 
			\begin{equation*}
				\{\sigma\in\mathfrak{Z}:\sigma\cap\omega_1\in T\}
			\end{equation*}
			is stationary in $\mathcal{P}_{\omega_1}(\lambda)$. Then for all $X\subseteq\lambda$ of cardinality $\aleph_1$, there exists $Y\subseteq\lambda$ such that:
			\begin{itemize}
				\item[(a)] $X\subseteq Y$ and $|Y|=\aleph_1$;
				\item[(b)] $\mathfrak{Z}\cap\mathcal{P}_{\omega_1}(Y)$ contains a set which is closed unbounded in $\mathcal{P}_{\omega_1}(Y)$.
			\end{itemize}
\end{minipage}
\newline
\newline

 With regard to \textbf{SCC}, Shelah \cite[XII.2.2, XII.2.5]{S} proves that:

\begin{lem}
	If there is a semi-proper forcing P changing the cofinality of $\aleph_2$ to $\aleph_0$, then \textbf{\textup{SCC}} holds.
\end{lem}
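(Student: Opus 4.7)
\begin{PROOF}
My plan is to derive \textbf{SCC} from the existence of such a $P$, following the scheme in Shelah's book. Fix $\lambda>2^{\aleph_2}$, an expansion $M^*$ of $\langle H_\lambda,\in\rangle$, and a countable $N\prec M^*$; fix also a $P$-name $\dot c$ forced to be a cofinal $\omega$-sequence through $\omega_2^V$. The goal is to produce $N'\supseteq N$ of cardinality $\aleph_1$ with $N'\prec M^*$, $N'\cap\omega_1=N\cap\omega_1$, and $|N'\cap\omega_2|=\aleph_1$.

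The strategy is to build inside $V$ a continuous elementary chain $\langle N_\alpha:\alpha<\omega_1\rangle$ of countable submodels of $M^*$, with $N_0\supseteq N$, $N_\alpha\cap\omega_1=N\cap\omega_1$ for every $\alpha$, and $\sup(N_\alpha\cap\omega_2)$ strictly increasing. The union $N':=\bigcup_{\alpha<\omega_1}N_\alpha$ then has the required properties: size $\aleph_1$, the prescribed $\omega_1$-trace, and $\aleph_1$-many $\omega_2$-elements. Countable-cofinality limits present no difficulty, so the whole argument reduces to the successor step.

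For the successor step, enlarge $N_\alpha$ to a countable $N_\alpha^*\prec\langle H_\mu,\in\rangle$ (for $\mu$ much larger than $\lambda$ and carrying $P,\dot c$) with $N_\alpha\subseteq N_\alpha^*$. Semi-properness of $P$ furnishes an $(N_\alpha^*,P)$-semi-generic $p\in P$. Force with a generic $G\ni p$: then $N_\alpha^*[G]\cap\omega_1=N_\alpha^*\cap\omega_1$, and $\dot c[G]$ is a cofinal $\omega$-sequence in $\omega_2^V$ all of whose terms lie in $N_\alpha^*[G]$. Pick $\gamma:=\dot c[G](n)$ exceeding $\sup(N_\alpha\cap\omega_2)$ and define $N_{\alpha+1}$ to be the Skolem hull in $M^*$ of $N_\alpha\cup\{\gamma\}$; this lies in $V$ since all ingredients do, and it strictly enlarges $N_\alpha\cap\omega_2$ by construction.

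The main obstacle — and the technical heart of the proof — is controlling the $\omega_1$-trace of $N_{\alpha+1}$: one must ensure $N_{\alpha+1}\cap\omega_1=N_\alpha\cap\omega_1$, not merely $\subseteq N_\alpha^*\cap\omega_1$. A direct Skolem-hull computation places any $\beta=f(\vec x,\gamma)\in N_{\alpha+1}\cap\omega_1$ inside $N_\alpha^*\cap\omega_1$ by applying semi-genericity to the $P$-name $\dot\beta=f(\vec x,\dot c(n))\in N_\alpha^*$, but there is no immediate reason this value lies in the smaller $N_\alpha$. Shelah resolves this via the game-theoretic reformulation of semi-properness \cite[XII.2.2, XII.2.5]{S}: player II's winning strategy produces countable extensions that preserve precisely $N\cap\omega_1$, and the cofinal sequence supplied by $\dot c$ is what lets II respond to player I's plays. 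Iterating the strategy $\omega_1$-many times inside $V$ yields the chain, and hence $N'$.
\end{PROOF}
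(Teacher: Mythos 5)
First, a point of reference: the paper does not prove this lemma at all --- it is stated as a quoted result of Shelah \cite[XII.2.2, XII.2.5]{S} --- so there is no in-paper argument to compare against. Your reduction of \textbf{SCC} to a one-step extension lemma (find countable $N_{\alpha+1}\succ N_\alpha$ with the same $\omega_1$-trace and a strictly larger $\omega_2$-trace, then iterate $\omega_1$ times and take the union) is indeed the standard route, and the limit stages and the cardinality count for $N'$ are fine.

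However, your proposal has a genuine gap, and it is exactly the one you flag yourself: the successor step. Semi-genericity of $p$ for the auxiliary model $N_\alpha^*$ only yields $N_\alpha^*[G]\cap\omega_1=N_\alpha^*\cap\omega_1$, hence $N_{\alpha+1}\cap\omega_1\subseteq N_\alpha^*\cap\omega_1$; but $N_\alpha^*\cap\omega_1$ is in general \emph{strictly} larger than $N_\alpha\cap\omega_1$. Indeed, if $N_\alpha\in N_\alpha^*$ then the ordinal $\sup(N_\alpha\cap\omega_1)$ is definable in $H_\mu$ from $N_\alpha$ and so lies in $N_\alpha^*\cap\omega_1\setminus N_\alpha$; and even with only $N_\alpha\subseteq N_\alpha^*$, the Skolem functions of $\langle H_\mu,\in,P,\dot c,\dots\rangle$ applied to parameters of $N_\alpha$ will typically produce new countable ordinals. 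So the argument as written cannot give $N_{\alpha+1}\cap\omega_1=N_\alpha\cap\omega_1$, and there is no choice of $N_\alpha^*$ that repairs this directly. Your closing appeal to ``the game-theoretic reformulation of semi-properness'' is not a proof of this step; it is precisely the content of Shelah XII.2.2--2.5 (equivalently, one must show that for every countable $N\prec M^*$ the set of $\gamma<\omega_2$ with $\mathrm{Sk}^{M^*}(N\cup\{\gamma\})\cap\omega_1=N\cap\omega_1$ is cofinal in $\omega_2$, which requires controlling $f(\gamma)$ for \emph{all} $f\in N\cap{}^{\omega_2}\omega_1$ below $N\cap\omega_1$ itself, not below the trace of some larger model). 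As it stands, the proposal is scaffolding around the lemma's actual mathematical content rather than a proof of it; you would need to either carry out Shelah's game argument or simply cite it, as the paper does.
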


There are various versions of \textit{Namba forcing}, e.g. two in \cite{S} and one in \cite{Lar}. All of these change the cofinality of $\aleph_2$ to $\aleph_0$. Larson states in \cite[p.142]{Lar} that his version of Namba forcing preserves stationary subsets of $\omega_1$. In \cite{FMS}, it is shown that a principle, \textbf{SR}, implies \textit{any forcing that preserves stationary subsets of $\omega_1$ is semi-proper}. \textbf{SR} is a consequence of MM \cite{FMS}.
\textup{\textbf{SRP}} is stronger than \textbf{SR} and so:
\newline

\begin{lem}
	\textup{\textbf{SRP}} implies \textup{\textbf{SCC}}.
\end{lem}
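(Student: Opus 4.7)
The plan is to string together implications already set up in the excerpt. The chain is \textbf{SRP} $\Rightarrow$ \textbf{SR} $\Rightarrow$ (Larson's Namba forcing is semi-proper) $\Rightarrow$ \textbf{SCC}. The rightmost implication is the Shelah lemma stated immediately before the target, applied to Larson's Namba forcing, which preserves stationary subsets of $\omega_1$ and changes $\mathrm{cf}(\aleph_2)$ to $\omega$. The middle implication combines the Foreman--Magidor--Shelah theorem that \textbf{SR} makes every stationary-preserving forcing semi-proper with Larson's observation about his Namba forcing; both are explicitly cited in the paragraph immediately above the statement of the target lemma. So the only implication needing actual argument is \textbf{SRP} $\Rightarrow$ \textbf{SR}.

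For that step I would appeal to Todorcevic's original paper on \textbf{SRP} (cited as [To2]), where the implication is established. The idea is that, given a stationary $\mathcal{T} \subseteq \mathcal{P}_{\omega_1}(\lambda)$ that one wants to reflect as in \textbf{SR}, one can thin $\mathcal{T}$ to a $\mathfrak{Z}$ satisfying the ``stationary slice'' hypothesis of \textbf{SRP}---namely, stationarity in $\mathcal{P}_{\omega_1}(\lambda)$ of the set $\{\sigma \in \mathfrak{Z} : \sigma \cap \omega_1 \in T\}$ for every stationary $T \subseteq \omega_1$. Applying \textbf{SRP} to $\mathfrak{Z}$ then produces $Y \subseteq \lambda$ with $|Y| = \aleph_1$ for which $\mathfrak{Z} \cap \mathcal{P}_{\omega_1}(Y)$ contains a club in $\mathcal{P}_{\omega_1}(Y)$ and is therefore in particular stationary there. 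Since $\mathfrak{Z} \subseteq \mathcal{T}$, this witnesses the reflection that \textbf{SR} demands.

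The main obstacle is bibliographic rather than mathematical: the precise formulations of \textbf{SR} in [FMS] and of \textbf{SRP} in [To2] must be matched so that the club-sized reflection produced by \textbf{SRP} really does imply the weaker stationary reflection that \textbf{SR} requires, and so that the thinning step above---the passage from an arbitrary stationary $\mathcal{T}$ to a stationary $\mathfrak{Z}$ with the ``full-slice'' property for every stationary $T \subseteq \omega_1$---is available. Once these definitional details are aligned, the write-up collapses to the short chain of citations described above, with the only substantive appeal being to [To2] for the first link.
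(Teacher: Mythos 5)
Your proposal follows exactly the paper's own route: Shelah's lemma reduces \textbf{SCC} to the existence of a semi-proper forcing collapsing the cofinality of $\aleph_2$ to $\omega$, Larson's Namba forcing supplies such a forcing once every stationary-set-preserving poset is semi-proper, and that in turn follows from \textbf{SR}, which the paper (like you) notes is a consequence of \textbf{SRP}. The only difference is that you sketch the \textbf{SRP} $\Rightarrow$ \textbf{SR} step, which the paper simply asserts with a citation; your argument is correct and essentially identical in structure.
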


\begin{lem}[ \cite{M}]
	\textup{MM}$(S)$ implies \textup{\textbf{SRP}}.
\end{lem}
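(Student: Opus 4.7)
The plan is to adapt Todorcevic's standard proof (\cite{To2}) that MM implies \textbf{SRP}, verifying that the forcing used can be arranged to preserve the Souslinity of $S$, so that MM$(S)$ applies in place of MM. Fix $\lambda\geq\aleph_2$, $\mathfrak{Z}\subseteq\mathcal{P}_{\omega_1}(\lambda)$ satisfying the stationarity hypothesis, and $X\subseteq\lambda$ with $|X|=\aleph_1$. For a sufficiently large regular $\theta$, let $\mathbb{P}$ consist of continuous $\in$-chains $p=\langle N^p_\xi:\xi\leq\alpha_p\rangle$ of countable elementary submodels of $H(\theta)$, each containing $X$, $\mathfrak{Z}$, and $S$ as elements, with $N^p_\xi\cap\lambda\in\mathfrak{Z}$ for every $\xi\leq\alpha_p$, ordered by end-extension. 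If $G$ meets the $\aleph_1$ obvious dense sets asserting that each $\beta<\omega_1$-th element of $X$ is covered and that the chain grows in rank, then $Y:=\bigcup\{N^p_\xi\cap\lambda:p\in G,\ \xi\leq\alpha_p\}$ has cardinality $\aleph_1$, contains $X$, and the family of $N^p_\xi\cap\lambda$ arising from $G$ forms a club in $\mathcal{P}_{\omega_1}(Y)$ contained in $\mathfrak{Z}$, which is exactly the conclusion of \textbf{SRP} for $(\mathfrak{Z},X)$.

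Two preservation properties must be checked so that MM$(S)$ delivers such a $G$. For preservation of stationary subsets of $\omega_1$, given a $\mathbb{P}$-name $\dot C$ for a club in $\omega_1$, a stationary $T\subseteq\omega_1$, and a condition $p$, the hypothesis on $\mathfrak{Z}$ supplies a countable $M\prec H(\theta')$ containing $p,\dot C,\mathbb{P},\mathfrak{Z}$ with $\delta:=M\cap\omega_1\in T$ and $M\cap\lambda\in\mathfrak{Z}$; appending $M\cap H(\theta)$ to $p$ produces an $(M,\mathbb{P})$-generic condition forcing $\delta\in\dot C$, which is Todorcevic's original calculation. For preservation of $S$, one proves the stronger property that $\mathbb{P}$ is $S$-preserving proper in Miyamoto's sense \cite{M}: for any such $M$, any $p\in M\cap\mathbb{P}$, and any node $t\in S$ at level $\delta$, there is an $(M,\mathbb{P})$-generic $q\leq p$ with terminal model $M\cap H(\theta)$ such that $q$ forces $t$ to be comparable with a node of every maximal antichain of $S$ lying in $M$. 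Such a $q$ is constructed by fusion through the countably many open dense subsets of $\mathbb{P}$ in $M$, using coherence of $S$ at each step to keep the cone above $t$ meeting the next antichain from $M$.

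The main obstacle is the $S$-preservation step; the classical MM-argument invokes only properness, while here the master-condition fusion must be interleaved with preservation of the cone above a fixed node $t\in S$. It is the coherence of $S$, together with the fact that each extension in $\mathbb{P}$ decides only countable information about the eventual $Y$, that allows the fusion to close up correctly at the limit without breaking Souslinity. Once both preservation properties are verified, MM$(S)$ applied to $\mathbb{P}$ produces the required generic filter, and $Y$ is read off from $G$ as above, completing the argument.
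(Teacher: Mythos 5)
The paper offers no proof of this lemma---it is quoted from Miyamoto's work---so the only thing to assess is whether your argument would actually establish it. Your overall architecture is the right one (and surely the intended one): run the standard Todorcevic poset $\mathbb{P}$ of continuous $\in$-chains of countable elementary submodels with traces in $\mathfrak{Z}$, and verify that under the stationarity hypothesis on $\mathfrak{Z}$ it preserves both stationary subsets of $\omega_1$ and the Souslinity of $S$, so that MM$(S)$ applies. The first paragraph and the stationary-preservation step are standard, though note that merely ``appending $M\cap H(\theta)$'' does not give an $(M,\mathbb{P})$-generic condition for an end-extension order; you must first build a descending chain inside $M$ through the dense sets of $M$ whose top models union up to $M\cap H(\theta)$, and only then cap with $M\cap H(\theta)$ (continuity at the limit index is exactly why the cap must equal that union).

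The gap is in the one step that distinguishes this lemma from ``MM implies SRP,'' namely $S$-preservation. First, the property you state---that $q$ forces $t$ to be comparable with a node of every maximal antichain of $S$ \emph{lying in} $M$---is vacuous: a ground-model maximal antichain in $M$ is countable (as $S$ is Souslin), hence a subset of $M$, hence contained in $S_{<\delta}$, and every $t\in S_\delta$ already extends one of its members. To preserve Souslinity you must handle $\mathbb{P}$-\emph{names} $\dot A\in M$ forced to be maximal antichains of $S$, and arrange a single $q$ forcing every $t\in S_\delta$ to lie above a member of $\dot A$, so that $q\Vdash \dot A\subseteq S_{<\delta}$ is countable. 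Second, coherence of $S$ plays no role here, and ``keeping the cone above $t$'' is not the mechanism (the cone above $t$ lies entirely outside $M$). The correct step is: for $p_n\in M\cap\mathbb{P}$ and such a name $\dot A\in M$, the set $W=\{s\in S:(\exists p'\leq p_n)\ p'\Vdash s \text{ lies above a member of } \dot A\}$ is upward closed and predense in the tree $S$ (by maximality of $\dot A$), belongs to $M$, and hence---by Souslinity of $S$ in the ground model---contains a countable maximal antichain $B\subseteq M\cap S_{<\delta}$; so $t$ extends some $s\in B$, and elementarity yields $p_{n+1}\leq p_n$ in $M$ forcing $t$ to lie above a member of $\dot A$. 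Third, you never say why the fusion has a lower bound; ``decides only countable information'' is not the reason. The reason is that the tops of the $p_n$ are made to union to $M\cap H(\theta)$ and $M$ was chosen with $M\cap\lambda\in\mathfrak{Z}$ (possible by the hypothesis on $\mathfrak{Z}$ applied to $T=\omega_1$), so the capped chain is again a condition. With these three repairs the proof goes through.
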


\begin{lem}[{ \cite[quoted in 45, p.40]{To2}}] 
	\textbf{\textup{SRP}} implies \textbf{\textup{NSSAT}} and $2^{\aleph_1}\leq\aleph_2$.
\end{lem}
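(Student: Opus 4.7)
Plan. Both conclusions are classical consequences of SRP due to Todorcevic \cite{To2}, and I would prove them by adapting his arguments.

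For \textbf{NSSAT}, I would argue by contradiction, assuming $\{S_\alpha : \alpha < \omega_2\}$ is a maximal antichain in $\mathrm{NS}_{\omega_1}^+$. The first ingredient is the test set
\[
\mathfrak{Z} = \{\sigma \in [\omega_2]^{\aleph_0} : (\exists \alpha \in \sigma)\, \sigma \cap \omega_1 \in S_\alpha\},
\]
for which the maximality of the antichain, together with a standard elementary submodel construction in a large $H_\theta$, verifies the stationarity hypothesis of SRP: for each stationary $T \subseteq \omega_1$ and each $\delta \in T$ belonging to some $S_\alpha$, one produces a countable $\sigma \in \mathfrak{Z}$ with $\sigma \cap \omega_1 = \delta$ closed under any prescribed finitary function. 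SRP then yields $Y \subseteq \omega_2$ of cardinality $\aleph_1$ and a club $C \subseteq [Y]^{\aleph_0} \cap \mathfrak{Z}$, and hence a club $D \subseteq \omega_1$ with $D \subseteq \bigcup_{\alpha \in Y} S_\alpha$. For any $\beta \in \omega_2 \setminus Y$, $S_\beta \cap D$ is stationary and contained in $\bigcup_{\alpha \in Y}(S_\alpha \cap S_\beta)$. The contradiction is extracted by enriching $\mathfrak{Z}$ to also encode fixed clubs $C_{\alpha,\gamma}$ disjoint from $S_\alpha \cap S_\gamma$, so that inside the reflected $Y$ a diagonal intersection against an enumeration of $Y$ of order type $\omega_1$ forces $\bigcup_{\alpha \in Y}(S_\alpha \cap S_\beta)$ to be nonstationary.

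For $2^{\aleph_1} \leq \aleph_2$, I would apply SRP with $\lambda = \omega_3$ to a set $\mathfrak{Z}$ coding, via a hypothetical enumeration $\{X_\xi : \xi < \omega_3\}$ of pairwise distinct subsets of $\omega_1$, those countable $\sigma$ that are elementary in a fixed structure distinguishing the $X_\xi$'s. Reflection to a $Y$ of size $\aleph_1$ then produces a submodel that must distinguish $\aleph_3$ many $X_\xi$'s using only $\aleph_1$ many parameters, which is a cardinality contradiction; the stationarity hypothesis again comes from a standard elementary submodel existence argument.

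The main obstacle is the final step in the \textbf{NSSAT} argument: in ZFC a stationary subset of $\omega_1$ can genuinely be a union of $\aleph_1$ nonstationary pieces (this is precisely what failure of saturation means), so the inclusion $S_\beta \cap D \subseteq \bigcup_{\alpha \in Y}(S_\alpha \cap S_\beta)$ is not per se contradictory. The contradiction has to be engineered by reflecting enough club witnesses into the $Y$ produced by SRP so that a single diagonal intersection against an enumeration of $Y$ of length $\omega_1$ annihilates the entire union — essentially carrying an Ulam-matrix-style refinement through the reflection. This bookkeeping is the technical heart of Todorcevic's argument, which I would follow in detail from \cite{To2}.
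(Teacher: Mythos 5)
The paper does not prove this lemma; it is quoted from Todorcevic (the cited reference is where the proofs are written up), so there is nothing in-paper to compare against and your sketch must stand on its own. The \textbf{NSSAT} half does: it is the standard argument, and in fact the ``main obstacle'' you describe at the end is not really there. Because the witness $\alpha$ in your definition of $\mathfrak{Z}$ is required to be an \emph{element} of $\sigma$, once you take $Y\supseteq\omega_1$ with an enumeration $\{y_\xi:\xi<\omega_1\}$ and restrict to the club of $\delta$ with $\sigma_\delta=\{y_\xi:\xi<\delta\}\in C$ and $\sigma_\delta\cap\omega_1=\delta$, you get $\delta\in S_{y_\xi}$ for some $\xi<\delta$; that is, $D$ lies in the \emph{diagonal} union $\nabla_{\xi<\omega_1}S_{y_\xi}$. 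For $\beta\in\omega_2\setminus Y$ this puts the stationary set $S_\beta\cap D$ inside $\nabla_{\xi<\omega_1}(S_\beta\cap S_{y_\xi})$, a diagonal union of nonstationary sets, which is nonstationary by normality of $\mathrm{NS}_{\omega_1}$ --- the same $\aleph_1$-completeness-plus-diagonal-union fact the paper itself invokes at the start of Lemma \ref{larson}. No enrichment of $\mathfrak{Z}$ by clubs $C_{\alpha,\gamma}$ is needed (though it would do no harm).

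The genuine gap is in the $2^{\aleph_1}\leq\aleph_2$ half. The $\mathfrak{Z}$ you propose --- countable $\sigma$ elementary in a fixed structure on $\omega_3$ listing the $X_\xi$ --- already contains a club of $[\omega_3]^\omega$, so the stationarity hypothesis of \textbf{SRP} is satisfied trivially and its conclusion is vacuous: any elementary $Y$ of size $\aleph_1$ witnesses it. Such a $Y$ contains only $\aleph_1$ of the indices $\xi$ and carries no obligation whatsoever toward the remaining $X_\xi$'s, so the advertised ``cardinality contradiction'' never materializes. The actual argument is different in kind: fix a partition $\{S_i:i<\omega_1\}$ of $\{\gamma<\omega_2:\mathrm{cf}(\gamma)=\omega\}$ into stationary sets and, for each $A\subseteq\omega_1$, use \textbf{SRP} (applied to a suitable $\mathfrak{Z}\subseteq[\omega_2]^\omega$ built from the $S_i$ for $i\in A$) to produce an ordinal $\gamma_A<\omega_2$ of cofinality $\omega_1$ such that $S_i\cap\gamma_A$ is stationary in $\gamma_A$ precisely when $i\in A$. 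Then $A\mapsto\gamma_A$ is injective, so $2^{\aleph_1}\leq\aleph_2$. This is the Foreman--Magidor--Shelah coding of subsets of $\omega_1$ by ordinals below $\omega_2$; you would need to replace your second argument by it or by some equally nontrivial substitute.
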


For the proof of \ref{thm:paracompactcopyallmodels} we should also remark that:

\begin{lem}
	\textbf{\textup{SRP}} implies \textbf{\textup{Axiom R}}.
\end{lem}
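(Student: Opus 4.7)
The plan is to derive \textbf{Axiom R} from \textbf{SRP} by applying the reflection principle to a projectively stationary family associated with $\mathcal{S}$ and then using the tightness of $C$ to place the reflecting set inside $C$. I would assume without loss of generality that $|X|\geq\aleph_2$ (otherwise $X$ itself serves as $Y$), and work in the Feng--Jech framework of projectively stationary sets throughout.

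First I would reduce to a projectively stationary situation. Given stationary $\mathcal{S}\subseteq [X]^{<\omega_1}$, a standard Feng--Jech style argument produces a stationary $T\subseteq\omega_1$ such that $\mathcal{S}\cap\pi^{-1}(T')$ is stationary in $[X]^{<\omega_1}$ for every stationary $T'\subseteq T$, where $\pi(\sigma)=\sup(\sigma\cap\omega_1)$. Then
\[
\mathfrak{Z}\;=\;\bigl(\mathcal{S}\cap\pi^{-1}(T)\bigr)\cup\pi^{-1}(\omega_1\setminus T)
\]
is projectively stationary in $[X]^{<\omega_1}$, and any club inside $\mathfrak{Z}\cap[Y]^{<\omega_1}$ witnesses, via its trace on $\pi^{-1}(T)$, that $\mathcal{S}\cap[Y]^{<\omega_1}$ is stationary in $[Y]^{<\omega_1}$.

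Next I would lift the setup to elementary submodels so that $C$ can enter the picture. For sufficiently large regular $\theta$, put
\[
\widetilde{\mathfrak{Z}}\;=\;\{N\in[H_\theta]^{\omega}:N\prec(H_\theta,\in,<_\theta,\mathcal{S},C,X),\ N\cap X\in\mathfrak{Z}\},
\]
and check that projective stationarity lifts from $\mathfrak{Z}$ to $\widetilde{\mathfrak{Z}}$. Choose $X_0\in C$ of size $\aleph_1$ containing the listed parameters, and apply \textbf{SRP} to $\widetilde{\mathfrak{Z}}$ with seed $X_0$ to obtain $Y^*\supseteq X_0$ of size $\aleph_1$ with $\widetilde{\mathfrak{Z}}\cap[Y^*]^{\omega}$ containing a club $E$ in $[Y^*]^{\omega}$. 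The projection $N\mapsto N\cap X$ carries $E$ onto a stationary family in $[Y^*\cap X]^{\omega}$ that is contained in $\mathcal{S}$, so $\mathcal{S}\cap[Y^*\cap X]^{<\omega_1}$ is stationary in $[Y^*\cap X]^{<\omega_1}$.

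The main obstacle is arranging that $Y:=Y^*\cap X$ actually lies in $C$. I would write $Y^*=\bigcup_{\alpha<\omega_1}Y^*_\alpha$ as a continuous increasing chain of countable elementary submodels drawn from the club $E$, and recursively extract a continuous $\omega_1$-chain $\langle D_\alpha:\alpha<\omega_1\rangle\subseteq C\cap Y^*$ whose union is exactly $Y$: by elementarity of $Y^*_{\alpha+1}$ and unboundedness of $C$, pick $D_\alpha\in Y^*_{\alpha+1}\cap C$ extending every $D_\beta$ ($\beta<\alpha$) and covering $Y^*_\alpha\cap X$, engineered so that each element of $Y$ is eventually absorbed. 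Tightness of $C$ (closure under $\omega_1$-chain unions) then gives $Y=\bigcup_\alpha D_\alpha\in C$. The delicate point here is the equality $\bigcup_\alpha D_\alpha=Y$, since a countable elementary submodel of $H_\theta$ need not contain all elements of its size-$\aleph_1$ members; a naive selection only yields $Y\subseteq\bigcup_\alpha D_\alpha$. Forcing the reverse inclusion is the crux, and is handled either by thickening $\widetilde{\mathfrak{Z}}$ with further closure conditions so that every relevant $N\in E$ exhausts enough of each $D\in C\cap N$, or by invoking a stronger equivalent form of \textbf{SRP} that directly produces a tight-unbounded collection of reflecting sets and intersecting with $C$.
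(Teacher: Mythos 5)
There is a genuine gap, and it sits exactly where you flag it: the argument that the reflecting set belongs to $\mathcal{C}$. Your route (the reflection form of \textbf{SRP} applied to a projectively stationary $\widetilde{\mathfrak{Z}}$) hands you a set $Y^*$ of size $\aleph_1$ and a club $E\subseteq\widetilde{\mathfrak{Z}}\cap[Y^*]^{\omega}$, but nothing more, and neither of your two proposed repairs closes the hole. ``Thickening $\widetilde{\mathfrak{Z}}$ so that each $N\in E$ exhausts enough of each $D\in\mathcal{C}\cap N$'' cannot work as stated: a countable elementary submodel can never exhaust an uncountable member of $\mathcal{C}$, so no closure condition on individual countable models forces $\bigcup_\alpha D_\alpha\subseteq Y^*\cap X$. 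Invoking ``a stronger equivalent form of \textbf{SRP} that directly produces a tight-unbounded collection of reflecting sets'' is deferring the whole problem. And you cannot simply settle for $Y':=\bigcup_\alpha D_\alpha\supseteq Y$ with $Y'\in\mathcal{C}$, because stationarity of $\mathcal{S}\cap[Y]^{<\omega_1}$ in $[Y]^{<\omega_1}$ does not transfer upward to $[Y']^{<\omega_1}$. What actually makes the step go through --- and this is precisely the content of the paper's proof, which uses the Feng--Jech \emph{elementary chain} form of \textbf{SRP} rather than the reflection form --- is to work with a continuous $\omega_1$-chain $\langle N_\alpha:\alpha<\omega_1\rangle$ of countable elementary submodels whose union contains all of $\omega_1$, and to interleave $Y_\alpha\in\mathcal{C}\cap N_{\alpha+1}$ with $\bigcup(\mathcal{C}\cap N_\alpha)\subseteq Y_\alpha$. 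Because $\omega_1\subseteq\bigcup_\alpha N_\alpha$, every size-$\aleph_1$ member of $\mathcal{C}$ lying in some $N_{\alpha+1}$ is absorbed into the union of the chain (enumerate it by a surjection from $\omega_1$ inside $N_{\alpha+1}$), which yields the exact equality $\bigcup_\alpha Y_\alpha=\bigcup_\alpha(N_\alpha\cap\kappa)$; tightness then puts this set in $\mathcal{C}$. Your approach is salvageable, but only by importing this same mechanism: seed \textbf{SRP} with $X_0\cup\omega_1$ so that $\omega_1\subseteq Y^*$, decompose $Y^*$ as a continuous increasing union of members of $E$ (which are elementary submodels), and run the identical interleaving. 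That missing argument is the theorem, not a detail.

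A secondary unjustified step: your opening reduction asserts that every stationary $\mathcal{S}\subseteq[X]^{<\omega_1}$ admits a stationary $T\subseteq\omega_1$ with $\mathcal{S}\cap\pi^{-1}(T')$ stationary for \emph{every} stationary $T'\subseteq T$. This says the normal ideal $\{A\subseteq\omega_1:\mathcal{S}\cap\pi^{-1}(A)\text{ is nonstationary}\}$ agrees with $\mathrm{NS}_{\omega_1}$ below some positive set; that is clear when a maximal antichain of null stationary sets has size $\leq\aleph_1$ (take the complement of its diagonal union), but in general it leans on $\aleph_2$-saturation of $\mathrm{NS}_{\omega_1}$. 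Since \textbf{SRP} implies \textbf{NSSAT} this is available to you, but it needs to be said; as written the reduction is not a free ``standard argument.'' The paper's chain-based proof avoids this issue entirely, establishing stationarity of $\{\alpha:N_\alpha\cap\kappa\in\mathcal{S}\}$ directly from the stationarity of $\{M\prec H(\theta):M\cap\kappa\in\mathcal{S}\}$ and the defining property of the chain.
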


\begin{proof}
	We use an equivalent formulation of \textbf{SRP} due to Feng and Jech \cite{FJ}.
	\vspace{.cm}
	
	\noindent
	\begin{minipage}[t]{1cm}
		\textbf{SRP}
	\end{minipage}
	\begin{minipage}[t]{12.5cm}
		For every cardinal $\kappa$ and every $S\subseteq[\kappa]^\omega$, for every regular $\theta>\kappa$, there is a continuous elementary chain $\{N_\alpha:\alpha\in\omega_1\}$ (with $N_0$ containing some given element of $H(\theta)$, e.g. $S$) such that for all $\alpha$, $N_\alpha\cap\kappa\in S$ if and only if there is a countable $M\prec H(\theta)$ such that $N_\alpha\subseteq M$, $M\cap\omega_1=N_\alpha\cap\omega_1$, and $M\cap\kappa\in S$.
	\end{minipage}\newline
	
	\vspace{.3cm}

			Let $\mathcal{S}$ and $\mathcal{C}$ be as in \textbf{Axiom R}. Choose $\theta$ sufficiently large so that $\mathcal{S},\mathcal{C}\in H(\theta)$ and so that $\theta^{\aleph_1}=\theta$. Let $\{\mathcal{S},\mathcal{C}\}\in N_0$ and let $\{N_\alpha:\alpha\in\omega_1\}$ be as in \textbf{SRP}. By induction on $\alpha\in\omega_1$, choose $Y_\alpha\in C\cap N_{\alpha+1}$ so that $\bigcup (\mathcal{C}\cap N_\alpha)\subseteq Y_\alpha$. Then $\{Y_\alpha:\alpha\in\omega_1\}$ is an increasing chain in $\mathcal{C}$. Therefore $Y=\bigcup_{\alpha\in\omega_1}(N_\alpha\cap\kappa)$ is in $\mathcal{C}$.
			
			$\mathcal{S}^+=\{M\prec H(\theta):M\cap \kappa\in\mathcal{S}\}$ is a stationary subset of $[H(\theta)]^\omega$. This is proved in the same way as 1) of Claim 1.12 on page 196 of \cite{S}. Since $\{N_\alpha:\alpha\in\omega_1\}$ is an element of $H(\theta)$, there is an $M\in\mathcal{S}^+$ such that $\{N_\alpha:\alpha\in\omega_1\}\in M$. Let ${M\cap\omega_1=\delta}$. Obviously $M\cap\kappa\in\mathcal{S}$, and, by continuity, $N_\delta\subseteq M$ and $M\cap\omega_1= N_\delta\cap\omega_1$. It then follows from \textbf{SRP} that $N_\delta\in\mathcal{S}$.
			
			This actually proves that $\{\alpha\in\omega_1:N_\alpha\cap\kappa\in\mathcal{S}\}$ is a stationary subset of $\omega_1$, because we could have put any cub of $\omega_1$ as an element of $M$. Now assume that $\mathfrak{Z}\subseteq[Y]^\omega$ is a cub of $[Y]^\omega$. Choose a strictly increasing $g:\omega_1\to\omega_1$ such that for each $\alpha$, there is a $Z_\alpha\in \mathfrak{Z}$ such that $N_\alpha\cap \kappa\subseteq Z_\alpha\subseteq N_{g(\alpha)}$. If limit $\delta$ satisfies that $g(\alpha)<\delta$ for all $\alpha<\delta$, then we have that $N_\delta\cap\kappa\in\mathfrak{Z}$. This finishes the proof that $\mathcal{S}\cap[Y]^\omega$ is stationary.
\end{proof}


\begin{theorem}
	Suppose we have a model with a Souslin tree $S$ in which \textbf{Axiom R} holds. Then, after forcing with $S$, \textbf{\text{Axiom R}} still holds.
\end{theorem}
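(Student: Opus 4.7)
The plan is to establish Axiom R in $V[G]$ by reducing to Axiom R in $V$, exploiting that $S$ is ccc of size $\aleph_1$. The two tools from $S$ being a Souslin tree that drive the argument are: (i) $|S|=\aleph_1$, which allows pigeonholing $\omega_1$-sequences of conditions to a constant; and (ii) ccc, which ensures that every countable subset of a $V$-set lying in $V[G]$ is covered by a countable $V$-set, and which preserves stationary subsets of $\omega_1$ and hence of $[Y]^{<\omega_1}$ for $|Y|=\aleph_1$.

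Given in $V[G]$ a stationary $\mathcal{S}\subseteq[X]^{<\omega_1}$ and a tight unbounded $C\subseteq[X]^{<\omega_2}$ with names $\dot{\mathcal{S}},\dot{C}\in V$, I would pull them back to $V$ by defining
\[
\mathcal{S}^{*}=\{N\in[X]^{<\omega_1}:\exists p\in S,\ \exists \dot N',\ p\Vdash \dot N'\subseteq\check N\wedge \dot N'\in\dot{\mathcal{S}}\}
\]
and
\[
C^{*}=\{Y\in[X]^{<\omega_2}:\Vdash_{S}\check Y\in\dot C\}.
\]
Stationarity of $\mathcal{S}^{*}$ in $V$ follows from a covering argument: given a $V$-club generated by $f\in V$, pick $N'\in\mathcal{S}$ closed under $f$ inside $V[G]$, cover $N'$ by a countable $N_0\in V$ using ccc, close $N_0$ under $f$ to obtain $N\in V$, and read off a witnessing condition $p\in G$ with a name $\dot N'$ for $N'$. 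Tightness of $C^{*}$ in $V$ follows from pigeonhole on $|S|=\aleph_1$ applied to the $\omega_1$-cofinal sequence of witnessing conditions, combined with tightness of $\dot C$ below the resulting constant condition.

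The main obstacle I expect is unboundedness of $C^{*}$ in $V$. Given $Y_0\in V$ of size $\aleph_1$, I must produce $Y\supseteq Y_0$ in $V$ of size $\aleph_1$ with every condition forcing $\check Y\in\dot C$. My plan is a transfinite recursion in $V$ of length $\omega_1$ building $S$-names $\langle \dot Y_\alpha:\alpha<\omega_1\rangle$, each of size $\aleph_1$, each forced to lie in $\dot C$ and to extend the previous ones (using the maximum principle and unboundedness of $\dot C$), and then setting $Y=\bigcup_\alpha\mathrm{supp}(\dot Y_\alpha)$. Every condition $p$ forces that the generic evaluations form a tight $\omega_1$-chain in $\dot C$ whose union is contained in $\check Y$; a pigeonhole reduction (using $|S|=\aleph_1$) to a cofinal subset on which the witnessing condition is constant, together with tightness of $\dot C$, then forces $\check Y\in\dot C$.

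Applying Axiom R in $V$ to $\mathcal{S}^{*},C^{*}$ produces $Y\in C^{*}$ such that $\mathcal{S}^{*}\cap\mathcal{P}(Y)$ is stationary in $[Y]^{<\omega_1}$. In $V[G]$, $Y\in C$ is automatic from the definition of $C^{*}$. For stationarity of $\mathcal{S}\cap\mathcal{P}(Y)$ in $V[G]$, I would use that ccc forcing preserves $V$-stationary subsets of $[Y]^{<\omega_1}$ when $|Y|=\aleph_1$ (via the bijection $Y\leftrightarrow\omega_1$ and ccc-preservation of stationary subsets of $\omega_1$), then split $\mathcal{S}^{*}\cap\mathcal{P}(Y)$ by its witnessing condition $p\in S$ (only $\aleph_1$ possibilities) to find a stationary piece $\mathcal{S}^{*}_{p}$ for every $p$, so that for $p\in G$ each $N\in\mathcal{S}^{*}_{p}$ yields $(\dot N')^G\in\mathcal{S}\cap\mathcal{P}(N)\subseteq\mathcal{S}\cap\mathcal{P}(Y)$; density of conditions admitting such splittings then transfers stationarity into $V[G]$.
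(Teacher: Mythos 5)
Your overall strategy coincides with the paper's: pull $\dot{C}$ and $\dot{\mathcal{S}}$ back to ground-model objects $C^{*}$ and $\mathcal{S}^{*}$, apply \textbf{Axiom R} in $V$, and push the resulting $Y$ forward. Your treatment of $C^{*}$ is essentially the paper's (one detail: in the recursion you must force $\dot{Y}_{\alpha+1}$ to contain the entire support of $\dot{Y}_\alpha$, not merely $\dot{Y}_\alpha$ itself, so that the generic union of the chain is exactly $Y$; otherwise tightness of $\dot{C}$ only places the generic union --- possibly a proper subset of $Y$ --- into $\dot{C}$, and no pigeonhole repairs this). Also, since a Souslin tree is $\omega$-distributive and adds no new countable sets, the ``$\dot{N}'\subseteq\check{N}$'' clause in your $\mathcal{S}^{*}$ is an unnecessary complication that hurts you later: the sets $\dot{N}'$ need not be closed under the functions generating a given club, so stationarity of $\mathcal{S}^{*}$ does not obviously transfer to stationarity of the collection of $\dot{N}'$'s. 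The paper simply takes $\mathcal{S}^{*}=\{x:\exists s\,(s\Vdash \check{x}\in\dot{\mathcal{S}})\}$.

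The genuine gap is in the final transfer step. First, what you are trying to prove --- that for the single $Y$ produced in $V$, $\mathcal{S}\cap\mathcal{P}(Y)$ is stationary in $V[G]$ for the actual generic --- is stronger than what is true; one can only show that \emph{some} condition forces it, so the argument must be framed as a contradiction from a fixed $s_0$ forcing that no $Y\in\dot{C}$ works, with all witnessing conditions chosen above $s_0$. Second, and fatally, your splitting mechanism fails: the nonstationary ideal on $[Y]^{\omega}$ (equivalently on $\omega_1$) is only countably complete, so partitioning the stationary set $\mathcal{S}^{*}\cap\mathcal{P}(Y)$ into $\aleph_1$ pieces $\mathcal{S}^{*}_{p}$ by witnessing condition need not yield even one stationary piece, let alone one for every $p$; and preserving the $V$-stationarity of $\mathcal{S}^{*}\cap\mathcal{P}(Y)$ accomplishes nothing on its own, because the witnesses for membership in $\mathcal{S}^{*}$ need not lie in $G$. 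The paper's resolution uses the tree structure in an essential way: it replaces $[Y]^{\omega}$ by the canonical club $\{x_\delta:\delta<\omega_1\}$ (reducing everything to stationary subsets of $\omega_1$), forms the name $\dot{\mathcal{E}}=\{(x_\delta,s_\delta):\delta\in\mathcal{E}\}$ from the chosen witnesses, passes to a level $S_\gamma$ at which every node decides whether $\dot{\mathcal{E}}$ is stationary, and intersects the \emph{countably many} clubs attached to the nodes forcing nonstationarity --- countable completeness of the club filter suffices here precisely because Souslin levels are countable. Choosing $\delta\in\mathcal{E}$ in that intersection, any node of $S_\gamma$ comparable with $s_\delta$ must force $\dot{\mathcal{E}}$ (hence $\dot{\mathcal{S}}\cap[Y]^{\omega}$) stationary, giving the contradiction. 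Some such use of the countability of the levels (or an antichain argument) is needed; the $\aleph_1$-fold splitting you propose cannot substitute for it.
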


\begin{proof}
This is an improvement over \cite{LT2}, which required a stronger axiom, Axiom R$^{++}$, holding in the model. We will use \emph{t.u.b.} as an abbreviation for \emph{tight unbounded}. We must consider two  $S$-names:  $\dot{\mathcal{C}}$ and $\dot{\mathcal{X}}$ where
 $\dot{ \mathcal{C}}$ is forced to be a t.u.b. subset of $[\kappa]^{\omega_1}$ and
$\dot{ \mathcal{X}}$ is forced to be a stationary subset of $[\kappa]^\omega$.
Let us assume that some $s_0\in S$ forces there is no $Y$ in $\dot{ \mathcal{C}}$
such that $\dot{ \mathcal{X}}\cap [Y]^\omega$ is stationary. (It would make the
discussion below easier if we just assumed that $s_0$ was the root of
$S$ -- which one can certainly immediately do if $S$ is a coherent
Souslin tree.)

We first show that $\dot C$ contains a t.u.b.  $C$ from the ground
model. Simply put $Y\in \mathcal{C}$ if every $s\in S$ forces that $Y\in \dot{ \mathcal{C}}$. It
is clear that $\mathcal{C}$ is closed under increasing $\omega_1$-chains. Thus
we just have to show that it is unbounded. Let us enumerate $S$ as
 $\{ s_\alpha : \alpha\in \omega_1\}$. Fix any $Y_0\in
[\kappa]^{\omega_1}$. By recursion choose an increasing chain
$\{Y_\alpha : \alpha\in \omega_1\}$ so that for each $\alpha$,
$\bigcup \{Y_\beta  : \beta < \alpha\}\subseteq Y_\alpha$ and 
 there is an extension $s_\beta$ of $s_\alpha $
 forcing that $Y_{\alpha+1}\in  \dot{ \mathcal{C}}$. This we may do, since
  $s_\alpha$ forces that $\dot{ \mathcal{C}}$ is unbounded. Now let $Y$ be the
 union of the chain $\{ Y_\alpha : \alpha\in \omega_1\}$. Note that
 for each $s\in S$ and each $\beta\in \omega_1$, there is an
 $\beta<\alpha$ such that $s_\alpha$ is an extension of $s$.
 It follows that $s$ forces that $\dot{ \mathcal{C}}\,\cap \{ Y_\alpha : \alpha \in
 \omega_1\}$ is uncountable, hence $s\Vdash Y\in \dot{ \mathcal{C}}$.

 Now we let $\mathcal{X}$ be the set of $x\in [\kappa]^\omega$ such that there
 is some $s\in S$ extending $s_0$
with $s\Vdash x\in \dot{ \mathcal{X}}$. It is clear that $\mathcal{X}$ is
 a stationary subset of $[\kappa]^\omega$ because $s_0$ forces that $\mathcal{X}$
 meets every cub. Now apply \textbf{Axiom R} to choose $Y\in \mathcal{C}$ so that
 $\mathcal{X}\cap [Y]^\omega$ is  a stationary subset of $Y$.

 Now we obtain a contradiction (and thus a proof)
 by showing that
  there is an extension  $s\in S$ of $s_0$ that forces that
$\dot{ \mathcal{X}}\cap [Y]^\omega$ is stationary.
 Let $\{ y_\alpha :\alpha\in \omega_1\}$ be an enumeration
 of $Y$. Let $\mathcal{E}$ be the set of $\delta\in \omega_1$ such that
 $x_\delta = \{y_\alpha :\alpha\in \delta\}\in \mathcal{X}$. Notice that
 $\{ \{ y_\alpha : \alpha \in \delta \} : \delta\in \omega_1\}$ is
 a cub in $[Y]^\omega$. Thus it follows that $\mathcal{E}$ is stationary.
 In fact, if $\mathcal{E}'$ is any stationary subset of  $\mathcal{E}$,
 then $\mathcal{E}'$ is also a stationary subset of $[Y]^\omega$.
 
 For each $\delta\in \mathcal{E}$ choose $s_\delta\in S$ above $s_0$
so that
 $s_\delta\Vdash x_\delta\in \dot{ \mathcal{X}}$ (as per the definition of
 $ \mathcal{X}$).  Now we have a name $\dot{ \mathcal{E}} = \{ (x_\delta, s_\delta) :
\delta\in \omega_1\}$. We prove that there is some $s\in S$
above $s_0$ 
that
 forces that $\dot{ \mathcal{E}}$ is stationary. Thus such an $s$ forces
 that $\dot{ \mathcal{X}}\cap [Y]^\omega$ is stationary as required.

Let $s_0$ be on level $\alpha_0$ of $S$.
 There is a  $\gamma>\alpha_0$ so that
each member of $S_\gamma$
decides if $\dot{ \mathcal{E}}$ is stationary. 
 Also, for each $\bar s\in S_\gamma$ that forces
 $\dot{ \mathcal{E}}$ is not stationary, there is a cub $\mathcal{C}_{\bar
   s}$ of $\omega_1$ that $\bar s$ forces is disjoint
 from $\dot{ \mathcal{E}}$. Choose any $\delta $ in the intersection of those
 countably many cubs that is also in $\mathcal{E}$. Clearly if
 $\bar s\in S_\gamma$ is compatible with $s_\delta$, then
 $\mathcal{C}_{\bar s}$ did not exist since $\bar s \cap s_\delta$
 would   force that $\delta \in
 \mathcal{C}_{\bar s}\cap  \dot{ \mathcal{E}}$. This completes the proof, since that element $\bar s$
 is above $s_0$ and forces that $\dot{ \mathcal{X}}\cap [Y]^\omega$ is stationary.
\end{proof}


\begin{cor}
	\textup{MM}$(S)[S]$ implies \textbf{\textup{Axiom R}}.
\end{cor}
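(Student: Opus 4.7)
The plan is to chain together the lemmas immediately preceding the corollary with the theorem that \textbf{Axiom R} is preserved by forcing with a Souslin tree. Start in a ground model $V$ satisfying $\mathrm{MM}(S)$, where $S$ is the distinguished coherent Souslin tree. By the lemma attributed to Miyamoto, $\mathrm{MM}(S)$ implies \textbf{SRP}, and by the lemma just proved (using the Feng--Jech reformulation), \textbf{SRP} implies \textbf{Axiom R}. Hence \textbf{Axiom R} holds in $V$, and in particular it holds in a model containing the Souslin tree $S$ in which \textbf{Axiom R} holds.

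Now apply the preservation theorem just established: if \textbf{Axiom R} holds in a model containing a Souslin tree $S$, then forcing with $S$ preserves \textbf{Axiom R}. Therefore in $V^S$, which is exactly what $\mathrm{MM}(S)[S]$ refers to, \textbf{Axiom R} continues to hold. This is precisely the content of the corollary.

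There is essentially no obstacle to overcome here beyond the assembly of the previously established implications. The genuine work has already been done in the two preceding results: the implication $\textbf{SRP}\Rightarrow\textbf{Axiom R}$ (which used the Feng--Jech continuous elementary chain formulation of \textbf{SRP} to extract $Y=\bigcup_{\alpha\in\omega_1}(N_\alpha\cap\kappa)\in\mathcal{C}$ and show $\mathcal{S}\cap[Y]^\omega$ is stationary), and the preservation theorem, in which one first shows the $S$-name $\dot{\mathcal{C}}$ contains a ground-model t.u.b.\ subset $\mathcal{C}$, defines $\mathcal{X}$ as the set of $x\in[\kappa]^\omega$ that some $s\geq s_0$ forces into $\dot{\mathcal{X}}$, applies ground-model \textbf{Axiom R} to obtain $Y\in\mathcal{C}$ with $\mathcal{X}\cap[Y]^\omega$ stationary, and then uses a level-$\gamma$ decision argument to locate a condition above $s_0$ forcing $\dot{\mathcal{X}}\cap[Y]^\omega$ to be stationary. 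The corollary itself is then a one-line citation of these two results.
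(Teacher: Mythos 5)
Your proposal is correct and is exactly the argument the paper intends: combine Miyamoto's lemma ($\mathrm{MM}(S)\Rightarrow\textbf{SRP}$) with the preceding lemma ($\textbf{SRP}\Rightarrow\textbf{Axiom R}$) to get \textbf{Axiom R} in the ground model, then invoke the preservation theorem for forcing with $S$. Nothing is missing.
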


We next need:

\begin{lemma} [P. Larson]\label{larson}
	Suppose
	\begin{itemize}
		\item[(1)]\textbf{\textup{NSSAT}}, and
		\item[(2)] for sufficiently large $\theta$ and stationary $E\subseteq\omega_1$, for any $X\in H(\theta)$, there is a Chang model $M$ with $M\cap\omega_1\in E, X\in M$ and $|M\cap\omega_2|=\aleph_1$.
	\end{itemize}
	Then if $\{A_\alpha:\alpha<\omega_2\}$ are stationary subsets of $\omega_1$, $M\cap\omega_1=\delta$ is in uncountably many $A_\alpha,\alpha\in M$.
\end{lemma}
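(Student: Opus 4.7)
I would argue by contradiction. Suppose there exist a sequence $\langle A_\alpha : \alpha < \omega_2\rangle$ of stationary subsets of $\omega_1$ and a stationary $E \subseteq \omega_1$ such that for every Chang model $M$ (provided by condition (2)) containing this sequence and with $\delta_M := M \cap \omega_1 \in E$, the set $T_M := \{\alpha \in M \cap \omega_2 : \delta_M \in A_\alpha\}$ is countable.

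A preliminary observation: the set $E_0 := \{\delta \in \omega_1 : |\{\alpha < \omega_2 : \delta \in A_\alpha\}| = \aleph_2\}$ is stationary. Otherwise a club $C \subseteq \omega_1$ would avoid $E_0$, so each $\delta \in C$ yields $|S_\delta| \leq \aleph_1$ where $S_\delta := \{\alpha : \delta \in A_\alpha\}$; the ordinal $\gamma := \sup\{\sup S_\delta : \delta \in C\}$ would then lie below $\omega_2$ by regularity of $\omega_2$, and for $\alpha > \gamma$ we would have $A_\alpha \cap C = \emptyset$, contradicting stationarity of $A_\alpha$. So we may assume $E \subseteq E_0$.

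The main step is to use \textbf{NSSAT} to promote the pointwise largeness $|S_\delta| = \aleph_2$ for $\delta \in E$ into the existence of a single $\aleph_2$-sized $\Sigma \subseteq \omega_2$ and a stationary $S^* \subseteq E$ with $S^* \subseteq A_\alpha$ (modulo a later nonstationary correction) for every $\alpha \in \Sigma$. The natural tool is the generic ultrapower attached to the saturated ideal $\mathrm{NS}_{\omega_1}$: NSSAT implies that forcing with $P(\omega_1)/\mathrm{NS}$ yields an elementary embedding $j : V \to N \subseteq V[G]$ with $\mathrm{crit}(j) = \omega_1$ and $j(\omega_1) = \omega_2^V$. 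The generic point $\delta_G = [\mathrm{id}]_G$ satisfies $\delta_G \in j(A_\alpha)$ iff $A_\alpha \in G$, so the $V[G]$-set $\Sigma_G := \{\alpha < \omega_2^V : A_\alpha \in G\}$ has cardinality $\aleph_2^V$. Since the forcing is $\aleph_2$-c.c., one can read off from a name for $\Sigma_G$ a ground-model $\Sigma \subseteq \omega_2$ of size $\aleph_2$ together with a stationary $S^* \subseteq E$ realizing the required containment.

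With $\Sigma$ and $S^*$ in hand, apply condition (2) to the stationary $S^*$ and the parameter $X := \{\langle A_\alpha\rangle, \Sigma, S^*\} \in H(\theta)$ to obtain a Chang $M$ with $\delta := M \cap \omega_1 \in S^*$ and $X \in M$. Since $\Sigma \in M$ has cardinality $\aleph_2$ and $|M \cap \omega_2| = \aleph_1$, an internal bijection $g : \omega_2 \to \Sigma$ in $M$ gives $|M \cap \Sigma| \geq |g[M \cap \omega_2]| = \aleph_1$; and for each $\alpha \in M \cap \Sigma$ we have $\delta \in S^* \subseteq A_\alpha$, so $T_M \supseteq M \cap \Sigma$ is uncountable, contradicting our assumption. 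The chief obstacle is the main step: translating the abstract $\aleph_2$-c.c.\ of $P(\omega_1)/\mathrm{NS}$ into a single concrete $\Sigma$ with stationary reflection requires care, because this Boolean algebra is only $\sigma$-complete and the naive iterated-intersection construction of common refinements fails at countable limits.
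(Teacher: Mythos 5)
Your reduction to the final elementary-submodel step is sound, but the ``main step'' on which everything rests has a genuine gap, and in fact two separate problems. First, the assertion that $\Sigma_G=\{\alpha<\omega_2^V: A_\alpha\in G\}$ has cardinality $\aleph_2^V$ does not follow from ``$\delta_G\in j(A_\alpha)$ iff $A_\alpha\in G$'' and is false for an arbitrary generic $G$: if $A_0=T$ is stationary co-stationary and every $A_\alpha$ with $\alpha>0$ is a stationary subset of $\omega_1\setminus T$ (there are $2^{\aleph_1}=\aleph_2$ pairwise inequivalent such sets), then the condition $[T]$ forces $\Sigma_G=\{0\}$. The statement only becomes true below a suitably chosen condition, essentially $E=\inf_{\beta<\omega_2}\sup\{[A_\alpha]:\alpha>\beta\}$, which is stationary because the decreasing sequence of sups is eventually constant by the $\aleph_2$-c.c. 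Second, and more seriously, the target of your main step --- a single stationary $S^*$ with $S^*\subseteq A_\alpha$ mod $\mathrm{NS}_{\omega_1}$ for $\aleph_2$ many $\alpha$ --- is a caliber-type property of $\mathcal{P}(\omega_1)/\mathrm{NS}_{\omega_1}$ that does not follow from the $\aleph_2$-chain condition (the Cohen algebra is c.c.c.\ yet contains $\aleph_2$ conditions no infinite subfamily of which has a nonzero common lower bound), and you give no argument for it; you explicitly flag it as the unresolved ``chief obstacle.'' Incidentally, your stated worry is misplaced: under \textbf{NSSAT} the quotient is a \emph{complete} Boolean algebra (this is the first line of the paper's proof); the obstruction is not completeness but caliber.

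The paper's proof shows how to avoid needing any such $S^*$ and $\Sigma$. Since $E\leq\sup\{[A_\beta]:\beta\in(\alpha,\omega_2)\}$ for every $\alpha$, and since by saturation every sup is the sup of an $\aleph_1$-sized subfamily, realized in this algebra as a \emph{diagonal union}, one gets for each $\alpha$ an $\eta(\alpha)>\alpha$ and $\aleph_1$ many indices in $(\alpha,\eta(\alpha))$ whose diagonal union includes $E$ mod $\mathrm{NS}_{\omega_1}$; this yields a club $C\subseteq\omega_2$ each of whose intervals carries such a family. A Chang model $M$ containing all this data, with $\delta=M\cap\omega_1\in E$, meets $\aleph_1$ many of these intervals, and for each interval $J$ the nonstationary set $E\setminus\nabla\mathcal{F}_J$ is coded by a club in $M$, so $\delta\in\nabla\mathcal{F}_J$; membership in a diagonal union, unlike containment, requires only $\delta\in F_\gamma$ for a \emph{single} $\gamma<\delta$, which hands you one index $\xi\in J\cap M$ with $\delta\in A_\xi$ per interval, hence uncountably many in total. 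Replacing your demand ``one condition below $\aleph_2$ many $A_\alpha$'' by ``$E$ below the diagonal union of $\aleph_1$ many $A_\alpha$ in each of cofinally many blocks'' is exactly the missing idea.
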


\begin{proof}
	It is well known that $\mr{NS}_{\omega_1}$ is $\aleph_1$-complete, since the diagonal union of $\aleph_1$ non-stationary subsets of $\omega_1$ is non-stationary. It follows that $\mathcal{P}(\omega_1)/\mr{NS}_{\omega_1}$ is a complete Boolean algebra, because (1) says it satisfies the $\aleph_2$-chain condition. Since it is complete, for each $\alpha<\omega_2$ there is a stationary $B_\alpha$ which is the sup of $\{A_\beta:\beta\in(\alpha,\omega_2)\}$. Let $E$ be the inf of the family of $B_\alpha$'s. By saturation, $E$ is really the inf of an $\aleph_1$-sized family, and so is itself stationary. Given any $\alpha\in\omega_2$, we can find an $\eta(\alpha)>\alpha$ such that the diagonal union of $\{A_\beta:\beta\in(\alpha,\eta(\alpha))\}$ includes $E$, mod $\mr{NS}_{\omega_1}$. It follows that there is a cub $C\subseteq \omega_2$ such that for each $\alpha\in C$, there is a subset of $\{A_\beta:\beta\in(\alpha,\alpha^+)\}$ of cardinality $\aleph_1$ with diagonal union including $E$, mod $\mr{NS}_{\omega_1}$, where $\alpha^+$ denotes the next element of $C$ after $\alpha$.
	
	Now let $M$ be an elementary submodel of a suitable $H(\theta)$, with $\langle A_\alpha:\alpha<\omega_2\rangle$, $E$, and $C\in M$ and $\delta=M\cap\omega_1\in E$, $|M\cap\omega_2|=\aleph_1$. We claim $\delta$ is an element of uncountably many $A_\alpha,\alpha\in M$.
	
	Since the cub $C$ divides $\omega_2$ into $\aleph_2$ disjoint intervals, $C\hspace{.03cm}\cap M$ divides $\omega_2\,\cap M$ into $\aleph_1$ disjoint intervals. Choose any one of these intervals $J$. There is a family $\mathcal{F}_J=\{F_\gamma:\gamma<\omega_1\}$ in $M$ consisting of $A_\alpha$'s indexed in the interval $J$, with diagonal union including $E$, mod $\mr{NS}_{\omega_1}$. Then there is a cub $D_J$ in $M$ disjoint from $E\setminus\nabla\mathcal{F}_J$. $D_J\cap M$ is unbounded in $M$, so $\delta=M\cap\omega_1\in D_J$, so $\delta\notin E\setminus\nabla\mathcal{F}_J$. Then $\delta\in \nabla\mathcal{F}_J$ so $\delta\in F_\gamma$ for some $\gamma\in M\cap\omega_1$ and therefore $\delta$ is in some $A_\xi$ with $\xi\in J$.
\end{proof}

We shall finish the proof that MM$(S)[S]$ implies \textbf{LCN}$(\aleph_1)$ in Section 4, but first let us note another advantage of stating MM$(S)[S]$ as a hypothesis is that we can often avoid front-loading to get collectionwise Hausdorffness, since \textbf{Axiom R} provides enough reflection. For example,

\begin{thm}\label{312}
	\textup{MM}$(S)[S]$ implies a locally compact, hereditarily normal space is hereditarily paracompact if and only if it does not include a copy of $\omega_1$.
\end{thm}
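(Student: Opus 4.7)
The plan is to observe that MM$(S)[S]$ supplies, in any model where it holds, all the combinatorial ingredients that the proof of Theorem \ref{thm:paracompactcopyallmodels} used in the specifically front-loaded PFA$(S)[S]$ model of \cite{LT2}. In any MM$(S)[S]$ model we have available: (i) \textbf{Axiom R}, by the corollary stated just above; (ii) $\PPI$, since MM$(S)[S]$ extends PFA$(S)[S]$ and PFA$(S)[S]$ implies $\PPI$; and (iii) \textbf{LCN}$(\aleph_1)$, by Theorem \ref{thm41}, whose hypotheses \textbf{NSSAT}, \textbf{SCC}, and $2^{\aleph_1}=\aleph_2$ are verified in the MM$(S)$ ground model via the chain MM$(S) \Rightarrow \textbf{SRP} \Rightarrow \textbf{NSSAT}\wedge\textbf{SCC}\wedge 2^{\aleph_1}\leq\aleph_2$ recorded earlier.

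First I would promote \textbf{LCN}$(\aleph_1)$ to full collectionwise Hausdorffness for locally compact normal spaces. Given a closed discrete $D\subseteq X$ of cardinality $\kappa>\aleph_1$ in a locally compact normal $X$, Watson's character reduction expands $D$ to a discrete family of compact sets each admitting an outer base of size $\leq\kappa$. The separation problem is then reflected to size $\aleph_1$ by applying \textbf{Axiom R} to a tight unbounded family in $[\kappa]^{<\omega_2}$ of candidate partial separations of such expansions, against a stationary family in $[\kappa]^{<\omega_1}$ encoding the obstructions to separation; \textbf{LCN}$(\aleph_1)$ applied at the reflected $\aleph_1$-sized level then contradicts the assumed failure, and transfinite induction on $\kappa$ closes the upgrade.

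Second, with full collectionwise Hausdorffness and \textbf{Axiom R} both in hand, I would run the argument of Theorem \ref{thm:paracompactcopy} in the arbitrary MM$(S)[S]$ model to conclude that a locally compact, hereditarily normal space not including a perfect pre-image of $\omega_1$ is hereditarily paracompact. Third, as in the derivation of Theorem \ref{thm:paracompactcopyallmodels} from Theorems \ref{thm:paracompactcopy} and \ref{thm34}, Lemma \ref{lxb} together with $\PPI$ replaces ``perfect pre-image of $\omega_1$'' by ``copy of $\omega_1$''. The reverse implication is immediate, since $\omega_1$ is locally compact and hereditarily normal but not paracompact.

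The main obstacle is the first step. The bootstrap from $\aleph_1$-collectionwise Hausdorffness to the full version must be executed through \textbf{Axiom R} rather than through Cohen subsets installed by front-loading, and the reflection must be set up to respect Watson's outer-base reduction. Concretely, one needs a tight unbounded $\mc{C}\subseteq[\kappa]^{<\omega_2}$ whose members carry the outer bases of the compact expansion, together with a stationary $\mc{S}\subseteq[\kappa]^{<\omega_1}$ encoding obstructions to separation; verifying that $\mc{P}(Y)\cap\mc{S}$ is stationary in $[Y]^{<\omega_1}$ for some $Y\in\mc{C}$, and then extracting from \textbf{LCN}$(\aleph_1)$ a separation of the reflected configuration that actually lifts back to $X$, is where the real work lies.
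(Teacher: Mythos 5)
Your overall skeleton is right: reduce ``copy of $\omega_1$'' to ``perfect pre-image of $\omega_1$'' via Theorem \ref{thm34} and $\PPI$, then invoke the hereditary-paracompactness characterization for spaces with no perfect pre-image of $\omega_1$. That is exactly how the paper proceeds. But your first step --- promoting \textbf{LCN}$(\aleph_1)$ to full collectionwise Hausdorffness in an \emph{arbitrary} MM$(S)[S]$ model by an Axiom R reflection argument --- is a genuine gap, not just a deferred technicality. You acknowledge that ``the real work lies'' there without carrying it out, and in fact the paper records precisely this upgrade as an open problem: it asks whether MM$(S)[S]$ implies \textbf{LCN}, and indeed whether MM implies locally compact $\aleph_1$-collectionwise Hausdorff spaces are collectionwise Hausdorff. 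The paper's corollary asserting full \textbf{LCN} is only for a particular front-loaded model, obtained by adding Cohen subsets of regular cardinals before forcing with $S$, not by reflection from Axiom R. So your proposed bootstrap cannot simply be asserted; if it worked as described it would resolve that problem.

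The fix is that full collectionwise Hausdorffness is not needed. The paper's proof of this theorem observes that, once the space is assumed not to include a perfect pre-image of $\omega_1$, the argument in \cite{T} establishing hereditary paracompactness uses only P-ideal Dichotomy, $\bsigma$, $\aleph_1$-collectionwise Hausdorffness, and \textbf{Axiom R}; the reflection to uncountable cardinalities is performed by Axiom R \emph{inside} the paracompactness argument rather than by first upgrading the collectionwise Hausdorff hypothesis. All four ingredients follow from MM$(S)[S]$ (P-ideal Dichotomy already from PFA$(S)[S]$, \textbf{Axiom R} from \textbf{SRP} and its preservation under forcing with $S$, and \textbf{LCN}$(\aleph_1)$ from Theorem \ref{thm41}). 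You should also note that your step invoking ``the argument of Theorem \ref{thm:paracompactcopy}'' silently omits P-ideal Dichotomy and $\bsigma$, which that argument requires; they are available here, but they need to be named and sourced.
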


\begin{proof}
	As usual, we may assume the space does not include a perfect pre-image of $\omega_1$. The proof for that case in \cite{T} uses \textit{P-ideal Dichotomy}, $\bsigma$, $\aleph_1$-collectionwise Hausdorffness, and \textbf{Axiom R}. We can get all of these from MM$(S)[S]$. (Todorcevic \cite{To} proved that PFA$(S)[S]$ implies P-ideal Dichotomy; a proof was published in \cite{D2}.)
\end{proof}

Similar considerations enable us to prove:

\begin{thm}\label{thm312}
	\textup{MM}$(S)[S]$ implies a locally compact, normal, countably tight space is paracompact if and only if its separable closed subspaces are Lindel\"of, and it does not include a copy of $\omega_1$.
\end{thm}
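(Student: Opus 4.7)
The plan is to proceed in close parallel with the proof of Theorem \ref{312} just given. The ``only if'' direction is immediate: $\omega_1$ is not paracompact, and any separable paracompact space is Lindel\"of, so both conditions are necessary.

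For the converse, let $X$ be locally compact, normal, countably tight, with Lindel\"of separable closed subspaces, and containing no copy of $\omega_1$. Since MM$(S)[S]$ implies PFA$(S)[S]$, Theorem \ref{thmConj2} applies: any countably tight perfect pre-image of $\omega_1$ would itself include a copy of $\omega_1$, so $X$ cannot contain a perfect pre-image of $\omega_1$ either. This is the reduction that was not available in the hereditarily normal case \ref{312}, where PPI (and Theorem \ref{thm34}) was used instead; here it falls out directly from \ref{thmConj2}.

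Now $X$ is a locally compact, normal, countably tight space with no perfect pre-image of $\omega_1$ and with Lindel\"of separable closed subspaces. The plan is to invoke the argument of \cite{T} used to establish Theorem \ref{thm:paracompactcountablytight} for exactly this configuration (just as the remark following \ref{thm38} asserts the proofs are essentially the same). That argument consumes four ingredients: the P-ideal Dichotomy, $\bsigma$, $\aleph_1$-collectionwise Hausdorffness of locally compact normal spaces (LCN$(\aleph_1)$), and \textbf{Axiom R}. Each is a theorem of MM$(S)[S]$: PID through Todorcevic's result that PFA$(S)[S]$ implies PID (published in \cite{D2}); $\bsigma$ from Section 5 below; LCN$(\aleph_1)$ from Theorem \ref{thm41}; and \textbf{Axiom R} from the corollary above that MM$(S)[S]$ implies \textbf{Axiom R}. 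Consequently, the proof transplants to any MM$(S)[S]$ model, with no need to specify a particular front-loaded extension.

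The main obstacle is verifying that $\aleph_1$-collectionwise Hausdorffness really suffices in place of the full collectionwise Hausdorffness that the \cite{T} argument was originally written against (where front-loading supplied the latter). This is exactly the advantage of MM$(S)[S]$ flagged in the paragraph preceding Theorem \ref{312}: the extra reflection provided by \textbf{Axiom R} allows $\aleph_1$-CWH to be bootstrapped wherever full CWH was invoked in \cite{T}, and so no front-loading is required.
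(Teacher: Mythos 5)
Your proposal is correct and follows essentially the same route the paper intends: the paper disposes of Theorem \ref{thm312} with the single remark ``Similar considerations enable us to prove,'' meaning exactly your argument --- use \ref{thmConj2} to rule out perfect pre-images of $\omega_1$ in the countably tight setting, then run the proof of \ref{thm:paracompactcountablytight} from \cite{T}, drawing P-ideal Dichotomy, $\bsigma$, \textbf{LCN}$(\aleph_1)$, and \textbf{Axiom R} from MM$(S)[S]$ so that no front-loading is needed. Your write-up is in fact more explicit than the paper's.
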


We thank Paul Larson for Lemma \ref{larson} and several discussions concerning the material in this section. Next, we need to do some topology.

\section{Getting locally compact normal spaces collectionwise Hausdorff}

\begin{lem}
	Let $X$ be a locally compact normal space and suppose $Y$ is a closed discrete subspace of $X$ of size $\aleph_1$. Then there is a locally compact normal space $X'$ with a closed discrete subspace $Y'$ of size $\aleph_1$, such that if $Y'$ is separated in $X'$, then $Y$ is separated in $X$, but each point in $Y'$ has character $\leq\aleph_1$.
\end{lem}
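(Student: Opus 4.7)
The plan is to apply Watson's character-reduction idea: replace each $y_\alpha$ by a compact set $K_\alpha \subseteq X$ of countable outer character in $X$, and then form the quotient $X'$ that collapses each $K_\alpha$ to a single point $y'_\alpha$, so that the outer character of $K_\alpha$ in $X$ becomes the character of $y'_\alpha$ in $X'$.

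First I would construct the $K_\alpha$'s. Since $Y$ is closed discrete, $Y \setminus \{y_\alpha\}$ is closed, so normality supplies an open $W_\alpha \ni y_\alpha$ with $\overline{W_\alpha} \cap Y = \{y_\alpha\}$; local compactness then gives an open $U_\alpha \subseteq W_\alpha$ containing $y_\alpha$ with $\overline{U_\alpha}$ compact; and Urysohn provides $f_\alpha\colon X \to [0,1]$ with $f_\alpha(y_\alpha) = 0$ and $f_\alpha \equiv 1$ on $X \setminus U_\alpha$. Setting $K_\alpha := f_\alpha^{-1}(\{0\}) \subseteq \overline{U_\alpha}$ yields a compact zero-set, hence a compact $G_\delta$ in $X$, with $K_\alpha \cap Y = \{y_\alpha\}$ and an outer base of cardinality $\aleph_0$ in $X$.

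Assuming the family $\{K_\alpha\}$ is pairwise disjoint (and in fact locally finite), I would set $X' := X/{\sim}$, where $x \sim x'$ iff $x = x'$ or $x, x' \in K_\alpha$ for a common $\alpha$; write $\pi \colon X \to X'$ for the quotient map and $y'_\alpha := \pi(K_\alpha)$. Each fiber of $\pi$ is compact and $\pi$ is closed (using local finiteness), so $\pi$ is perfect and $X'$ inherits Hausdorffness, normality, and local compactness from $X$. The countable outer base of $K_\alpha$ in $X$ descends to a countable neighborhood base of $y'_\alpha$ in $X'$, yielding $\chi(y'_\alpha, X') \le \aleph_0 \le \aleph_1$. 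The set $Y' = \{y'_\alpha : \alpha < \omega_1\}$ is closed discrete in $X'$, since $\pi(U_\alpha)$ meets $Y'$ only at $y'_\alpha$ and the preimage of $X' \setminus Y'$ is open. Finally, any pairwise disjoint family $\{V'_\alpha\}$ of open sets in $X'$ with $y'_\alpha \in V'_\alpha$ pulls back under $\pi$ to a pairwise disjoint family $\{\pi^{-1}(V'_\alpha)\}$ in $X$ separating $Y$, so separation of $Y'$ implies separation of $Y$.

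The main obstacle is arranging local finiteness (or even pairwise disjointness) of $\{K_\alpha\}$ — precisely the condition that fails for a closed discrete family in a non-collectionwise-Hausdorff space. The naive Urysohn construction may produce overlapping $K_\alpha$'s, or $K_\alpha$'s accumulating at points outside $Y$, which would either identify distinct $y'_\alpha$'s in the quotient or prevent $\bigcup_\alpha K_\alpha$ from being closed. I would attempt a transfinite recursion in which at stage $\alpha$ I choose $K_\alpha$ to avoid the countably many previously chosen $K_\beta$'s; the subtlety — and presumably the point at which the earlier argument of \cite{T3} went astray — is that a countable intersection of open neighborhoods of $y_\alpha$ is only $G_\delta$, not open, so without first countability the $K_\beta$'s may accumulate at $y_\alpha$ and block the recursion. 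Overcoming this, perhaps by refining the quotient construction to tolerate controlled overlap or by exploiting the closed discreteness of $Y$ in a subtler way than pairwise disjointness of compact $G_\delta$ neighborhoods, is where I expect the real technical work of the correction to lie.
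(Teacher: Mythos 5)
Your overall architecture --- expand each $y_\alpha$ to a compact set $K_\alpha$ of small outer character and collapse each $K_\alpha$ to a point by a perfect quotient --- is exactly the paper's, and your treatment of the quotient step (perfectness, preservation of normality and local compactness, descent of outer bases to neighbourhood bases, pullback of separations) is fine. But there is a genuine gap, and it is the one you yourself flag: you never produce a \emph{discrete} family $\{K_\alpha\}$, and the transfinite recursion you propose for arranging it is not the right tool. The paper disposes of this in one line by citing Watson's character-reduction lemma \cite{W}, whose key move is a \emph{single global} application of normality rather than a recursion: choose open $U_\alpha\ni y_\alpha$ with $\overline{U_\alpha}$ compact and $\overline{U_\alpha}\cap Y=\{y_\alpha\}$, use normality to find an open $V$ with $Y\subseteq V\subseteq\overline{V}\subseteq\bigcup_\alpha U_\alpha$, and set $K_\alpha=\bigl(\overline{V}\cap\overline{U_\alpha}\bigr)\setminus\bigcup_{\beta\neq\alpha}U_\beta$. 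Each $K_\alpha$ is compact and contains $y_\alpha$, and the family is discrete: a point of some $U_\beta$ has $U_\beta$ as a neighbourhood meeting only $K_\beta$, while a point outside $\bigcup_\beta U_\beta$ lies outside $\overline{V}$ and so has $X\setminus\overline{V}$ as a neighbourhood missing every $K_\alpha$. Watson then shows each such $K_\alpha$ has an outer base of size $\leq\aleph_1$ (indexed essentially by the finite subsets of $\omega_1\setminus\{\alpha\}$ used to cut $K_\alpha$ out of $\overline{V}\cap\overline{U_\alpha}$).

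The second problem is that you are aiming at the wrong target. You try to make each $K_\alpha$ a compact zero-set with a \emph{countable} outer base, whereas the lemma only claims character $\leq\aleph_1$, and that weaker bound is precisely what Watson's construction delivers (the sets $K_\alpha$ above are carved out by $\aleph_1$-many conditions, so one cannot expect better than an outer base of size $\aleph_1$). Insisting on a discrete expansion by compact $G_\delta$'s is not merely unnecessary: it is essentially the strategy of \cite{T3} whose failure this paper is correcting, and Section~6 of the paper exhibits a consistent example of a normalized closed discrete set in a locally compact space of character $\aleph_1$ admitting no normalized discrete expansion by compact $G_\delta$'s. The whole point of the corrected argument in Section~4 is to cope with points of character $\aleph_1$ rather than $\aleph_0$, so relaxing your target is not a loss --- it is what makes the discreteness obstacle, which you correctly identify as the crux, disappear.
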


\begin{proof}
	By Watson's character reduction technique \cite{W}, there is a discrete collection of compact subsets of $X$, $\mathcal{K}=\{K_y:y\in Y\}$, such that $y\in K_y$, and each $K_y$ has character $\leq\aleph_1$. Let $X'$ be the quotient of $X$ obtained by collapsing each $K_y$ to a point $y'$. This collapse is a perfect map, so preserves normality and local compactness, and it is clear that $\{y':y'\in Y\}$ is separated if and only if $\{K_y:y\in Y\}$ is separated, and that $Y$ is separated if $\{K_y:y\in Y\}$ is.
\end{proof}

\begin{lem}\label{lem47}
	Suppose $X$ is a locally compact normal space of Lindel\"of degree $\aleph_1$ with an uncountable closed discrete subspace. Then there is a continuous image of $X$ of weight $\aleph_1$ enjoying the same properties.
\end{lem}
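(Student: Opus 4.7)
The plan is to realize the desired continuous image as the restriction of a diagonal map on the one-point compactification $X^+ = X\cup\{\infty\}$ into a Tychonoff cube of weight $\aleph_1$. Without loss of generality assume $|Y|=\aleph_1$. Since $X$ is locally compact with $L(X)=\aleph_1$, fix a cover $\{U_\alpha : \alpha<\omega_1\}$ of $X$ by open sets with compact closures $K_\alpha=\ov{U_\alpha}$. Using normality of the compact Hausdorff space $X^+$, for each $\alpha$ pick a continuous $f_\alpha : X^+\to[0,1]$ with $f_\alpha(\infty)=0$ and $f_\alpha\equiv 1$ on $K_\alpha$. For each $y\in Y$, local compactness together with the closed discreteness of $Y$ produces an open set $W_y\ni y$ with $\ov{W_y}$ compact and $W_y\cap Y=\{y\}$; pick $g_y:X^+\to[0,1]$ with $g_y(y)=0$ and $g_y\equiv 1$ off $W_y$. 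Set $\mc F=\{f_\alpha:\alpha<\omega_1\}\cup\{g_y:y\in Y\}$, of cardinality $\aleph_1$.

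Now form $\Delta:X^+\to[0,1]^{\mc F}$ by $\Delta(x)=\langle f(x):f\in\mc F\rangle$, and let $K'=\Delta(X^+)$, $\infty'=\Delta(\infty)$, $X'=K'\setminus\{\infty'\}$. The image $K'$ is compact Hausdorff sitting inside $[0,1]^{\mc F}$, so its weight is at most $\aleph_1$. The functions $f_\alpha$ force $\Delta^{-1}(\infty')=\{\infty\}$, since any $x\in X$ lies in some $K_\alpha$, giving $f_\alpha(x)=1\ne 0$. Hence $X'=\Delta(X)$ is an open subset of the compact Hausdorff space $K'$, so $X'$ is locally compact Hausdorff of weight at most $\aleph_1$, and $\Delta|_X:X\to X'$ is a continuous surjection.

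The key step is to show that $\Delta|_X$ is a perfect map. Its fibers over points of $X'$ are closed in $X^+$ (as $X'$ is Hausdorff and $\Delta$ is continuous) and miss $\infty$, so they are compact subsets of $X$. For closedness, let $F\subseteq X$ be closed: if $F$ is compact, then $\Delta(F)$ is compact hence closed in $X'$; if $F$ is not compact, then $\ov{F}^{X^+}=F\cup\{\infty\}$, so $\Delta(F)\cup\{\infty'\}=\Delta(\ov{F}^{X^+})$ is closed in $K'$, and therefore $\Delta(F)=(\Delta(F)\cup\{\infty'\})\cap X'$ is closed in $X'$. Since a continuous closed surjection from a normal space to a Hausdorff space has normal image, $X'$ is normal.

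Finally, the $g_y$'s make $\Delta|_Y$ injective (as $g_y(y)=0$ and $g_y(y')=1$ for $y'\in Y\setminus\{y\}$), and the open set $\{z\in X':z(g_y)<\tfrac12\}$ meets $\Delta(Y)$ only in $\Delta(y)$, making $\Delta(Y)$ discrete in $X'$. Closedness of $\Delta(Y)$ in $X'$ follows from $\Delta|_X$ being a closed map. Continuity of $\Delta$ gives $L(X')\le L(X)=\aleph_1$, while the uncountable closed discrete $\Delta(Y)$ forces $L(X')\ge\aleph_1$, so $L(X')=\aleph_1$. The only subtle point is verifying that $\Delta|_X$ is closed, and this is handled cleanly by lifting to the compact space $X^+$ and noting that the closure of a non-compact closed subset of $X$ in $X^+$ adds only the point $\infty$.
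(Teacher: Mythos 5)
Your proof is correct. It pursues the same overall strategy as the paper---realize the image as the diagonal of $\aleph_1$-many $[0,1]$-valued functions attached to a cover by relatively compact open sets, landing in a cube of weight $\aleph_1$---but the mechanism for the one delicate step, closedness of the map (which is what delivers normality and the closed discreteness of the image of $Y$), is genuinely different. The paper refines the cover to cozero sets closed under finite intersections, each meeting at most one point of the discrete set, quotients by the induced equivalence relation, and proves closedness of the quotient map $\pi$ by a direct local argument whose key point is that $\pi^{-1}(\pi[U])=U$ for basic $U$, so $\overline{\pi^{-1}(\pi[U])}$ is compact and one can separate a point of $\overline{\pi[F]}$ from $\pi[F]$ inside $\pi[U]$. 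You instead pass to the one-point compactification $X^+$ and arrange, via the functions $f_\alpha$, that $\Delta^{-1}(\infty')=\{\infty\}$; closedness of $\Delta|_X$ then falls out of compactness of $X^+$, since the closure in $X^+$ of a noncompact closed subset of $X$ adds only $\infty$. This buys a cleaner closedness argument and dispenses with the cozero/finite-intersection bookkeeping, at the cost of adding the auxiliary functions $g_y$ to keep $\Delta[Y]$ uncountable and discrete (the paper achieves the same by refining the cover so each member meets at most one point of $D$). Both arguments are complete; the only cosmetic remark is that the lemma asserts weight exactly $\aleph_1$, which in either construction follows from the upper bound on weight together with the presence of an uncountable closed discrete subspace.
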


\begin{proof}
	Let $\mathcal{U}$ be an open cover of $X$ of size $\aleph_1$ with each member of $\mathcal{U}$ a cozero set with compact closure. Without loss of generality, assume that for each $x\in X$ there is a $U\in\mathcal{U}$ such that $x\in U$ and $U$ meets at most one element of a given closed discrete set $D$ of size $\aleph_1$. Also without loss of generality, assume $\mathcal{U}$ is closed under finite intersections. For each $U\in\mathcal{U}$, let $f_U:X\to[0,1]$ with $U=f^{-1}_U((0,1])$. Define an equivalence relation on $X$ by letting $x_0\!\!\sim\!\! x_1$ if $f_U(x_0)=f_U(x_1)$ for all $U\in\mathcal{U}$. Let $X/\!\!\sim$ be the quotient set, with $\pi:X\to X/\!\!\sim$ the projection. Topologize $X/\!\!\sim$ by taking as base all sets of form $\pi(U)$, $U\in\mathcal{U}$. Then $X/\!\!\sim$ is $\textup{T}_{3 \frac{1}{2}}$ and of weight $\leq\aleph_1$. To see the former, consider $X$ as embedded in $[0,1]^{C^*(X)}$ by $e(x)=(f(x))_{f\in C^*(X)}$. Let $p:[0,1]^{C^*(X)}\to[0,1]^{\{f_U:U\in\mathcal{U}\}}$ be given by $(x_f)_{f\in C^*(X)}\to(x_{f_U})_{U\in\mathcal{U}}$, i.e. $p$ projects onto only those coordinates in $C^*(X)$ which are $f_U$'s. Then $X/\!\!\sim\; =p\circ e(X)$.
	
	The projection map $\pi$ is closed, for let $F\subseteq X$ be closed and suppose $y\in\overline{\pi[F]}$. Claim $y\in\pi[F]$. $y\in\pi[U]$ for some $U\in\mathcal{U}$; note $\pi^{-1}(\pi[U])=U$ for if $\pi(x)\in\pi[U]$, $x\sim x_0$ for some $x_0\in U$. Then $f_V(x)=f_V(x_0)$ for every $V\in\mathcal{U}$. But $U=f_U^{-1}((0,1])$. Thus $f_U(x)=f_U(x_0)\in(0,1]$, which implies $x\in U$. So $\stackrel{ }{\overline{U}=\overline{\pi^{-1}\left(\pi[U]\right)}}$ is compact. Suppose $y\notin\pi[F]$. Then $y\notin\pi[F\cap\overline{U}]$, which is compact. Then $\pi[U]\setminus\pi[F\cap\overline{U}]$ is a neighborhood of $y$ disjoint from $\pi[F]$.
	
	Since $\pi$ is closed and $X$ is normal, $X/\!\!\sim$ is normal. It is clear that $\pi[D]$ is closed discrete. By continuity, $\pi[\overline{U}]\subseteq\overline{\pi[U]}$; $\pi[\overline{U}]$ is a closed set including $\pi[U]$, so including $\stackrel{ }{\overline{\pi[U]}}$, so $\pi[\overline{U}]=\overline{\pi[U]}$, so $X/\!\!\sim$ is covered by open sets with compact closures, so it is locally compact.
\end{proof}

\begin{lem}\label{lem48}
	In any model obtained by forcing with a Souslin tree $S$, any locally compact normal space with a dense Lindel\"of subspace has countable extent.
\end{lem}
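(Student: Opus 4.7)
My plan is to argue by contradiction in $V[b]$, where $b$ is $S$-generic for the Souslin tree $S \in V$. Suppose $X \in V[b]$ is locally compact normal with dense Lindel\"of subspace $D$ and closed discrete $E = \{e_\alpha : \alpha < \omega_1\}$. First I carry out a standard reduction: local compactness of $X$ together with Lindel\"ofness of $D$ yields countably many open sets $\{U_n : n < \omega\}$ with compact closures and $D \subseteq \bigcup_n U_n$, whence $X = \overline{\bigcup_n U_n}$. Each compact $\overline{U_n}$ meets the closed discrete $E$ in a finite set, so $E \cap \bigcup_n \overline{U_n}$ is countable; discarding these points we may assume $E$ is disjoint from the $\sigma$-compact core $K = \bigcup_n \overline{U_n}$, while every open neighborhood of every $e \in E$ still meets $K$ since $X = \overline{K}$.

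Next I would apply a Jones-style injection. For each $A \subseteq E$, normality of $X$ (and the usual shrinking) gives disjoint open $G_A \supseteq A$ and $H_A \supseteq E \setminus A$ with $\overline{G_A} \cap \overline{H_A} = \emptyset$. Density of $D$ forces $\overline{D \cap G_A} = \overline{G_A}$, and the standard Jones argument shows $A \mapsto D \cap G_A$ is injective on $\mathcal{P}(E)$; hence $2^{\aleph_1} \leq 2^{|D|}$ in $V[b]$. This puts $\aleph_1$-many pairwise ``incompatible'' open traces of $X$ inside the Lindel\"of space $D$.

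The third step, which is the main obstacle, is to exploit the Souslin tree forcing. The plan is to index the separations $\{G_A, H_A\}$ coherently along the levels of $S$, using the generic branch $b$ to single out a distinguished $\aleph_1$-family whose open traces in $D$ form an open cover of some piece of $D$ with no countable subcover, contradicting Lindel\"ofness; or, equivalently, to extract from such a bad cover an uncountable antichain in $S$ back in $V$, contradicting Souslinity. The key technical inputs are that $S$ is ccc of size $\aleph_1$ and that $S$-forcing adds no reals, so that names for countable subcovers in $V[b]$ reflect back to $V$.

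The real difficulty in the third step is bookkeeping with $S$-names, since neither $X$ nor $D$ need lie in $V$: one must translate the Lindel\"of statement about $D$ in $V[b]$, applied to a cover extracted from the $G_A$'s, into a combinatorial statement in $V$ about antichains of $S$, and the choice of indexing of the $A$'s along $b$ must be coordinated with the tree structure tightly enough for genericity to bite.
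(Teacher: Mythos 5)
There is a genuine gap: the entire engine of the proof is missing. Your step two (the Jones-lemma injection) is a dead end, because $D$ is only assumed Lindel\"of, not separable, so the inequality $2^{\aleph_1}\le 2^{|D|}$ carries no information; and even in the separable case one would need $2^{\aleph_0}<2^{\aleph_1}$, which forcing with a Souslin tree does not by itself provide and which the lemma does not assume. Your step three is where the actual proof would have to live, and it is only an announced intention; moreover the mechanism you propose --- extracting a cover of $D$ with no countable subcover, or an uncountable antichain of $S$, from the family $\{G_A\}_{A\subseteq E}$ --- is not how the Souslin tree is exploited, and it is unclear how the traces $D\cap G_A$ could ever threaten Lindel\"ofness of $D$ (note $E\cap D$ is countable to begin with).

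What the paper actually does is quite different. It first reduces to the case where $X$ has weight $\aleph_1$ (via Lemma \ref{lem47}, after using normality to cut down to a closed piece covered by $\aleph_1$ open sets with compact closures), then works in the ground model with $S$-names: a base $\{\dot B_\alpha:\alpha\in\omega_1\}$ of open sets with compact closures such that $\{\dot B_n:n\in\omega\}$ is forced to have dense union --- this countable dense family is precisely where the dense Lindel\"of subspace enters, and it is what later allows closures to be captured by finitely many basic sets indexed below $\delta$ via compactness and elementarity. The generic branch of $S$ is then used to define a \emph{generic partition} $\dot f$ of the closed discrete set into countably many pieces (each node at level $\xi^+(C_1)$ deciding $\dot f(\xi)$ by which immediate successor the branch takes), normality is invoked to separate the pieces by a discrete family $\{\dot W^1_j\}$ with shrinkings $\dot W^2_j$, and the contradiction is local: for $\delta=M\cap\omega_1$ with $M$ a suitable countable elementary submodel, some node $\bar s$ of height $\delta^+(C_1)$ already forces $\dot U(\delta,0)$ (which has compact closure) to miss $\dot W^1_j$ for all $j\ge J$, yet $\bar s$ has not decided $\dot f(\delta)$, and extending $\bar s$ so that $\dot f(\delta)=j\ge J$ forces $\delta\in\dot W^2_j$ and hence forces $\dot U(\delta,0)$ to meet sets it was already forced to miss. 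None of this generic-partition/normalization machinery, nor the role of the countable dense family, appears in your outline, so the proposal does not constitute a proof.
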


\begin{proof}
	Suppose $X_0$ is a locally compact normal space with an uncountable closed discrete subspace, which we may conveniently label as $\omega_1$, and a dense Lindel\"of subspace $L$. Via normality, we can find a closed subspace $X_1$ with $\omega_1$ in its interior which is covered by $\aleph_1$-many open sets with compact closures. Without loss of generality, we may assume $X_1=\overline{\mr{int}\,X_1}$. $L$ is dense in $\mr{int}\,X_1$, so $L\cap(\mr{int}\,X_1)$ is dense in $X_1$. Then $\stackrel{ }{\overline{L\cap \mr{int}\,X_1}\cap X_1}$ is a dense Lindel\"of subspace of $X_1$.
	
	Thus, without loss of generality, we may as well assume our original space $X_0$ has a cover by $\aleph_1$-many open sets, each with compact closure. Without loss of generality, we may assume each is a cozero set and indeed is $\sigma$-compact. By Lemma \ref{lem47}, there is a continuous image of $X_0$ --- call it $X$ --- which is also locally compact, normal, has an uncountable closed discrete subspace, and has weight $\aleph_1$. Since both density and Lindel\"ofness are preserved by continuous functions, $X$ also has a dense Lindel\"of subspace. Thus it suffices to find a contradiction for the special case in which the weight of our space is $\aleph_1$.
	
	 For $\delta\in\omega_1$ and a cub $C\subseteq\omega_1$, let $\delta^+(C)$ denote the minimum element of $C$ greater than $\delta$. Without loss of generality, we may assume our cubs only consist of limit ordinals. For a cub $C$, we use $\mr{Fix}(C)$ to denote the set $\{\delta\in C:\text{order-type}(C\cap\delta)=\delta\}$. Let $S_\delta$ be the $\delta$th level of the Souslin tree.
	
	As usual, we work in the ground model and fix names $\dot{\mathcal{B}} = \{\dot{B}_\alpha:\alpha\in\omega_1\}$ for a base of $X$ consisting of open sets with compact closures. It is convenient to assume that $\{\dot{B}_n:n\in\omega\}$ is forced to have dense union. Again, we let $\omega_1$ label a closed discrete subspace and let $\{\dot{U}(\alpha,\xi):\xi\in\omega_1\}$ be a subset of $\dot{\mathcal{B}}$ forced to be a local base at $\alpha$. Without loss of generality, assume each $B_n$ is disjoint from the closed discrete set $\omega_1$. Fix a cub $C_0$ such that for each $\delta\in C_0$ and each $s\in S_\delta$, $s$ decides all equations of the form $\dot{B}_\alpha\cap\dot{B}_\beta =\emptyset$, for $\alpha,\beta<\delta$. Also assume that for each $s\in S_\delta\;(\delta\in C_0)$ and each $\xi,\beta\in\delta$, there is an $\alpha\in\delta$ such that $s$ forces that $\dot{U}(\xi,\beta)=\dot{B}_\alpha$.
	
	It is convenient to assume that $S$ is $\omega$-branching (specifying any infinite maximal antichain above each element would serve the same purpose). We can use $C_1=\mr{Fix}(C_0)$ to define a partition $\dot{f}$ of $\omega_1$ so that for each $\xi\in\omega_1$ and each $s\in S_{\xi^+(C_1)}$, $s^\smallfrown j$ forces that $\dot{f}(\xi)=j$. Now we choose two (names of) functions $\dot{h}_1$ and $\dot{h}_2$ witnessing normality as follows:
	\begin{itemize}
		\item[(1)] For each $j\in\omega$ and each $i\in 2$, let $\dot{W}^i_j=\bigcup\{\dot{U}(\xi,h_i(\xi)):\xi\in \dot{f}^{-1}(j)\}$,
		\item[(2)] the $\{\dot{W}^1_j:j\in\omega\}$ form a discrete family,
		\item[(3)] the closure of $\dot{W}^2_j$ is included in $\dot{W}^1_j$.
	\end{itemize}
	
	Choose any countable elementary submodel $M$ with all the above as members of $M$, such that $\delta = M\cap\omega_1$ is an element of $C_1$. We know that there is a name of an integer $\dot{J}_\delta$ satisfying that it is forced that $\dot{U}(\delta,0)\cap\dot{W}_j$ is empty for all $j\geq\dot{J}_\delta$. Choose any $s\in S$ of height at least $\delta^+(C_1)$ that decides a value $J$ for $\dot{J}_\delta$. Let $\bar{s} = s\upharpoonright\delta^+(C_1)$. Notice that $\bar{s}$ decides the truth value of the equation ``$\dot{U}(\delta,0)\cap\dot{B}_\alpha=\emptyset$'', for all $\alpha\in M$. For each $n,j\in\omega$, $s$ and hence $s\upharpoonright\delta$ forces that the closure of $\dot{W}^2_j\cap \dot{B}_n$ is included in $\dot{W}^1_j$. By elementarity and compactness, this implies there is a finite $\dot{F}_{j,n}\subseteq\delta$ such that $s\upharpoonright\delta$ forces that $\dot{W}^2_j\cap \dot{B}_n\subseteq\bigcup\{\dot{B}_\eta:\eta\in \dot{F}_{j,n}\}\subseteq\dot{W}^1_j$. But now $\bar{s}$ forces $\dot{U}(\delta,0)\cap(\bigcup\{\dot{B}_\eta:\eta\in \dot{F}_{j,n}\})$ is empty for all $n$ and all $j\geq J$.
	
	On the other hand, fix any $j\geq J$ and consider what
        $\bar{s}^\smallfrown j$ is forcing. This forces that
        $\dot{f}(\delta)=j$ and that $\delta\in W^2_j$, and so
        $\delta$ is in the closure of the union of the sequence
        $\{\dot{U}(\delta,0)\cap(\bigcup\{\dot{B}_\eta:\eta\in
        F_{j,n}\}):n\in\omega\}$. This is a contradiction. 
\end{proof}

\begin{cor}\label{cor410}
	In any model obtained by forcing with a Souslin tree, if $X$ is locally compact normal, $D$ is a closed discrete subspace of $X$ of size $\aleph_1$ and $\{U_\alpha:\alpha\in\omega_1\}$ are open sets with compact closures, then for any countable $T\subseteq\omega_1$, $\stackrel{}{\overline{\bigcup\{U_\alpha:\alpha\in T\}}}\cap\; D$ is countable.
\end{cor}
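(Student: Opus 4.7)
The plan is to reduce this directly to Lemma~\ref{lem48} (in any model obtained by forcing with a Souslin tree, every locally compact normal space with a dense Lindel\"of subspace has countable extent) by picking out the right closed subspace of $X$ to apply it to.

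First I would set $Y := \overline{\bigcup\{U_\alpha : \alpha \in T\}}$ and $W := \bigcup\{\overline{U_\alpha} : \alpha \in T\}$. Since $Y$ is closed in $X$, it inherits both local compactness and normality from $X$. Because $T$ is countable and each $\overline{U_\alpha}$ is compact, $W$ is $\sigma$-compact and hence Lindel\"of. The chain of inclusions $\bigcup_{\alpha\in T} U_\alpha \subseteq W \subseteq Y$, together with the fact that $\bigcup_{\alpha\in T}U_\alpha$ is dense in $Y$ by definition of $Y$, exhibits $W$ as a dense Lindel\"of subspace of $Y$.

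With those ingredients assembled, Lemma~\ref{lem48} applies to $Y$ and says $Y$ has countable extent. Since $D$ is closed and discrete in $X$, the trace $D\cap Y$ is closed and discrete in $Y$, and is therefore countable. This is exactly the conclusion required, since $\overline{\bigcup\{U_\alpha : \alpha \in T\}}\cap D = Y\cap D$.

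The only subtle point, and the reason this cannot be dispatched in ZFC alone, is that one should not claim $Y$ itself is $\sigma$-compact: in general $\overline{\bigcup_{\alpha\in T}U_\alpha}$ can be strictly larger than $\bigcup_{\alpha\in T}\overline{U_\alpha}$, so the elementary fact that a Lindel\"of space has countable extent does not suffice. It is genuinely the strength of Lemma~\ref{lem48} --- allowing the Lindel\"of subspace to be merely dense --- that is being used here, and that is what imports the hypothesis that we have forced with a Souslin tree.
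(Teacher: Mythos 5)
Your proof is correct and is essentially the paper's own argument: the paper likewise observes that $\bigcup\{\overline{U_\alpha}:\alpha\in T\}$ is a dense Lindel\"of (indeed $\sigma$-compact) subspace of the closed --- hence locally compact normal --- set $\overline{\bigcup\{U_\alpha:\alpha\in T\}}$ and then applies Lemma~\ref{lem48}. Your added remark about why one cannot simply claim the closure is $\sigma$-compact correctly identifies where the strength of Lemma~\ref{lem48} is actually used.
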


\begin{proof}
	${\bigcup\{{\overline{U}_\alpha}:\alpha\in T\}}$ is dense in $\overline{\bigcup\{U_\alpha:\alpha\in T\}}$, which is locally compact normal.
\end{proof}

Getting back to the proof of \ref{thm41}, let us assume we are in a
model of $\mr{MM}(S)$ and that we have an $S$-name $\dot{X}$ for a
locally compact normal space, with a closed discrete subspace labeled
as $\omega_1$, with each of its points having character
$\aleph_1$. Let us note that it follows from character reduction and Lemma \ref{LT1} 
that if there is a discrete expansion of $\omega_1$ into compact
$G_\delta$'s, then $\omega_1$ will have a separation. In fact,
 even more, it is shown in \cite[Theorem 12]{T3} that if $\omega_1$
is forced to have an expansion by compact $G_\delta$'s that is 
 $\sigma$-discrete, then $\omega_1$ will be separated. Since our proof
 is by contradiction, we will henceforth 
assume that it is forced (by the root of $S$)
that there is no expansion of $\omega_1$ into a $\sigma$-discrete
family of compact $G_\delta$'s.
 
For each $\xi,\alpha\in\omega_1$, let
$\dot{U}(\xi,\alpha)$ be the name of the $\alpha$th neighbourhood from
a local base for $\xi$ with $\dot U(\xi,0)$ forced to have compact
closure.  Corollary \ref{cor410}, and the fact that $S$ is ccc,
 ensure that for each $\delta\in
\omega_1$, every element of $S$ forces that 
$\omega_1 \cap \overline{\bigcup\{\dot{U}(\xi,0):\xi<\delta\}}$ 
is bounded by $\gamma$ for some $\gamma\in \omega_1$.
Therefore there is a  cub $C_0$ such that 
without loss of generality, we can assume that each of the
following is forced by each element of $S$:
\begin{enumerate}
\item  for each $\delta\in C_0$, $\omega_1\cap
\stackrel{}{\overline{\bigcup\{\dot{U}(\xi,0):\xi<\delta\}}}$ 
is included in  $\delta^+(C_0)$,
\item for all $\beta\neq \xi$ in $\omega_1$, $\beta\notin \dot{U}(\xi,0)$,
\item  for all $\xi,\alpha\in \omega_1$ 
  $\dot{U}(\xi,\alpha)\subseteq \dot{U}(\xi,0)$ and has compact closure,
\item  for each limit $\delta\in \omega_1$, the sequence
  $\{\dot{U}(\xi,\alpha):\alpha<\delta\}$ is a 
\emph{regular filter}, i.e. each finite intersection of these includes
the closure of another. 
\end{enumerate}

For an $S$-name $\dot{h}$ of a function from $\omega_1$ to $\omega_1$,
let $\dot{U}(\xi,\dot{h})$ stand for $\dot{U}(\xi,\dot{h}(\xi))$. For
limit $\delta$, let $\dot{Z}(\xi,\delta)$ denote the $S$-name of the
compact $G_\delta$ equal to
$\bigcap\{\dot{U}(\xi,\alpha):\alpha<\delta\}$. For a cub $C$ and
ordinal $\xi$, we also use $\dot{Z}(\xi,C)$ as an abbreviation for
$\dot{Z}(\xi,\xi^+(C))$. 

Fix an enumeration $\{C_\gamma:\gamma\in\omega_2\}$ 
for a base for the
cubs on $\omega_1$ (each containing only limit ordinals), chosen so
that $C_0$  is as above and for $0<\lambda \in \omega_2$,
$C_\lambda \subseteq \mr{Fix}(C_0)$ and
 $C_\lambda\setminus\mr{Fix}(C_\gamma)$ is countable for all
$0\leq\gamma<\lambda$. We can do this by taking diagonal intersections,
since \textbf{SRP} implies $2^{\aleph_1}=\aleph_2$.

For each $\delta\in C_0$, let $\beta(\delta) = \delta^+(C_0)$.
 Since $\dot{Z}(\xi,C_\gamma)\subseteq
\dot{U}(\xi,C_\gamma)$  for all $\xi\in \omega_1$
for all  $\delta\in C_\gamma$, 
$\beta(\delta)<\delta^+(C_\gamma)$, and so
 it is forced that:
\begin{equation*}
\overline{\bigcup\{\dot{Z}(\xi,C):\xi<\delta\}}
\cap\omega_1\subseteq\beta(\delta).   
\end{equation*}

We can also assume that for all cubs $C\subseteq C_0$, there is an
 $S$-name $\dot A$, that is forced to be a
stationary subset of 
$\mr{Fix}(C)$ satisfying:
\begin{equation*}
	(\forall  s\in S)(\forall \delta)
~~ s\Vdash \left(\delta\in \dot A \ \Rightarrow
 (\exists\alpha\in[\delta,\beta(\delta)])\;
\alpha\in\overline{\bigcup\{\dot{Z}(\xi,C):\xi<\delta\}}~\right).
\end{equation*}

The reason we can make this assumption is
that we are assuming there is no $\sigma$-discrete expansion of
 $\omega_1$ by compact $G_\delta$'s. If, in the extension, the 
set $A = \{ \delta : 
\overline{\bigcup\{\dot{Z}(\xi,C):\xi<\delta\}} \not\subseteq \delta\}$
were not stationary, then there would be a $\lambda\in \omega_2$
such that $A\cap C_\lambda$ is empty.
Since the cub $C_\lambda$ divides $\omega_1$ into
countable pieces, we see that we can expand the points in $\omega_1$
into a $\sigma$-discrete collection of compact $G_\delta$'s.

For each $\lambda \in \omega_2$, let $\dot A_\lambda$ denote the 
name of the stationary set just described. 
For any $B\subseteq\omega_1$, we will write
\begin{equation*}
	\alpha\in\langle\dot{Z}(\xi,C):\xi<\delta\rangle'
\end{equation*}
to mean that $\alpha$ is a limit point of that sequence of sets.

Fix any function $e:S\to\omega$ with the property that for all
$\delta\in\omega_1$, $e\restriction S_\delta$ is one-to-one. For an ordinal
$\gamma\in\omega_2$, we use $\dot{f}_\gamma$ for the $S$-name of the
function from $\omega_1$ into $\omega$ given by the property that each
$s\in S_{\xi^+(C_\gamma)}$ forces that
$\dot{f}_\gamma(\xi)=e(s)$. Thus $\dot{f}_\gamma$ partitions
$\omega_1$ into a discrete collection of countably many closed
subsets. Then let $\{\dot{W}(\gamma,n):n\in\omega\}$ be a discrete
collection of open sets separating the $\dot{f}_\gamma^{-1}(n)$'s. Fix
$n\in\omega$. By normality, there is an open $\dot{V}_n$ such that $S$
forces
$\dot{f}^{-1}_\gamma(n)\subseteq\dot{V}_n\subseteq\dot{\overline{V}}_n\subseteq\dot{W}(\gamma,n)$. For
each $\xi\in\dot{f}^{-1}_\gamma(n)$, there is an
$\alpha_\xi\in\omega_1$ such that $S$ forces
$\dot{U}(\xi,\alpha_\xi)\subseteq\dot{V}_n$. Let
$\zeta_n(\gamma)\in\omega_2$ be such that for
$\xi\in\dot{f}^{-1}_\gamma(n), \xi<\rho\in C_{\zeta_n(\gamma)}$
implies $\alpha_\xi<\rho$. Then $S$ forces $\{\dot{Z}(\xi,
C_{\zeta_n(\gamma)}):\xi\in\dot{f}_\gamma^{-1}(n)\}\subseteq\dot{V}_n$. We
then can find a $C_{\zeta(\gamma)}$ included in each
$C_{\zeta_n(\gamma)}$ such that for every $n\in\omega$, $S$ forces
$\{\dot{Z}(\xi,C_{\zeta(\gamma)}):\zeta\in\dot{f}^{-1}_\gamma(n)\}\subseteq
\dot{V}_n$. Thus 
\begin{equation*}
	\overline{\bigcup\{\dot{Z}(\xi,C_{\zeta(\xi)}):\xi\in f_\gamma^{-1}(n)\}}
\subseteq\dot{W}(\gamma,n).
\end{equation*}
Then we can get a $\zeta(\gamma)$ that works for all $n$.

By recursion on $\gamma\in\omega_2$, we can choose
$\zeta(\gamma)\geq\gamma$ as above, so that the sequence
$\{\zeta(\gamma):\gamma\in\omega_2\}$ is strictly increasing. 
For each
$\gamma$, we have the $S$-name
 $\dot A_{\zeta(\gamma)}$ as above.
It is immediate that $A_\gamma = \{ \delta :
 (\exists s\in S) s\Vdash \delta\in \dot A_{\zeta(\gamma)}\}$ is 
a stationary set. In other words, $\delta \in A_\gamma$ 
implies there is some $s\in S$ and $\eta \in [\delta,\beta(\delta)]$
such that 
 $s\Vdash \eta \in 
\langle\dot{Z}(\xi,
C_{\zeta(\gamma)}):\xi\in\delta\rangle'$.

By \textbf{SCC} and \ref{larson} we may assume there is an elementary
submodel $M$ of some $\langle H(\theta),\{\langle
\gamma,\zeta(\gamma), A_\gamma\rangle:\gamma\in\omega_2 \}\rangle$,
with $M\cap\omega_1=\delta<\omega_1$, $|M\cap\omega_2|=\aleph_1$, and
an uncountable $\{\gamma_\alpha:\alpha\in\omega_1\}\subseteq
M\cap\omega_2$, so that $\delta\in A_{\gamma_\alpha}$ for all
$\alpha\in\omega_1$. 

For each $\alpha\in\omega_1$ choose $s_{\alpha}\in S$,
 $\eta_\alpha\in[\delta,\beta(\delta)]$ 
such that $s_\alpha\Vdash
\eta_\alpha\in\langle\dot{Z}(\xi,C_{\zeta(\gamma_\alpha)}):
\xi\in\delta\rangle'$. We 
may assume $s_\alpha$ is on a level at least as high as
$\delta^+(C_{\gamma_\alpha})$. We may also assume that if
$\alpha<\beta\in\omega_1$, then $\gamma_\alpha<\gamma_\beta$. We may
also assume that the height of $s_\alpha$ is less than the height of
$s_\beta$, for $\alpha<\beta$, so that
$\{s_\alpha:\alpha\in\omega_1\}$ is an uncountable subset of
$S$. Therefore  there is an $\eta\in [\delta,\beta(\delta)]$
such that $L = \{ \alpha : \eta_\alpha = \eta\}$ is uncountable.
Also, as is well-known for Souslin trees, 
there is an $\bar{s}\in S$, such that
$\{s_\alpha :\alpha\in L\}$ 
includes a dense subset of $\{s\in S:\bar{s}<s\}$. By passing to an
uncountable subset, we may assume that $\bar{s} <s_\alpha$ for all
$\alpha\in L$ and that $\bar{s}$ is on a level
above $\delta$. Similarly  we may assume that
 for all $\xi,\rho<\delta$, $\bar{s}$ has
decided  the statement
\begin{equation*}
	\dot{U}(\eta,0)\cap\dot{Z}(\xi,\rho)\neq\emptyset\quad
\text{ for all }\xi,\rho<\delta.
\end{equation*}
Now choose any $\alpha\in L$ (e.\;g.\;the least one), and then
choose an infinite sequence $\{\beta_l:l\in\omega\}\subseteq
L\setminus(\alpha+1)$ so that
$s_{\beta_l}\upharpoonright \delta^+({C_{\gamma_\alpha}})$
 are all distinct. For each
$l$, let $e(s_{\beta_l}\upharpoonright\delta^+({C_{\gamma_\alpha}})~)=n_l$. 
\newline
\newline
\noindent\textbf{Main Claim:} $\quad\bar{s}\Vdash
 (\forall
 l\in\omega)\left(\dot{W}(\gamma_\alpha,n_l)\cap\dot{U}(\eta,0)\neq
   0\right).$ 
\newline
\newline
Once this claim is proven we are done, because we then have that
$\bar{s}$ forces that $\dot{U}(\eta,0)$ cannot have compact closure,
because it meets infinitely many members of the discrete family
$\{\dot{W}(\gamma_\alpha,n):n\in\omega\}$. 

To prove the claim, first note that there is a tail of
$C_\zeta(\gamma_{\beta_l})\cap\delta$ included in
$C_{\zeta(\gamma_{\alpha})}$. To see this, recall
$C_{\zeta(\gamma_\alpha)}\setminus\mr{Fix}(C_\zeta(\gamma_{\beta_l}))$
is countable, so some tail of $\mr{Fix}(C_\zeta(\gamma_{\beta_l}))$ is
included in $C_{\zeta(\gamma_\alpha)}$. By elementarity, since
$\gamma_\alpha$ and $\gamma_\beta$ are in $M$, a tail of
$\mr{Fix}(C_\zeta(\gamma_{\beta_l}))\cap M$ is included in
$C_{\zeta(\gamma_\alpha)}\cap M$, so a tail of
$\mr{Fix}(C_\zeta(\gamma_{\beta_l}))\cap\delta$ is included in
$C_{\zeta(\gamma_\alpha)}$.

Since there is a tail of $C_{\zeta(\gamma_{\beta_l})}\cap \delta$
included in $C_{\zeta(\gamma_\alpha)}$, $\dot{Z}(\xi,
C_{\zeta(\gamma_{\beta_l})})\subseteq\dot{Z}(\xi,C_{\zeta(\gamma_\alpha)})$
for each $\xi<\delta$ (at least on a tail --- which is all that
matters for limits above $\delta$). Then $s_{\beta_l}$ forces that
$\eta $ is a limit of the sequence  
\begin{equation*}
	\langle\dot{Z}(\xi, C_{\zeta(\gamma_\alpha)}):
\xi\in\delta\text{ and }\dot{f}_{\gamma_\alpha}(\xi)= n_l\rangle.
\end{equation*}

Of course this means that $s_{\beta_l}$ forces that
$\dot{U}(\eta,0)$ meets $\dot{Z}(\xi, C_{\zeta(\gamma_\alpha)})$ for
cofinally many $\xi<\delta$ such that
$s_{\beta_l}\upharpoonright\gamma_\alpha\Vdash
\dot{f}_{\gamma_\alpha}(\xi)=n_l$. But $\bar{s}$ has already decided
the value of $\dot{f}_{\gamma_\alpha}\upharpoonright\delta$, and
$\bar{s}$ already forces $\dot{U}(\eta,0)\cap\dot{Z}(\xi,
C_{\zeta(\gamma_\alpha)})\neq\emptyset$ whenever $s_{\gamma_\beta}$
does. In particular then, $\bar{s}$ forces there is a $\xi$ with
$\dot{f}_{\gamma_\alpha}(\xi)=n_l$ (and so $\dot{Z}(\xi,
C_{\zeta(\gamma_\alpha)})\subseteq\dot{W}(\gamma_\alpha,n_l)$) and
$\dot{U}(\eta,0)\cap\dot{Z}(\xi,
C_\zeta(\gamma_\alpha))\neq\emptyset$.\qed

For the record, let us state what we have accomplished:

\begin{thm}
	\textup{MM}$(S)[S]$ implies \textbf{LCN}$(\aleph_1)$.
\end{thm}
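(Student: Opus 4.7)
The plan is to assemble this from the pieces already established in the section rather than doing new work. The statement is essentially a packaging of Theorem \ref{thm41} together with the chain of implications from MM$(S)$ down to the three hypotheses of that theorem, so what I would write is a short proof that traces this chain in the ground model, applies Theorem \ref{thm41}, and then transports the conclusion through the forcing by $S$.

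First I would fix a ground model $V$ of MM$(S)$, where $S$ is a coherent Souslin tree. In $V$, the lemma attributed to Miyamoto gives that MM$(S)$ implies \textbf{SRP}. From \textbf{SRP} I would then extract all three of the hypotheses of Theorem \ref{thm41}: the lemma quoted from Todorcevic gives \textbf{NSSAT} and $2^{\aleph_1}\le\aleph_2$ (and the reverse inequality is immediate since $S$ is Souslin and \textbf{SRP} prevents $2^{\aleph_1}=\aleph_1$ collapses, or simply because MM$(S)$ already implies $2^{\aleph_1}=\aleph_2$ by the quoted lemma of Todorcevic), while the lemma derived from Shelah's Namba-forcing argument, together with the fact that \textbf{SRP} implies \textbf{SR}, gives \textbf{SCC}.

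With $V$ now known to satisfy the hypotheses ``there is a Souslin tree $S$ and \textbf{NSSAT}, \textbf{SCC}, $2^{\aleph_1}=\aleph_2$ hold,'' Theorem \ref{thm41} applies directly and yields that $S$ forces locally compact normal spaces to be $\aleph_1$-collectionwise Hausdorff. Since MM$(S)[S]$ refers by definition to the extension $V[G]$ obtained by forcing over a model of MM$(S)$ with the coherent Souslin tree $S$, the statement \textbf{LCN}$(\aleph_1)$ holds in $V[G]$, which is exactly what the theorem claims.

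There is no real obstacle here; the only substantive work was already done in Theorem \ref{thm41}, whose proof occupies most of Section~4. The proof of the present theorem is a one-paragraph verification that MM$(S)$ supplies the three combinatorial inputs required, and then a citation of Theorem \ref{thm41}. If anything deserves a remark, it is just that the three ingredients \textbf{NSSAT}, \textbf{SCC}, and $2^{\aleph_1}=\aleph_2$ are each consequences of \textbf{SRP}, so the bookkeeping collapses to: MM$(S)\Rightarrow\textbf{SRP}\Rightarrow$(hypotheses of \ref{thm41})$\Rightarrow$\textbf{LCN}$(\aleph_1)$ after forcing with $S$.
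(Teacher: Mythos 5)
Your proposal is correct and matches the paper exactly: the theorem is stated there with the preamble ``for the record, let us state what we have accomplished,'' i.e.\ it is precisely the packaging of Theorem \ref{thm41} with the Section~3 chain MM$(S)\Rightarrow$ \textbf{SRP} $\Rightarrow$ \textbf{NSSAT}, \textbf{SCC}, and (via the quoted lemma of Todorcevic) $2^{\aleph_1}=\aleph_2$, followed by forcing with $S$. No further comment is needed.
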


\begin{cor}
	There is a model of \textup{MM}$(S)[S]$ in which \textbf{LCN} holds,  \emph{i.e. every locally compact normal space is collectionwise Hausdorff.}
\end{cor}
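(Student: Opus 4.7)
The plan is to front-load a model of $\mr{MM}(S)[S]$ following the recipe sketched in Section~2, so that the $\aleph_1$-CWH conclusion of Theorem~\ref{thm41} combines with Watson's character reduction to yield \textbf{LCN} in full.

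First I would start with a ground model containing a supercompact cardinal $\kappa$ and apply Laver's preparation so that $\kappa$ becomes indestructible under $\kappa$-directed closed forcing. Next, perform an Easton-style product adding, for each regular uncountable $\mu<\kappa$, $\mu^+$ Cohen subsets of $\mu$. Each factor is $\mu$-directed closed, so this preparation preserves the supercompactness of $\kappa$ while installing at every such $\mu$ the analogue of the Fleissner--Tall proposition quoted in Section~1: in the final extension, any normal space whose closed discrete subspace of size $\mu$ consists of points of character $\leq\mu$ has that subspace separated. Over this preparation, I would use Miyamoto's semi-proper iteration \cite{M} to force $\mr{MM}(S)$ while preserving a coherent Souslin tree $S$ (obtainable from $\diamondsuit$, which can be arranged during the preparation); finally, force with $S$ itself. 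The Cohen-subsets principle is preserved by the further forcing, since the iteration preserves stationary subsets of $\omega_1$ and the final tree forcing is ccc.

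In the resulting model, Theorem~\ref{thm41} yields \textbf{LCN}$(\aleph_1)$. To lift this to arbitrary uncountable regular $\kappa$, let $X$ be locally compact normal with a closed discrete $D\subseteq X$ of size $\kappa$. Watson's character reduction \cite{W} produces a discrete collection $\{K_y:y\in D\}$ of compact subsets of $X$ with $y\in K_y$ and each $K_y$ having an outer base of size $\leq\kappa$. Collapsing each $K_y$ to a point yields a perfect quotient $\pi:X\to X'$; the space $X'$ is locally compact and normal, and the image $\pi(D)$ is a closed discrete set in which every point has character $\leq\kappa$ in $X'$. The Cohen-subset installation separates $\pi(D)$ in $X'$, and pulling back under $\pi$ separates $D$ in $X$. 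Singular $\kappa$ are handled by the standard reduction to the regular case.

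The main obstacle will be verifying the forcing preservation along the entire tower: that Laver's preparation combined with the Easton product leaves the supercompact intact so that $\mr{MM}(S)$ can still be forced afterwards; that Miyamoto's iteration preserves both $S$ and the Cohen-subset character-reduction principle installed by the preparation; and that forcing with $S$ at the end does not undo these properties. Each of these facts is either classical or established in \cite{L}, \cite{M}, \cite{T2}, and \cite{LT1}, so the construction goes through.
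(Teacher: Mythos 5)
Your overall strategy is the one the paper intends (it only sketches this corollary, deferring to the front-loading recipe of \cite{LT1}): force MM$(S)$ over a suitably prepared ground model, force with $S$, get \textbf{LCN}$(\aleph_1)$ from Theorem \ref{thm41}, and climb to higher cardinals by Watson's character reduction together with the ``Cohen subsets of $\mu$'' separation principle, handling singular cardinals by Fleissner's standard reduction. But there is a genuine error in where you install the Cohen subsets. You add $\mu^+$ Cohen subsets of $\mu$ only for regular $\mu<\kappa$. The MM$(S)$ iteration collapses every cardinal in $(\aleph_1,\kappa)$ and makes $\kappa=\aleph_2$, so in the final model the uncountable regular cardinals above $\aleph_1$ are exactly the ground-model regulars $\geq\kappa$ --- precisely the cardinals at which your preparation installs nothing. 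The character-reduction step at a closed discrete set of size $\kappa'\geq\aleph_2$ of the final model therefore has no separation principle to appeal to. The correct preparation (as in \cite{LT1}, based on \cite{T2}) is the Easton product adding $\lambda^+$ Cohen subsets of each regular $\lambda\geq\kappa$.

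This also breaks your preservation argument for supercompactness: Laver indestructibility only covers $\kappa$-directed closed forcing, and your product below $\kappa$ contains factors such as $\operatorname{Add}(\omega_1,\omega_2)$ that are merely countably closed, so indestructibility does not apply to it. The product over regular $\lambda\geq\kappa$ \emph{is} $\kappa$-directed closed, which is exactly why the preparation is done there. Two smaller points: the justification ``the iteration preserves stationary subsets of $\omega_1$ and the final tree forcing is ccc'' is not what preserves the separation principle at a cardinal $\lambda\geq\kappa$ (what matters is that the subsequent forcing is small relative to $\lambda$, as handled in \cite{T2} and \cite{LT1}); and the inductive hypothesis of $<\!\kappa'$-collectionwise Hausdorffness should be made explicit in the character-reduction step, since both Watson's expansion and the singular-cardinal reduction use it. With the Easton product moved to $\lambda\geq\kappa$, the rest of your argument goes through as in the paper.
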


\section{Large Cardinals and the MOP}

In \cite{DT2} we showed that large cardinals are not required to obtain the consistency of every \textit{locally compact perfectly normal space is paracompact}. It is interesting to see which other PFA$(S)[S]$ results can be obtained without large cardinals. The standard method used was pioneered by Todorcevic in \cite{To3} and given several applications in \cite{D}, all in the context of PFA results. In the context of PFA$(S)[S]$, it is referred to in \cite{To} and actually carried out in \cite{DT1} for a version of {P-ideal Dichotomy} and for \textbf{\textup{PPI}}. It is routine to get additionally that such models are of form MA$_{\omega_1}(S)[S]$ by interleaving additional forcing. In \cite{DT2} we pointed out that such methods can give models in which in addition the following holds:

\vspace{.5cm}

\noindent
\begin{minipage}[t]{3.2cm}
\textbf{$\bsigma^{\bm{-}}$(sequential)}
\end{minipage}
\begin{minipage}[t]{10.3cm}
	In a compact sequential space, each locally countable subspace of size $\aleph_1$ is $\sigma$-discrete.
\end{minipage}

\vspace{.5cm}

A modification of such a proof produces a model in which the following proposition (see \cite{FTT}) holds:

\vspace{.5cm}
\noindent
\begin{minipage}[t]{3cm}
\textbf{$\bsigma$(sequential)}
\end{minipage}
\begin{minipage}[t]{10.5cm}
	Let $X$ be a compact sequential space. Let $Y\subseteq X$, $|Y|=\aleph_1$. Suppose $\{W_\alpha\}_{\alpha\in\omega_1}$, $\{V_\alpha\}_{\alpha\in\omega_1}$ are open subsets of $X$ such that:
	\begin{itemize}
		\item[(1)] $W_\alpha\subseteq\overline{W_\alpha}\subseteq V_\alpha,$
		\item[(2)] $|V_\alpha\cap Y|\leq\aleph_0$,
		\item[(3)] $Y\subseteq\bigcup\{W_\alpha:\alpha\in\omega_1\}$.
	\end{itemize}
	Then $Y$ is $\sigma$-closed discrete in $\bigcup\{W_\alpha:\alpha\in\omega_1\}$.
\end{minipage}
\vspace{.5cm}

Without the parenthetical ``sequential'', $\bsigma^-$ and $\bsigma$ refer to the corresponding propositions obtained by replacing ``sequential'' by countably tight'', which follow from their sequential versions if one has

\vspace{.5cm}

\noindent
\begin{minipage}[t]{3.3cm}
\textbf{Moore-Mr\'owka}
\end{minipage}
\begin{minipage}[t]{10.2cm}
	Every compact countably tight space is sequential.
\end{minipage}

\vspace{.5cm}

It follows easily from \textbf{Moore-Mr\'owka} that \emph{locally compact countably tight spaces are sequential}. A proof of \textbf{Moore-Mr\'owka} from PFA$(S)[S]$ is sketched in \cite{To} and the author remarks that, by the usual methods, large cardinals are not necessary. Thus, one can obtain a model of MA$_{\omega_1}(S)[S]$ in which, for example, both \textbf{PPI} and \textbf{$\bsigma$} hold, without the need for large cardinals. Working in such a model, we can establish the following proposition, the conclusion of which was proved from PFA$(S)[S]$ in \cite{To} and asserted to be obtainable without large cardinals.

\begin{thm}
	If ZFC is consistent, it's consistent to additionally assume that locally compact, hereditarily normal, separable spaces are hereditarily Lindel\"of.
\end{thm}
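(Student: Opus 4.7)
I would argue by contradiction. Suppose $X$ is locally compact, hereditarily normal, and separable, but not hereditarily Lindel\"of. Then $X$ contains an uncountable right-separated subspace $Y=\{y_\alpha:\alpha<\omega_1\}$: for each $\alpha$ there is an open $W_\alpha\subseteq X$ with $W_\alpha\cap Y=\{y_\beta:\beta\leq\alpha\}$. Using local compactness I shrink each $W_\alpha$ so that $\overline{W_\alpha}$ is compact, and using hereditary normality I produce an open $V_\alpha\supseteq\overline{W_\alpha}$ with $|V_\alpha\cap Y|\leq\aleph_0$. Set $U=\bigcup_{\alpha<\omega_1}W_\alpha$, which is open in $X$ and hence locally compact and normal.

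Fix a countable dense $D\subseteq X$. For each $\alpha$, $D\cap V_\alpha$ is a countable dense subset of the compact set $\overline{V_\alpha}$, so every $\overline{V_\alpha}$ is compact, hereditarily normal, and separable. In our $\mr{MA}_{\omega_1}(S)[S]$ model, in which $\bsigma$(sequential), \textbf{\textup{PPI}}, and \textbf{Moore-Mr\'owka} all hold, Lemma \ref{lxc} (whose proof from \cite{To} is, according to that paper, available without large cardinals) gives that each $\overline{V_\alpha}$ is hereditarily Lindel\"of, hence countably tight, and therefore sequential by \textbf{Moore-Mr\'owka}. I would then apply $\bsigma$(sequential) to $Y$, $\{W_\alpha\}$, $\{V_\alpha\}$ to conclude that $Y$ is $\sigma$-closed discrete in $U$, i.e.\ $Y=\bigcup_{n<\omega}Y_n$ with each $Y_n$ closed discrete in $U$. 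This application requires a compact sequential ambient containing $U$ together with the $W_\alpha$ and $V_\alpha$; the natural candidate is the closure $\overline{U}^X$, which is separable (with countable dense $D\cap U$) and hereditarily normal, hence sequential by the same argument once its compactness is arranged --- for instance by first passing, if necessary, to a closed subspace of $X$ in which a cofinal subfamily of the $W_\alpha$ sits inside a compact piece.

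Finally, Lemma \ref{lem48} applies to $U$: as a locally compact normal space with the countable (hence Lindel\"of) dense subspace $D\cap U$, $U$ has countable extent. Hence each $Y_n$ is countable and $Y=\bigcup_{n<\omega}Y_n$ is countable, contradicting $|Y|=\aleph_1$. The main obstacle is the step invoking $\bsigma$(sequential): securing a genuinely compact sequential ambient containing the relevant data, rather than merely a locally compact sequential one, is the delicate point, and must be handled either by a compactification argument (verifying that $\overline{U}^X$ or an analogous compactification retains the separability and hereditary normality needed to feed back into the chain Lemma \ref{lxc} $\Rightarrow$ countably tight $\Rightarrow$ sequential) or by a local-to-global assembly using the compact sequential $\overline{V_\alpha}$'s and the countability of each $V_\alpha\cap Y$. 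Once that is in place, the chain Lemma \ref{lxc} $\Rightarrow$ \textbf{Moore-Mr\'owka} $\Rightarrow$ $\bsigma$(sequential) $\Rightarrow$ Lemma \ref{lem48} produces the contradiction.
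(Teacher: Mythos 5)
Your overall architecture is the paper's: right-separate, shrink to $W_\alpha$ with compact closure inside $V_\alpha$, apply $\bsigma$ to conclude $Y$ is $\sigma$-closed discrete in $\bigcup W_\alpha$, and contradict Lemma \ref{lem48}. But there is a genuine gap at exactly the point you flag as delicate: producing a single compact countably tight (or sequential) space containing $Y$ and all the $W_\alpha,V_\alpha$. Neither of your proposed repairs works. The closure $\overline{U}^X$ need not be compact (take $U=X$ with $X$ non-compact), and there is no reason an uncountable, or even cofinal, subfamily of the $W_\alpha$ should sit inside one compact piece. Your local data are also shakier than you state: $V_\alpha$ is merely an open set containing $\overline{W_\alpha}$ with $|V_\alpha\cap Y|\leq\aleph_0$, so $\overline{V_\alpha}$ need not be compact, and the chain through Lemma \ref{lxc} therefore does not get off the ground; in any case knowing each $\overline{V_\alpha}$ is sequential does not let you invoke $\bsigma$(sequential), whose hypothesis is about the one ambient compact space $X$.

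The missing idea is a short preliminary bootstrapping step. Apply Lemma \ref{lem48} not to $X$ but to arbitrary open subspaces of $X$: if $D$ is any discrete subspace, the union of open sets isolating its points is locally compact, normal (by hereditary normality), and separable, and $D$ is closed discrete there, hence countable. So $X$ has countable \emph{spread}, not just countable extent. Then the one-point compactification $X^*$ has countable spread, and a compact space of countable spread is countably tight (an uncountable free sequence would be an uncountable discrete subspace). Now $X^*$ is the desired compact countably tight ambient, and one applies $\bsigma$ --- the countably tight version, obtained from $\bsigma$(sequential) together with \textbf{Moore-Mr\'owka} --- directly in $X^*$. Your final step (countability of each piece of the $\sigma$-closed-discrete decomposition via Lemma \ref{lem48}) is exactly the paper's. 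Note that Lemma \ref{lxc} plays no role in the paper's argument and is not needed.
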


\begin{proof}
	Let $X$ be such a space. By \ref{lem48} $X$ has countable spread. So does its one-point compactification $X^*$, which hence is countably tight \cite{A2}. If $X$ were not hereditarily Lindel\"of, it would include a right-separated subspace $\{x_\alpha:\alpha\in\omega_1\}$. Let $\{V_\alpha:\alpha\in\omega_1\}$ be open sets witnessing right-separation. Let $x_\alpha\in W_\alpha\subseteq\overline{W_\alpha}\subseteq V_\alpha$, with $W_\alpha$ open and $\overline{W_\alpha}$ compact. Applying $\bsigma$ to $X^*$, we see that $\{x_\alpha:\alpha\in\omega_1\}$ is $\sigma$-closed discrete in $W=\bigcup\{W_\alpha:\alpha\in\omega_1\}$. But $W$ is locally compact, separable, and hereditarily normal, so this contradicts \ref{lem48}.
\end{proof}

Also without large cardinals we obtain:

\begin{thm}\label{thm510}
	If ZFC is consistent, it is consistent to additionally assume that each hereditarily normal perfect pre-image of $\omega_1$ includes a copy of $\omega_1$.
\end{thm}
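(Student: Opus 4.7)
The plan is to work in the very same model constructed for the preceding theorem: a model of $\mr{MA}_{\omega_1}(S)[S]$, obtained without large cardinals via the Todorcevic--Dow methods cited above, in which \textbf{PPI}, $\bsigma$, and \textbf{Moore-Mr\'owka} all hold simultaneously. Once this ambient model is in place, the proof is a short assembly of the preceding theorem together with Lemma~\ref{lxb} and \textbf{PPI}.

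Let $X$ be a hereditarily normal perfect pre-image of $\omega_1$ via $\pi\colon X\to\omega_1$. The first step is to check that every separable subspace of $X$ is Lindel\"of. Given $Y\subseteq X$ with countable dense set $D$, continuity of $\pi$ yields $\pi(\overline{D})\subseteq\overline{\pi(D)}$; since $\pi(D)$ is a countable subset of $\omega_1$ it is bounded by some $\alpha<\omega_1$, and hence $\overline{Y}\subseteq\pi^{-1}([0,\alpha])$, which is compact by properness of $\pi$. Therefore $\overline{Y}$ is a compact (hence locally compact), separable, hereditarily normal space, and the preceding theorem applies to give that $\overline{Y}$ is hereditarily Lindel\"of. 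In particular $Y$ is Lindel\"of.

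With separable subspaces of $X$ shown to be Lindel\"of, Lemma~\ref{lxb} delivers a first countable perfect pre-image of $\omega_1$ sitting inside $X$. Since \textbf{PPI} holds in the model, that first countable perfect pre-image contains a copy of $\omega_1$, and hence so does $X$.

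The only real obstacle is the preparatory one: confirming that a single model of $\mr{MA}_{\omega_1}(S)[S]$, built without large cardinals, can be arranged to validate \textbf{PPI}, $\bsigma$, and \textbf{Moore-Mr\'owka} at the same time. This is exactly the content of the discussion preceding the previous theorem, where the standard Todorcevic technique (as carried out in \cite{DT1}, \cite{DT2}) is invoked to secure each of these principles in the requisite manner by interleaving the appropriate forcings. Granted this, no further reflection or forcing argument is needed for Theorem~\ref{thm510}.
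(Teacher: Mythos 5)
Your proposal is correct and follows essentially the same route as the paper: the paper's proof of Theorem~\ref{thm510} simply says to rerun the proof of Theorem~\ref{thm34} using $\bsigma$ and \textbf{PPI}, i.e.\ verify that separable subspaces are Lindel\"of via the preceding separable-implies-hereditarily-Lindel\"of theorem, apply Lemma~\ref{lxb} to extract a first countable perfect pre-image, and finish with \textbf{PPI}. You have merely unpacked that one-line reference, and the details you supply (boundedness of $\pi(D)$, compactness of $\pi^{-1}([0,\alpha])$, heredity of normality) are exactly the intended ones.
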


\begin{proof}
	Using $\bsigma$ and \textbf{PPI}, we can carry out the proof of Theorem \ref{thm34} above.
\end{proof}

We also have:
\begin{thm}\label{thm511}
	If ZFC is consistent, it is consistent to assume that every locally compact, first countable, hereditarily normal space with Lindel\"of number $\leq\aleph_1$ not including a copy of $\omega_1$ is paracompact.
\end{thm}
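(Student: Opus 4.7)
The plan is to mimic the template of Theorems~\ref{thm:paracompactcopyallmodels} and \ref{thm:paracompactcountablytight}, working entirely with consequences of MA$_{\omega_1}(S)[S]$ established earlier in this section without large cardinals: $\mathbf{PPI}$, $\bsigma$, \textbf{Moore--Mr\'owka}, the preceding theorem that locally compact hereditarily normal separable spaces are hereditarily Lindel\"of, and Lemma~\ref{LT1} that first countable normal spaces in a Souslin-tree extension are $\aleph_1$-collectionwise Hausdorff. The extra hypothesis $L(X)\leq\aleph_1$ will play the role of the $\omega_2$-reflection that $\mathbf{Axiom R}$ supplies in the earlier paracompactness arguments.

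First I would verify that no closed subspace $Y$ of $X$ is a perfect pre-image of $\omega_1$. Since $X$ is first countable and hereditarily normal, so is $Y$; by the preceding theorem of this section, separable closed subspaces of $Y$ are Lindel\"of, so Lemma~\ref{lxb} produces a first countable perfect pre-image of $\omega_1$ inside $Y$, and $\mathbf{PPI}$ then yields a copy of $\omega_1$ inside $X$, contradicting the hypothesis. (This is essentially Theorem~\ref{thm510} applied within $X$.) Combined with Lemma~\ref{LT1} and $L(X)\leq\aleph_1$ (which bounds every closed discrete subset of $X$ by $\aleph_1$), one obtains that $X$ is (fully) collectionwise Hausdorff.

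The main step is then to pass from ``locally compact, hereditarily normal, first countable, collectionwise Hausdorff, $L(X)\leq\aleph_1$, and no closed perfect pre-image of $\omega_1$'' to ``paracompact''. I would do this by a variant of the Balogh and Gruenhage--Koszmider arguments. Using $L(X)\leq\aleph_1$, cover $X$ by $\aleph_1$ cozero sets with compact closure and try to build a continuous increasing $\omega_1$-chain $\{V_\alpha:\alpha<\omega_1\}$ of open $\sigma$-compact sets with $\overline{V_\alpha}\subseteq V_{\alpha+1}$ and $\bigcup_\alpha V_\alpha=X$. If some $V_\alpha$ is clopen in $X$, peel it off and recurse, obtaining by transfinite induction a topological sum decomposition of $X$ into Lindel\"of subspaces---exactly paracompactness. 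If no such $V_\alpha$ exists, then applying collectionwise Hausdorffness to separate suitable boundary points, and using $\bsigma$ together with \textbf{Moore--Mr\'owka} in the one-point compactification to control tightness and to produce a $\sigma$-discrete expansion along the chain, one extracts a closed subspace of $X$ that maps perfectly onto $\omega_1$, contradicting the first step. I expect the principal obstacle to be executing this dichotomy cleanly: isolating the right ``boundary'' subspace that maps \emph{perfectly} (rather than merely continuously) onto $\omega_1$, and doing so using only the MA$_{\omega_1}(S)[S]$-consequences listed above. This is exactly the place where the Lindel\"of-number hypothesis is essential, confining the reflection problem to $\omega_1$ and thereby obviating the $\omega_2$-reflection of $\mathbf{Axiom R}$ invoked in Theorem~\ref{thm:paracompactcountablytight}.
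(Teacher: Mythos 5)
Your reduction is exactly the paper's: work in the large-cardinal-free model of Theorem~\ref{thm510}, use hereditary normality plus $\mathbf{PPI}$ (via Theorem~\ref{thm510}) to conclude that $X$ includes no perfect pre-image of $\omega_1$, and use first countability plus Lemma~\ref{lem15} (together with $L(X)\leq\aleph_1$ bounding closed discrete sets) to get $\aleph_1$-collectionwise Hausdorffness without any front-loading. Up to that point the proposal is correct and matches the paper.

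The divergence is in the main step. The paper does not reprove ``locally compact, normal, $L(X)\leq\aleph_1$, no perfect pre-image of $\omega_1$ implies paracompact''; it cites Lemma~\ref{lem512} from \cite{T} and merely observes that the only non-ZFC ingredients of that proof are $\bsigma$ and $\aleph_1$-collectionwise Hausdorffness, both available here. Your attempted direct argument for this step is only a sketch, and it has two concrete unresolved points. First, to continue your increasing chain $\{V_\alpha\}$ of open $\sigma$-compact sets past a limit stage $\delta$ you need $\overline{V_\delta}$ to be Lindel\"of so that it can be covered by countably many of the $\aleph_1$ cozero sets; ``closures of Lindel\"of subspaces are Lindel\"of'' is not free and is itself one of the things extracted from $\bsigma$ together with (hereditary) $\aleph_1$-collectionwise Hausdorffness (cf.\ the use of \cite[p.~104]{T} later in Section~5). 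Second, in the failure branch you need a \emph{perfect} map onto $\omega_1$, i.e.\ countably compact fibers over a club, not merely a continuous unbounded map built from boundary points of the $V_\delta$'s; you acknowledge this is the obstacle, but it is precisely the content of Lemma~\ref{lem512}, so as written the proposal assumes what it needs to prove at the critical juncture. If you intend a self-contained argument you must either carry out that extraction or, as the paper does, invoke the lemma from \cite{T} and check its hypotheses.
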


\begin{proof}
	We use the model of \ref{thm510}. In \cite{T} the second author asserted the following, but under PFA$(S)[S]$ instead of MM$(S)[S]$, which we now see should have been used.
	\begin{lem}\label{lem512}
		\textup{MM}$(S)[S]$ implies that if $X$ has Lindel\"of number $\leq\aleph_1$ and is locally compact, normal, and does not include a perfect pre-image of $\omega_1$, then $X$ is paracompact.
	\end{lem}
	
	In addition to the topological properties mentioned, the proof used $\bsigma$ and that the space was $\aleph_1$-collectionwise Hausdorff. For the purposes of \ref{thm511}, however, we get $\aleph_1$-collectionwise Hausdorff just from the Souslin forcing, since the space is first countable.
\end{proof}

MM$(S)[S]$ is also relevant for questions concerning the Baireness of $C_k(X)$, for locally compact $X$ (see \cite{GM,MN,T5}).

\begin{defn}
A \emph{\textbf{moving off} collection} for a space $X$ is a collection $\mc{K}$ of non-empty compact sets such that for each compact $L$, there is a $K \in \mc{K}$ disjoint from $L$.  A space satisfies the \textbf{Moving Off Property} (MOP) if each moving off collection includes an infinite subcollection with a discrete open expansion.
\end{defn}

\begin{defn}
$C_k(X)$, for a space $X$, is the collection of all continuous real-valued functions on $X$, considered as a subspace of the
compact-open topology on the Cartesian power $X^{\mathbb{R}}$.
\end{defn}

\begin{thm}[{~\cite{GM}}]
A locally compact space $X$ satisfies the MOP if and only if $C_k(X)$ is Baire, i.e. satisfies the Baire Category Theorem.
\end{thm}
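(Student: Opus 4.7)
The plan is to prove both directions by translating between moving off collections of compact subsets of $X$ and nested basic neighborhoods in $C_k(X)$, exploiting the fact that the standard basis of $C_k(X)$ consists of sets $[f;K,\varepsilon]=\{g:|g(x)-f(x)|<\varepsilon \text{ for all } x\in K\}$ parametrized precisely by compact sets $K\subseteq X$ and $\varepsilon>0$.

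For the ``if'' direction (MOP implies $C_k(X)$ is Baire), I fix a sequence $\{G_n\}_{n\in\omega}$ of dense open subsets of $C_k(X)$ and a nonempty basic open set $V=[f_0;L_0,\varepsilon_0]$. I inductively build $f_n\in V\cap G_0\cap\dots\cap G_n$ together with compact sets $L_0\subseteq L_1\subseteq\dots$ and $\varepsilon_n\downarrow 0$ so that $[f_{n+1};L_{n+1},\varepsilon_{n+1}]\subseteq G_{n+1}\cap[f_n;L_n,\varepsilon_n/2]$. The difficulty is that an arbitrary pointwise limit of such an $f_n$-sequence need not be continuous on $X$ (only on $\bigcup L_n$). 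To overcome this, at each stage $n$ the places where the next correction $f_{n+1}-f_n$ is allowed to be large lie on compact sets disjoint from $L_n$; this is a moving off collection relative to $L_n$, and by iterating MOP I pass to an infinite subfamily whose expansion is discrete in $X$. Arranging the construction so that all corrections from stage $n$ onward have supports in a fixed discrete-in-$X$ family (and then diagonalizing in the standard way), local finiteness at every point of $X$ forces only finitely many corrections to act at any given $x$, so the pointwise limit $f=\lim f_n$ is continuous and, by construction, lies in $V\cap\bigcap_n G_n$.

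For the ``only if'' direction (Baireness implies MOP), suppose for contradiction that $\mathcal{K}$ is a moving off collection with no infinite subcollection admitting a discrete open expansion. Using local compactness of $X$ and Tietze's extension theorem, for each compact $L\subseteq X$ and each $n\in\omega$ I define
\begin{equation*}
W_{L,n}=\{f\in C_k(X):\exists K\in\mathcal{K}\text{ with }K\cap L=\emptyset\text{ and }f\geq n+1\text{ on }K\},
\end{equation*}
and verify that each $W_{L,n}$ is open (immediate from the compact-open topology) and dense (given any basic neighborhood $[g;L',\varepsilon]$, use the moving off property to pick $K\in\mathcal{K}$ disjoint from $L\cup L'$, and extend $g|L'$ together with the constant value $n+1$ on $K$). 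Choosing an appropriate sequence of compacts $L_m$ that diagonalize against a witnessing enumeration and invoking Baireness of $C_k(X)$, I obtain an $f\in\bigcap_m W_{L_m,m}$ together with a sequence $K_m\in\mathcal{K}$ on which $f\geq m+1$. The open sets $U_m=\{x:|f(x)-(m+1)|<1/2\}$ then provide a discrete open expansion of $\{K_m\}$ (discreteness follows because $f$ is continuous and the prescribed values diverge), contradicting the hypothesis on $\mathcal{K}$.

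The main obstacle is in the forward direction: arranging the inductive construction so that the limit function is genuinely continuous on all of $X$, not merely on each $L_n$. This is exactly what MOP controls, by ensuring that successive corrections live on compact pieces that are discretely expandable in the ambient space. A secondary technical point, present in the converse, is that $X$ need not be $\sigma$-compact, so the choice of the diagonal sequence $\{L_m\}$ must track both an exhaustion driven by the $K_m$'s as they are produced and an enumeration of already-chosen witnesses; this is handled by a standard bookkeeping argument but must be set up carefully before invoking the Baire property.
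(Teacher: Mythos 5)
The paper does not prove this result; it is quoted from Gruenhage and Ma \cite{GM}, so there is no internal proof to compare against, and I will assess your argument on its own. Your ``only if'' direction is essentially the standard argument and is correct in outline, though the dense open sets should pin $f$ \emph{near} a prescribed value on some $K\in\mathcal{K}$ (say $|f-2n|<\tfrac14$ on $K$) rather than merely force $f\geq n+1$ there: with $W_{L,n}$ as you wrote it, nothing places the witness $K_m$ inside $U_m=\{x:|f(x)-(m+1)|<\tfrac12\}$, since $f$ could greatly exceed $m+1$ on $K_m$. With that repair the sets $f^{-1}((2m-\tfrac12,2m+\tfrac12))$ are automatically pairwise disjoint and discrete (each point has a neighborhood on which $f$ varies by less than $1$, and consecutive target intervals are separated by gaps of length $1$), and the parameter $L$ and the diagonal bookkeeping over the $L_m$ are unnecessary: the moving off property is used only to prove density of each $W_n$.

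The genuine gap is in the ``if'' direction, at exactly the step you flag as the main obstacle. The assertion that the places where $f_{n+1}-f_n$ is allowed to be large ``lie on compact sets disjoint from $L_n$'' is neither justified nor arrangeable: to secure $[f_{n+1};L_{n+1},\varepsilon_{n+1}]\subseteq G_{n+1}$ the function $f_{n+1}$ must in general be altered on all of $L_{n+1}\supseteq L_n$ plus a compact collar, so the supports of the corrections form an \emph{increasing} chain of compacta, not a moving off collection. Even if one isolates the ``new parts'' (the portion of $L_{n+1}$ outside an open set with compact closure around $L_n$), these sets are revealed only one at a time during the induction --- the admissible choices at stage $n+1$ depend on $G_{n+1}$ and on all earlier choices --- whereas the MOP applies to a single collection given in advance; ``iterating MOP'' is not a defined operation here. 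This is why the actual proof goes through the Banach--Mazur/Choquet game: assuming $C_k(X)$ is not Baire, the first player has a winning strategy, one builds the tree of all partial plays against that strategy, extracts from it a genuine moving off collection (using local compactness to surround the compact coordinates of the strategy's moves with compact collars), applies the MOP \emph{once} to obtain an infinite subfamily with a discrete open expansion, and uses that discreteness to glue together a continuous function lying in the intersection of some play, contradicting that the strategy wins. Your linear induction would have to be replaced by this strategy/tree bookkeeping; as written, the limit $f$ need not exist, let alone be continuous, at points lying in infinitely many correction supports but in no $L_n$.
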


\begin{lem}[{~\cite{GM,MN}}]\label{lem6}
Locally compact, paracompact spaces satisfy the MOP.
\end{lem}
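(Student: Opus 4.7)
The plan is to extract from the given moving off collection $\mc{K}$ on a locally compact paracompact space $X$ an infinite subfamily $\{K_n:n\in\w\}\subseteq\mc{K}$ that is pairwise disjoint and locally finite. Since each $K_n$ will be compact (hence closed) and the family pairwise disjoint, local finiteness upgrades for free to discreteness: given any $x$, pick a neighborhood of $x$ meeting only finitely many $K_n$ and subtract the (closed) $K_n$'s that do not contain $x$. Because $X$ is paracompact Hausdorff, and hence collectionwise normal, this discrete closed family admits a discrete open expansion $\{U_n\}$ with $K_n\subseteq U_n$, which is exactly what MOP demands.

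My first step is to invoke the standard characterization of locally compact paracompact spaces as topological sums $X=\bigsqcup_{\alpha\in A}X_\alpha$ of clopen $\sigma$-compact subspaces. For each $\alpha$ I fix a compact exhaustion $L_{\alpha,0}\subseteq\op{int}(L_{\alpha,1})\subseteq L_{\alpha,1}\subseteq\cdots$ with $X_\alpha=\bigcup_n L_{\alpha,n}$; the crucial feature of this decomposition is that any compact subset of $X$, in particular any $K\in\mc{K}$, meets only finitely many $X_\alpha$.

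Next I build the $K_n$'s recursively. Pick $K_0\in\mc{K}$ arbitrarily. Given $K_0,\dots,K_n$, let $F_n=\{\alpha:K_i\cap X_\alpha\neq\emptyset\text{ for some }i\leq n\}$ (finite) and, for each $\alpha\in F_n$, let $N_{\alpha,n}$ be least with $(K_0\cup\cdots\cup K_n)\cap X_\alpha\subseteq L_{\alpha,N_{\alpha,n}}$. Form the compact set $L_n=\bigsqcup_{\alpha\in F_n}L_{\alpha,N_{\alpha,n}+1}$ and use moving off to pick $K_{n+1}\in\mc{K}$ with $K_{n+1}\cap L_n=\emptyset$. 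Pairwise disjointness of the resulting sequence is immediate, since $K_0\cup\cdots\cup K_n\subseteq L_n$.

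The step I expect to require the most care is verifying local finiteness, and it is where the $X_\alpha$-decomposition earns its keep: without such a decomposition, pairwise disjoint compacts produced by moving off can accumulate in the non-$\sigma$-compact direction of $X$. Fix $x\in X$; let $\alpha$ be the unique index with $x\in X_\alpha$ and pick $M$ with $x\in L_{\alpha,M}$. If $\alpha\notin\bigcup_n F_n$ then no $K_n$ meets $X_\alpha$ at all, so the neighborhood $\op{int}(L_{\alpha,M+1})$ of $x$ is disjoint from every $K_n$. Otherwise $\alpha$ first enters some $F_{n_0}$, and for each subsequent $n\geq n_0$ with $K_{n+1}\cap X_\alpha\neq\emptyset$ the construction forces that intersection to lie outside $L_{\alpha,N_{\alpha,n}+1}$, which in turn forces $N_{\alpha,n+1}\geq N_{\alpha,n}+2$. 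Hence either $K_n\cap X_\alpha$ is eventually empty or $N_{\alpha,n}\to\infty$; in either case only finitely many $K_n$ can meet $L_{\alpha,M+1}$. With local finiteness established, the reduction from the opening paragraph finishes the proof.
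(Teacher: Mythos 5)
The paper does not prove this lemma at all; it is quoted from \cite{GM} and \cite{MN} as a known fact, so there is no in-text argument to compare against. Your proof is correct and is essentially the standard argument from those sources: you use the decomposition of a locally compact paracompact space into a topological sum of clopen $\sigma$-compact pieces, exhaust each piece by compacta with $L_{\alpha,n}\subseteq\op{int}(L_{\alpha,n+1})$, and recursively apply the moving off property to escape an enlarged compact set at each stage. All the delicate points check out: $F_n$ is finite because each $K_i$ meets only finitely many summands; $K_0\cup\cdots\cup K_n\subseteq L_n$ gives pairwise disjointness (and hence that the $K_n$ are genuinely distinct, so the subcollection is infinite); the observation that a new intersection with $X_\alpha$ forces $N_{\alpha,n+1}\geq N_{\alpha,n}+2$ correctly yields that only finitely many $K_m$ can meet $L_{\alpha,M+1}$; and the upgrade from a locally finite pairwise disjoint closed family to a discrete one, followed by collectionwise normality (with the routine normality shrinking to make the open expansion discrete rather than merely disjoint), delivers exactly what the MOP asks for. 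This is a complete, self-contained proof of the cited lemma.
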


\begin{thm}\label{thm35}
	$\mr{MM}(S)[S]$ implies that normal spaces satisfying the MOP are paracompact if they are:
	\begin{itemize}
		\item[(1)] locally compact, countably tight, and hereditarily normal, or
		\item[(2)] first countable and hereditarily normal, or
		\item[(3)] locally compact, countably tight with Lindel\"of number $\leq\aleph_1$, or 
		\item[(4)] first countable, with Lindel\"of number $\leq\aleph_1$, or
		\item[(5)] locally compact, countably tight, and countable sets have Lindel\"of closures.
	\end{itemize}
\end{thm}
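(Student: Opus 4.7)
The plan is to reduce each of cases (1)--(5) to a paracompactness theorem already proved under MM$(S)[S]$ in this paper. The common first step is that MOP rules out a closed copy of $\omega_1$ in $X$: if $\omega_1 \subseteq X$ is closed, then $\{\{\alpha+1\}:\alpha<\omega_1\}$ is a moving off collection (compact subsets of $\omega_1$ are bounded), yet for any infinite sub-selection $\{\{\alpha_n\}\}_{n<\omega}$ with the $\alpha_n$ strictly increasing, the point $\beta=\sup_n\alpha_n<\omega_1$ is a cluster point in $X$, so no discrete open expansion can exist. In each of (1)--(5), countable tightness (automatic under first countability) forces any embedded copy of $\omega_1$ to be closed, since each initial segment $[0,\alpha]$ is compact and hence closed in $X$; a countable trace witnessing $p\in \overline{\omega_1}\setminus \omega_1$ would sit in one such $[0,\alpha]$, contradiction. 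So $X$ contains no copy of $\omega_1$, and in the countably tight cases (1), (3), (5), Theorem~\ref{thmConj2} upgrades this to the absence of a perfect preimage of $\omega_1$.

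Case (5) is then exactly the hypothesis list of Theorem~\ref{thm312}. Case (1) also reduces to Theorem~\ref{thm312} after observing that separable closed subspaces of $X$ are Lindel\"of: such a subspace is locally compact, hereditarily normal, and separable, and hence hereditarily Lindel\"of by the (unlabelled) theorem in Section 5 that establishes this consequence of MM$(S)[S]$. Case (3) is a direct application of Lemma~\ref{lem512}, whose $\aleph_1$-collectionwise Hausdorffness requirement is supplied by \textbf{LCN}$(\aleph_1)$.

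Cases (2) and (4) are the main obstacle, since local compactness is not among the hypotheses. The plan is first to show that MOP together with first countability and normality forces $X$ to be locally compact: if some $x\in X$ had no compact neighbourhood, a decreasing local base $\{U_n\}$ and normality allow one to extract non-empty compact sets $K_n$ from each non-compact ``annulus'' $\overline{U_n}\setminus \overline{U_{n+1}}$, yielding a moving off family (any compact $L\subseteq X$ misses all but finitely many $K_n$, since $L\cap\bigcap_n\overline{U_n}$ is a compact set trapped in arbitrarily small neighbourhoods of $x$) which is nevertheless cluster-blocked at $x$, contradicting MOP. Once local compactness is in hand, case (4) follows from Lemma~\ref{lem512}, and case (2) follows from Theorem~\ref{thm511} after bounding the Lindel\"of degree of $X$ by $\aleph_1$ via Lemma~\ref{lem48} and the Section~5 theorem on hereditary Lindel\"ofness of separable subspaces, with $\aleph_1$-collectionwise Hausdorffness supplied by Lemma~\ref{lem15}. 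The hardest step will be this local-compactness extraction in (2) and (4): the compact sets $K_n$ must be chosen delicately enough that the family genuinely moves off yet is still cluster-blocked at $x$, and this is the technical heart of the argument.
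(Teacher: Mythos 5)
Your overall strategy matches the paper's: show MOP excludes a copy of $\omega_1$, then feed the cases into the paracompactness characterizations already proved under MM$(S)[S]$. Your direct argument that a copy of $\omega_1$ must be closed under countable tightness and that a closed copy yields a moving off family with no discrete expansion is fine, and your treatments of (3) and (5) agree with the paper's (which uses Lemma~\ref{lem512} plus Theorem~\ref{thmConj2} for (3), and Theorem~\ref{thm312} plus Balogh's lemma for (5)). For (1) the paper goes directly through Theorem~\ref{312} (the hereditarily normal characterization), which avoids your detour through the separable-closed-subspaces-are-Lindel\"of argument; that detour leans on a Section~5 result stated for a different, large-cardinal-free model, and while its ingredients ($\bsigma$, Lemma~\ref{lem48}) are available under MM$(S)[S]$, it is an unnecessary dependency.

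There is, however, a genuine gap in your case (2). You propose to apply Theorem~\ref{thm511} after ``bounding the Lindel\"of degree of $X$ by $\aleph_1$ via Lemma~\ref{lem48}.'' Lemma~\ref{lem48} says that a locally compact normal space \emph{with a dense Lindel\"of subspace} has countable extent; it gives no bound whatsoever on the Lindel\"of number of an arbitrary first countable, hereditarily normal MOP space, and no such bound holds (there is no reason such a space is separable or even has Lindel\"of number $\le\aleph_1$ --- consider a discrete space of size $\aleph_2$). The repair is immediate and is what the paper does: once local compactness is in hand, a first countable hereditarily normal MOP space satisfies the hypotheses of case (1), so (2) reduces to (1) with no cardinality considerations. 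Relatedly, the ``technical heart'' you identify --- first countable normal MOP spaces are locally compact --- is Lemma~\ref{lem510}, cited from Gruenhage--Ma and McCoy--Ntantu, so it need not be reproved; as written your sketch of it is not sound, since the ``annuli'' $\overline{U_n}\setminus\overline{U_{n+1}}$ need not contain nonempty compact sets, and the moving off condition only requires that each compact $L$ miss \emph{some} $K_n$, which your argument about $L\cap\bigcap_n\overline{U_n}$ does not establish. The paper also routes the ``no copy of $\omega_1$'' step through \textbf{Moore-Mr\'owka} and the cited facts that countably compact subspaces of sequential spaces are closed and that countably compact MOP spaces are compact; your more hands-on version of this step is acceptable.
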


\begin{proof}
	These all follow easily from \ref{thmConj2}, \ref{thm312}, and \textbf{Moore-Mr\'owka}, using:
	\begin{lem}[{ \cite{IN}}]
		In a sequential space, countably compact subspaces are closed.
	\end{lem}
	
	\begin{lem}[{ \cite{GM,MN}}]
		Countably compact spaces satisfying the MOP are compact.
	\end{lem}
	
	\begin{lem}[{ \cite{GM,MN}}]\label{lem510}
		First countable spaces satisfying the MOP are locally compact.
	\end{lem}
	
	\begin{lem}[{ \cite{B1}}]
		The one-point compactification of a locally compact space $X$ is countably tight if and only if $X$ does not include a perfect pre-image of $\omega_1$.
	\end{lem}
	
	If they have the MOP, sequential spaces do not include copies of $\omega_1$, so (1) follows from \ref{312}. (2) follows from (1) plus \ref{lem510}. (3) follows from \ref{lem512} plus \ref{thmConj2}. (4) follows from (3) plus \ref{lem510}. (5) follows from \ref{thm312}, \ref{thmConj2} and Balogh's Lemma above.
\end{proof}

In the special case of a space with the MOP, we have:
\begin{thm}\label{thm513}
	If ZFC is consistent, then it is consistent to additionally assume that first countable normal spaces satisfying the MOP and with Lindel\"of number $\leq\aleph_1$ are paracompact.
\end{thm}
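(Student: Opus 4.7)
The plan is to work in the ZFC-only model constructed for Theorem \ref{thm510}: a model of MA$_{\omega_1}(S)[S]$ (obtained without large cardinals) in which $\bsigma$, \textbf{PPI}, and \textbf{Moore-Mr\'owka} all hold, and in which normal first countable spaces are $\aleph_1$-collectionwise Hausdorff by virtue of the terminal Souslin tree forcing (Lemma \ref{lem15}). Let $X$ be first countable, normal, with the MOP, and with Lindel\"of number $\leq\aleph_1$. I would first observe that Lemma \ref{lem510} forces $X$ to be locally compact, and that $X$, being first countable and hence sequential, cannot contain a copy of $\omega_1$ --- this is precisely the observation used inside the proof of Theorem \ref{thm35} that sequential MOP spaces exclude $\omega_1$. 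Then \textbf{PPI}, applied contrapositively, upgrades this to the statement that $X$ contains no perfect pre-image of $\omega_1$, since any first countable such pre-image would include a copy of $\omega_1$.

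At this stage $X$ satisfies every hypothesis of Lemma \ref{lem512}, so I would simply invoke that lemma's proof. As the authors remark immediately after Lemma \ref{lem512}, its argument appeals to MM$(S)[S]$ only through two instruments: $\bsigma$ and the $\aleph_1$-collectionwise Hausdorffness of the ambient space. Both are available here --- $\bsigma$ is in the model, and Lemma \ref{lem15} delivers $\aleph_1$-collectionwise Hausdorffness of the first countable normal $X$. Running the Lemma \ref{lem512} argument in this environment yields paracompactness of $X$, exactly parallel to the way Theorem \ref{thm511} was deduced in the same model, but with the MOP now playing the role that ``hereditary normality plus no copy of $\omega_1$'' played there in ruling out perfect pre-images of $\omega_1$.

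The main obstacle I anticipate is the audit of Lemma \ref{lem512}'s proof: one must verify that no step in it covertly uses \textbf{Axiom R}, \textbf{SCC}, \textbf{NSSAT}, or any other feature of MM$(S)[S]$ that the ZFC-only model of Theorem \ref{thm510} does not supply. The authors flag essentially this caveat when they note that \cite{T} asserted Lemma \ref{lem512} ``under PFA$(S)[S]$ instead of MM$(S)[S]$, which we now see should have been used''; the present deduction makes a further passage from MM$(S)[S]$ down to a bare ZFC-consistency model, and the delicate task is to confirm that the topological hypotheses --- first countability, the MOP, Lindel\"of number $\leq\aleph_1$, and the consequent absence of perfect pre-images of $\omega_1$ --- genuinely absorb whatever reflection MM$(S)[S]$ would otherwise have contributed.
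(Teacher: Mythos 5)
Your proposal is correct and follows essentially the same route as the paper: the paper's (very terse) proof likewise reduces Theorem \ref{thm513} to Lemma \ref{lem512} by noting that Lemma \ref{lem510} gives local compactness, that the MOP together with sequentiality and \textbf{PPI} excludes perfect pre-images of $\omega_1$, and that $\bsigma$ plus the $\aleph_1$-collectionwise Hausdorffness supplied by the terminal Souslin forcing (first countability) are all that Lemma \ref{lem512}'s argument actually uses. Your write-up just makes explicit the steps the paper leaves to the reader.
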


\begin{proof}
	Such a space is locally compact and does not include a perfect pre-image of $\omega_1$.
\end{proof}

MA$_{\omega_1}$ gives counterexamples for the conclusions of \ref{thm35} and \ref{thm513}. See e.g. \cite{T5}.

\begin{thm}
	If ZFC is consistent, then it is consistent to assume that first countable hereditarily normal, locally connected spaces satisfying the MOP are paracompact.
\end{thm}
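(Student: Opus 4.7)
The plan is to reduce to Theorem~\ref{thm513} by decomposing $X$ along its connected components. First I would note that by Lemma~\ref{lem510}, $X$ is locally compact, and by local connectedness the connected components of $X$ are clopen, so $X$ is the topological sum of its components. Since paracompactness is preserved by topological sums, it suffices to prove that each component $C$ is paracompact.

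Next I would verify that each component $C$ inherits, through its clopen inclusion in $X$, first countability, hereditary normality, local compactness, local connectedness, and the MOP (the last by restricting any discrete open expansion witnessing MOP in $X$ to $C$, using that $L\cap C$ is compact in $C$ whenever $L$ is compact in $X$). Since $C$ is first countable, and any countably compact subspace of a sequential space is closed by the lemma of \cite{IN}, any copy of $\omega_1$ inside $C$ would be closed. But $\omega_1$ is countably compact and not compact, so by the lemma of \cite{GM,MN} cited after Theorem~\ref{thm35} it fails the MOP; and MOP passes to closed subspaces, so $C$ contains no copy of $\omega_1$. Invoking \textbf{PPI}, which holds in the model of Theorem~\ref{thm510}, no first countable perfect pre-image of $\omega_1$ embeds in $C$, so $C$ contains no perfect pre-image of $\omega_1$ at all.

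The hard part will be the classical topological step: in a connected, locally compact, locally connected Hausdorff space $C$ containing no closed copy of $\omega_1$, $C$ must be Lindel\"of (equivalently $\sigma$-compact). The standard exhaustion argument builds a continuous increasing chain $\{U_\alpha\}$ of relatively compact connected open subsets of $C$ with $\overline{U_\alpha}\subseteq U_{\alpha+1}$, whose union is clopen and therefore equal to $C$ by connectedness. If this chain cannot be truncated at a countable stage, then first countability at limit stages of cofinality $\omega$, together with suitable boundary-point selections at successor stages, should produce a closed copy of $\omega_1$ inside $C$, contradicting the previous paragraph. Consequently $C$ is $\sigma$-compact, hence Lindel\"of, and being $T_3$ it is paracompact. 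Therefore $X$ is paracompact, and the model supplied by Theorem~\ref{thm513} (which is consistent relative to ZFC alone) witnesses the statement.
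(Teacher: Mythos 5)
Your reduction to clopen components, and the verification that each component is locally compact (Lemma~\ref{lem510}), inherits the MOP, contains no copy of $\omega_1$ (via sequentiality and the countable-compactness lemmas), and hence no perfect pre-image of $\omega_1$ (via \textbf{PPI}), all match what the paper's argument needs. The gap is the step you label ``classical'': it is false in ZFC that a connected, locally compact, locally connected Hausdorff space (even first countable and separable) with no closed copy of $\omega_1$ must be Lindel\"of --- the Pr\"ufer manifold is a standard counterexample. Concretely, your exhaustion already breaks at the first limit stage: $U_\omega=\bigcup_n U_n$ is open and $\sigma$-compact, but $\overline{U_\omega}$ need not be compact, nor even Lindel\"of, so you cannot choose a relatively compact (or even Lindel\"of) $U_{\omega+1}\supseteq\overline{U_\omega}$, and the boundary-point selection producing a copy of $\omega_1$ never gets started. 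Moreover, aiming to prove each component Lindel\"of is essentially circular: for a connected locally compact space, Lindel\"ofness is equivalent to the paracompactness you are trying to establish.

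The paper supplies exactly the missing ingredient and then settles for less than Lindel\"of. In the model at hand one has $\bsigma$ and hereditary $\aleph_1$-collectionwise Hausdorffness, which for a locally compact, hereditarily normal space not including a perfect pre-image of $\omega_1$ give that the closure of every Lindel\"of subspace is Lindel\"of (quoted from \cite{T}, p.~104); this is a consistency-strength fact, not a ZFC one. Feeding that into the Eisworth--Nyikos lemma \cite{EN}, the space decomposes into a topological sum of Type~I pieces, i.e.\ pieces of Lindel\"of number $\leq\aleph_1$ rather than Lindel\"of pieces, and those are then handled by the Lindel\"of-number-$\leq\aleph_1$ paracompactness result (Theorem~\ref{thm511}, via Lemma~\ref{lem512}), not by an appeal to $\sigma$-compactness. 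If you replace your ``classical step'' with the closure-of-Lindel\"of-is-Lindel\"of property of the model and weaken your target from Lindel\"of to Lindel\"of number $\leq\aleph_1$, your outline becomes the paper's proof.
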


\begin{proof}
	The extra ingredient is that the local connectedness will enable us to decompose the space into a sum of pieces with Lindel\"of number $\leq\aleph_1$. More precisely,
	
	\begin{defn}
		A space $X$ is of \textbf{Type I} if $X=\bigcup\{X_\alpha:\alpha\in\omega_1\}$, where each $X_\alpha$ is open, $\alpha<\beta$ implies $\overline{X}_\alpha\subseteq X_\beta$, and each $X_\alpha$ is Lindel\"of.
	\end{defn}
	
	In \cite{T}, it is shown on page 104 that, assuming $\bsigma$ and hereditary $\aleph_1$-collectionwise Hausdorffness for a locally compact hereditarily normal space not including a perfect pre-image of $\omega_1$ that the closure of a Lindel\"of subspace is Lindel\"of. Then we quote:
	
	\begin{lem}[{ \cite{EN}}]
		If $X$ is locally compact, locally connected, and countably tight, then $X$ is a topological sum of Type I spaces if and only if every Lindel\"of subspace of $X$ has Lindel\"of closure.
	\end{lem}
	
	Since a topological sum of paracompact spaces is paracompact, this will complete the proof of the Theorem.
\end{proof}

\begin{prob}\textup{
	Without large cardinals, is there a model in which both $\bsigma$ and \textbf{LCN}$(\aleph_1)$ hold?}
\end{prob}

It may be of interest that \textbf{SRP} implies a weaker version of the conclusion of Theorem \ref{thm35}.2.

\begin{thm}
	\textup{\textbf{SRP}} implies every first countable, monotonically normal space satisfying the MOP is paracompact.
\end{thm}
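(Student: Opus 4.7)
The plan is to assume for contradiction that $X$ is first countable, monotonically normal, satisfies the MOP, but is not paracompact, and derive a contradiction via the Balogh--Rudin structure theorem for monotonically normal spaces combined with Lemma \ref{lem510}. By Balogh--Rudin, a non-paracompact monotonically normal space contains a closed subspace $Y$ homeomorphic to a stationary subset $S$ of some regular uncountable cardinal $\kappa$. Since MOP is closed-hereditary and first countability together with monotone normality pass to subspaces, $Y\cong S$ is itself first countable, monotonically normal, and satisfies MOP; Lemma \ref{lem510} then upgrades this to local compactness of $S$.

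The core topological work is to show that first countability, local compactness, and stationarity of $S\subseteq\kappa$ together force $S$ to contain a closed copy of $\omega_1$. First countability forces limit points of $S$ to have $\kappa$-cofinality $\omega$; local compactness at $\alpha\in S$ requires $\alpha$ to avoid the $\kappa$-closure of the ``bad limits'' $B=\{\gamma\in\kappa\setminus S:S\text{ is cofinal below }\gamma\}$. Stationarity of $S$ combined with the cubness of $\overline B$ would otherwise force $S\cap\overline B$ to be stationary, producing stationarily many points of $S$ without compact neighbourhoods and contradicting local compactness; so $S$ must include a club $C$. For $\kappa=\omega_1$, $C$ is directly homeomorphic to $\omega_1$; for $\kappa>\omega_1$ the stationary reflection supplied by \textbf{SRP} via \textbf{Axiom R} (established earlier in the paper) transports the closed copy down to produce a closed copy of $\omega_1$ inside $X$.

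The argument closes by noting that $\omega_1$ does not satisfy the MOP---the singleton collection $\{\{\alpha\}:\alpha<\omega_1\}$ is moving off, yet every infinite $\omega$-subcollection has its supremum in $\omega_1$, preventing any discrete open expansion---so a closed copy of $\omega_1$ inside $X$ contradicts closed-hereditariness of MOP for $X$. Hence $X$ must be paracompact. The main obstacle is the reflection step for $\kappa>\omega_1$, where \textbf{SRP}'s stationary-reflection machinery is exactly what permits reducing arbitrary regular $\kappa$ uniformly to the $\omega_1$ case; without it, one would be forced to manage competing cofinalities across different regular cardinals.
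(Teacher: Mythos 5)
Your skeleton agrees with the paper's: both arguments start from the Balogh--Rudin theorem to extract a closed copy of a stationary subset $S$ of a regular uncountable cardinal $\kappa$, and both finish by exhibiting inside it a closed, countably compact, non-compact subspace (a copy of $\omega_1$), which contradicts the MOP because the MOP is closed-hereditary and countably compact spaces with the MOP are compact. The middle is genuinely different. The paper only uses that first countability forces every limit point of $S$ to be $\omega$-cofinal, and then quotes \cite[37.18]{J}: \textbf{SRP} implies such a stationary set contains a closed copy of a club of $\omega_1$. You instead push the MOP down to $S$ (legitimate, but say why: a moving-off family in a closed subspace is moving off in the whole space, and discrete open expansions restrict), apply Lemma \ref{lem510} to get $S$ locally compact, and argue that a locally compact stationary subset of $\kappa$ must contain a club. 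That sub-claim is correct: if $\kappa\setminus S$ were stationary, the set $B$ of ordinals outside $S$ below which $S$ is cofinal would be stationary, $\overline{B}$ would be a club, and each $\alpha\in S\cap\overline{B}$ would be a limit of $B$ from below and hence have no compact neighbourhood in $S$.

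The gap is your treatment of $\kappa>\omega_1$. ``The stationary reflection supplied by \textbf{SRP} via \textbf{Axiom R} transports the closed copy down'' is not an argument: \textbf{Axiom R} reflects stationary subsets of $[X]^{<\omega_1}$ into sets of size $\aleph_1$ and says nothing about extracting a closed copy of $\omega_1$ from a club $C\subseteq S$ of $\kappa$; moreover the obvious candidate, an initial segment of $C$ of order type $\omega_1$, is \emph{not} closed in $S$, since its supremum is again an element of $C\subseteq S$. What is actually true is that the case $\kappa>\omega_1$ cannot occur at all: a club of $\kappa>\omega_1$ has limit points of cofinality $\omega_1$, and these are limit points of $S$ of uncountable character, contradicting first countability (your own opening observation, and the paper's preliminary lemma). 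Once that repair is made, \textbf{SRP} has vanished from your proof entirely, so you would be claiming the theorem outright in ZFC. That is a much stronger assertion than the paper makes --- the reflection statement the paper relies on is genuinely independent of ZFC (it fails in $L$) --- so before accepting it you should scrutinize the two inputs that do the extra work in your route, namely the closed-hereditariness of the MOP and Lemma \ref{lem510}; the paper's proof needs neither, because it never uses local compactness of the stationary copy.
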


\begin{lem}
	Suppose $S$ is a first countable stationary subspace of some regular cardinal. Then each $s\in S$ is an $\omega$-cofinal ordinal.
\end{lem}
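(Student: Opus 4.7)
The plan is to suppose for contradiction that some $s\in S$ is a limit ordinal with $\mr{cf}(s)=\lambda>\omega$ and derive a contradiction. Successor ordinals (and $0$) are isolated in $S$ and are understood in the paper's convention to be $\omega$-cofinal; in any event the argument below concerns only limit ordinals.

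First, I would fix a countable local base $\{V_n:n<\omega\}$ at $s$ in $S$ using first countability. Since $S$ carries the order topology inherited from the ambient regular cardinal $\kappa$ and $s$ is a non-maximal element, a basic neighborhood of $s$ in $\kappa$ has the form $(\alpha,s{+}1)\cap S=(\alpha,s]\cap S$ with $\alpha<s$. So for each $n$ I can choose $\alpha_n<s$ with $(\alpha_n,s]\cap S\subseteq V_n$. Set $\beta=\sup_n\alpha_n$; since $\mr{cf}(s)=\lambda>\omega$, this countable supremum satisfies $\beta<s$.

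Next I would unwind the local base property at an arbitrary $\gamma\in(\beta,s)$. The set $(\gamma,s]\cap S$ is open in $S$ and contains $s$, hence contains some $V_n$, hence contains $(\alpha_n,s]\cap S\supseteq(\beta,s]\cap S$. Therefore $(\gamma,s]\cap S=(\beta,s]\cap S$, which forces $S\cap(\beta,\gamma]=\emptyset$ for every $\gamma<s$, and thus $(\beta,s]\cap S=\{s\}$. So $s$ is isolated in $S$ with isolating ordinal $\beta<s$.

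Finally I would rule out such isolated points of uncountable cofinality using Fodor's lemma. Let $T\subseteq S$ be the set of $s$ with $\mr{cf}(s)>\omega$; by the previous step each $s\in T$ carries a witness $\beta_s<s$, giving a regressive map $s\mapsto\beta_s$ on $T$. If $T$ were stationary in $\kappa$, Fodor would produce a stationary $T'\subseteq T$ on which $s\mapsto\beta_s$ is constant with some value $\beta_0$; but then any $s_1<s_2$ in $T'$ with $s_1>\beta_0$ would contradict $S\cap(\beta_0,s_2)=\emptyset$. Hence $T$ is nonstationary. The main obstacle is reconciling this with the literal reading ``each $s\in S$'': the argument only rules out stationarily many counterexamples, while isolated points of $S$ of uncountable cofinality would form a nonstationary exceptional set. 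In the intended application one either works modulo this nonstationary set or assumes at the outset that $S$ is dense-in-itself (equivalently, the trivial nonstationary set of isolated points has been discarded), which is harmless for the use of the lemma in the surrounding argument about first countable monotonically normal MOP spaces.
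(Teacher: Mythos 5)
Your proposal is correct and its core is the same as the paper's: the paper's entire proof is the two-sentence observation that each $s\in S$ is either isolated in $S$ or a limit of a subset of $S$, and that in the latter case first countability yields a countable sequence from $S$ converging to $s$, whence $\mathrm{cf}(s)=\omega$. Your argument is the contrapositive of exactly this step (if $\mathrm{cf}(s)>\omega$, the countable local base stabilizes below $s$ and forces $s$ to be isolated), carried out in more detail. Where you genuinely go beyond the paper is in confronting the isolated case: the paper's proof simply says nothing about isolated points of $S$, even though such a point can have uncountable cofinality, so the lemma as literally stated ("each $s\in S$") is not what either proof actually establishes. Your Fodor argument showing that the exceptional points form a nonstationary set is the missing bookkeeping, and your closing remark is the right diagnosis: in the application (reflecting a stationary set of $\omega$-cofinal ordinals via \textbf{SRP}) one discards this nonstationary set without loss, so the "mod nonstationary" version is all that is needed. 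In short, same key idea, but your write-up is the more honest and complete one; the only cosmetic slip is describing successor ordinals as "$\omega$-cofinal," which they are not, though they too land in the harmless nonstationary exceptional set.
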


\begin{proof}
	Each $s\in S$ is either isolated in $S$ or is a limit of some subset of $S$. By first countability, in the latter case, each such $s$ is a limit of a sequence of elements of $S$.
\end{proof}

\begin{proof}[Proof of Theorem.]
	Suppose not. Then by \cite{BR} the space includes a copy of a stationary subset of some regular cardinal. By \cite[37.18]{J} \textbf{SRP} implies that that stationary set includes a copy of a closed unbounded subset of $\omega_1$. That copy is closed, countably compact but not compact, contradicting the MOP.
\end{proof}

\begin{prob}[ \cite{GM}]\textup{
	Is there in ZFC a locally compact, normal, non-paracompact space with the MOP?}
\end{prob}

We conjecture th answer is positive. Large cardinals would be necessary to refute the existence of such a space, since an example can be constructed from the failure of the Covering Lemma for the Core Model K, which entails the consistency of measurable cardinals. We thank Peter Nyikos for referring us to \cite{G}, where that failure is used to construct a locally compact, locally countable, normal, non-paracompact space $X$ on $\kappa^+\times\omega_1$, where $\kappa^+$ is the successor of a singular strong limit cardinal of countable cofinality, such that the spaces $X_\alpha=\alpha\times\omega$ are metrizable for all $\alpha\in\kappa^+$. It follows that closed subspaces of $X$ of size $\leq 2^{\aleph_0}$ are locally compact and metrizable, so satisfy the MOP by \ref{lem6}. On the other hand,

\begin{thm}[ \cite{T5}]
	If a Hausdorff space $Z$ is locally countable, locally compact, and closed subspaces of $\leq 2^{\aleph_0}$ have the MOP, then $Z$ has the MOP.
\end{thm}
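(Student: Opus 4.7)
The plan is to use an elementary submodel argument to produce a clopen subspace $Y\subseteq Z$ of size at most $2^{\aleph_0}$ in which the given moving off collection still moves off, and then to transfer the resulting discrete open expansion back to $Z$ via the clopenness of $Y$.

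First I would establish the local structure of $Z$. Since $Z$ is Hausdorff, locally compact, and locally countable, each point has a countable compact Hausdorff neighborhood, which is scattered and metrizable, hence first countable and zero-dimensional. Consequently $Z$ is first countable, admits a basis of countable compact clopen sets, and every compact subset of $Z$ is countable. Next, choose $M\prec H(\theta)$ with $Z,\mathcal{K}\in M$, $|M|=2^{\aleph_0}$, and $[M]^{\aleph_0}\subseteq M$ (possible because $(2^{\aleph_0})^{\aleph_0}=2^{\aleph_0}$), and set $Y=Z\cap M$. The key claim is that $Y$ is clopen in $Z$. For closedness: if $z$ is in the closure of $Y$, first countability gives a sequence $(z_n)\subseteq Y$ converging to $z$; this sequence lies in $[M]^{\aleph_0}\subseteq M$, so by elementarity its unique Hausdorff limit is in $M$, forcing $z\in Y$. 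For openness: each $z\in Y$ has, by elementarity applied in $M$, a compact clopen countable neighborhood $N_z\in M$; countability of $N_z$ together with $N_z\in M$ forces $N_z\subseteq M$, so $z\in N_z\subseteq Y$.

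To invoke the hypothesis, I would verify that $\mathcal{K}_Y:=\{K\in\mathcal{K}:K\subseteq Y\}$ is moving off in $Y$. Given compact $L\subseteq Y$, $L$ is countable (every compact set in $Z$ is) hence $L\in[M]^{\aleph_0}\subseteq M$; by the moving off property of $\mathcal{K}$ in $Z$ together with elementarity, there is $K\in\mathcal{K}\cap M$ disjoint from $L$, and countability of $K$ gives $K\subseteq M$, so $K\in\mathcal{K}_Y$. By the hypothesis, $Y$ (a closed subspace of $Z$ of size $\leq 2^{\aleph_0}$) satisfies MOP, yielding an infinite $\{K_n\}\subseteq\mathcal{K}_Y$ with a discrete open expansion $\{U_n\}$ in $Y$. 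The clopenness of $Y$ lifts this expansion to $Z$: each $U_n$ is open in the open set $Y$, hence open in $Z$; for $z\in Y$ the $Y$-discreteness supplies a $Y$-open (hence $Z$-open) neighborhood of $z$ meeting at most one $U_n$; for $z\in Z\setminus Y$, the complement $Z\setminus Y$ is open and disjoint from $\bigcup U_n\subseteq Y$, so it is a neighborhood of $z$ meeting no $U_n$.

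The main obstacle is proving that $Y$ is actually clopen rather than merely closed. Closedness is a routine elementary-submodel fact, but openness is what allows the discrete expansion to survive the passage from $Y$ to $Z$, and it rests on the simultaneous availability of a compact clopen basis, local countability (so that such a basis element is countable and therefore entirely absorbed into $M$), and the closure $[M]^{\aleph_0}\subseteq M$. Once $Y$ is clopen, both the verification of moving off in $Y$ and the discreteness transfer back to $Z$ become essentially automatic.
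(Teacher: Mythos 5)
The paper states this result only as a quotation from \cite{T5} and gives no proof, so there is nothing internal to compare against; your argument is correct and is essentially the standard one for locally compact, locally countable spaces. The construction of an $\omega$-covering $M\prec H(\theta)$ of size $2^{\aleph_0}$, the verification that $Z\cap M$ is clopen (open because a countable compact neighbourhood in $M$ is absorbed into $M$, closed via first countability and $[M]^{\aleph_0}\subseteq M$), the reflection of the moving off property to $Z\cap M$, and the transfer of the discrete open expansion across the clopen subspace are all sound.
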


It follows that $X$ has the MOP.\qed\\

With MM$(S)[S]$ we have:

\begin{thm}
	\textup{MM}$(S)[S]$ implies that if $X$ is normal, locally compact, locally countable, and closed subspaces of size $\leq 2^{\aleph_0}$ are metrizable, then $X$ is metrizable.
\end{thm}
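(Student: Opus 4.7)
The plan is to verify the hypotheses of Theorem~\ref{thm312} to conclude $X$ is paracompact, and then to exploit local countability plus paracompactness to decompose $X$ into countable clopen pieces, each of which is metrizable by hypothesis.

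First I would observe that $X$ is first countable, hence countably tight. At any $p \in X$, local countability provides a countable open neighborhood of $p$, and local compactness supplies a compact neighborhood $K \subseteq X$ of $p$ that we may take to lie inside it. Then $K$ is a countable compact Hausdorff space, so is second countable, giving a countable base at $p$ in $K$ and hence in $X$. As a bonus, any separable subspace of $X$ is first countable and separable, so has cardinality at most $2^{\aleph_0}$; in particular, separable closed subspaces are metrizable by hypothesis, and therefore Lindel\"of.

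Next I would rule out a copy of $\omega_1$ inside $X$. If $W \subseteq X$ with $W \cong \omega_1$, then $W$ must already be closed in $X$: being countably compact and first countable, $W$ is sequentially compact, so any sequence from $W$ convergent in $X$ has a subsequence with a limit in $W$, and Hausdorffness forces the two limits to coincide. But then $W$ is a closed subspace of $X$ of cardinality $\aleph_1 \leq 2^{\aleph_0}$, hence metrizable by hypothesis---contradicting the fact that $\omega_1$ is countably compact but not compact. So $X$ contains no copy of $\omega_1$, and Theorem~\ref{thm312} gives that $X$ is paracompact.

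Finally I would finish by a standard structure argument. A paracompact locally compact space is a topological sum of $\sigma$-compact clopen subspaces. Each such piece is locally countable and $\sigma$-compact, so is countable: a compact subset of a locally countable space is covered by finitely many countable open sets and hence is countable, so the whole piece, being a countable union of such compacta, is countable. Each countable piece is a closed subspace of $X$ of size $\aleph_0 \leq 2^{\aleph_0}$ and so is metrizable by hypothesis; a topological sum of metrizable spaces is metrizable, so $X$ is metrizable. The main obstacle is the verification of the hypotheses of Theorem~\ref{thm312}; once first countability, the Lindel\"of property for separable closed subspaces, and the absence of a copy of $\omega_1$ are in hand, everything else is routine.
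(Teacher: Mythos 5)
Your proof is correct, and it reaches paracompactness by a genuinely different route from the paper. The paper first shows that $X$ has the Moving Off Property --- closed subspaces of size $\leq 2^{\aleph_0}$ are metrizable, hence paracompact, hence have the MOP by Lemma~\ref{lem6}, so $X$ itself has the MOP by the quoted theorem of \cite{T5} on locally countable, locally compact spaces --- and then invokes Theorem~\ref{thm35}(5), checking that countable sets have Lindel\"of closures via the cardinality bound $|\overline{Y}|\leq 2^{\aleph_0}$. You instead verify the hypotheses of Theorem~\ref{thm312} directly: first countability from local compactness plus local countability, the Lindel\"of property of separable closed subspaces from the same cardinality bound, and --- the step that replaces the MOP argument --- the absence of a copy of $\omega_1$, obtained by observing that such a copy would be closed (countably compact subspace of a first countable Hausdorff space) and of size $\aleph_1\leq 2^{\aleph_0}$, hence metrizable, which is absurd. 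Since Theorem~\ref{thm35}(5) is itself deduced from Theorem~\ref{thm312} in the paper, the two arguments converge on the same underlying reflection result, but yours is more self-contained: it bypasses the MOP transfer theorem of \cite{T5} entirely, whereas the paper's phrasing is natural in context only because that MOP fact had just been established for the example preceding the theorem. The concluding decomposition --- topological sum of $\sigma$-compact, hence countable, hence metrizable clopen pieces --- is the same in both.
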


\begin{proof}
	By the preceding proof, $X$ has the MOP. By \ref{thm35}, to get that $X$ is paracompact, it suffices to show that countable subspaces of $X$ have Lindel\"of closures. But if $Y$ is a countable subset of $X$, $|\overline{Y}|\leq 2^{\aleph_0}$ and hence is separable metrizable and hence Lindel\"of. Once we have $X$ paracompact, it follows that $X$ is a topological sum of $\sigma$-compact subspaces. But each of these has size $\leq 2^{\aleph_0}$ and so is metrizable.
\end{proof}

\textbf{Axiom R} precludes stationary non-reflecting sets of $\omega$-cofinal ordinals in $\omega_s$, and hence the locally compact, $\aleph_1$-collectionwise Hausdorff ladder system space built on such a set; we can therefore ask:

\begin{prob}\textup{
	Does MM$(S)[S]$ imply \textbf{LCN}? Indeed, does MM imply locally compact $\aleph_1$-collectionwise Hausdorff spaces are collectionwise Hausdorff?}
\end{prob}

\section{Examples}

A question left open in \cite{LT1} is whether, as was shown for adjoining $\aleph_2$ Cohen subsets of $\omega_1$ in \cite{T1}, forcing with a Souslin tree would make normal spaces of character $\aleph_1$ $\aleph_1$-collectionwise Hausdorff. We shall show that the answer is negative by showing:

\begin{thm}
	\textup{MA}$_{\omega_1}(S)[S]$ implies that there is a normal non-$\aleph_1$-collectionwise Hausdorff space of character $\aleph_1$.
\end{thm}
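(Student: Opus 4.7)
The plan is to construct, in the $\mathrm{MA}_{\omega_1}(S)[S]$ extension, a space $X$ with an unseparable closed discrete subspace $D = \{d_\alpha : \alpha < \omega_1\}$ where every point of $D$ has character exactly $\aleph_1$. Because Lemma \ref{LT1} shows that normal first countable spaces in this model are $\aleph_1$-collectionwise Hausdorff, the counterexample must contain points with no countable neighbourhood base. A natural template is a space $X = D \cup T$ in which $T$ consists of isolated points and each $d_\alpha$ has a decreasing chain of basic neighbourhoods $\{U(\alpha, \xi) : \xi < \omega_1\}$ chosen so that no countable sub-chain is cofinal in the neighbourhood filter at $d_\alpha$, thereby forcing character exactly $\aleph_1$ at $d_\alpha$.

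I would work in the ground model $V \models \mathrm{MA}_{\omega_1}(S)$ and define the neighbourhoods $U(\alpha, \xi)$ as $S$-names, using a coherent assignment $\alpha \mapsto s_\alpha \in S_\alpha$ to encode the failure of $\aleph_1$-cwH. Normality would be secured by applying $\mathrm{MA}_{\omega_1}(S)$ to suitable countable-chain-condition forcings in the style of \cite{T1}, which separate pairs of disjoint closed sets in the $S$-extension while preserving $S$. The failure of $\aleph_1$-collectionwise Hausdorffness is then derived as follows: given a hypothetical disjoint open expansion $\{V_\alpha : \alpha < \omega_1\}$ of $D$ in $V[G]$, the coherent assignment reads off a ground-model object whose combinatorial properties are refuted by genericity of the branch $b \subseteq S$, in the spirit of the Larson--Todorcevic argument of Lemma \ref{LT1} but with the roles of normality and $\aleph_1$-cwH reversed.

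The main obstacle is exactly the tension exploited in Lemma \ref{LT1}: for first countable normal spaces, $S$-forcing forces $\aleph_1$-cwH precisely because countable neighbourhood bases are decided by countable antichains of $S$. The key technical step is to show that the extra room provided by uncountable neighbourhood bases at each $d_\alpha$ lets one thread the needle — the ccc forcings that witness normality handle arbitrary pairs of disjoint closed sets (each separation being a countable combinatorial task), but the $\omega_1$-uniform choice demanded by $\aleph_1$-collectionwise Hausdorffness cannot be supplied without contradicting a diagonalisation built into the scaffolding. Verifying that these two tasks can be simultaneously arranged — normality achievable, cwH refutable — is the heart of the construction.
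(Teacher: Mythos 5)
There is a genuine gap: your proposal is a statement of intent rather than a construction. You never actually define the space --- the neighbourhoods $U(\alpha,\xi)$, the set $T$ of isolated points, and the ``coherent assignment'' are all left unspecified --- so neither of the two claims that need proving (normality holds, $\aleph_1$-cwH fails) can be checked. The paper's proof lives or dies on a specific device that is absent from your sketch: one fixes in the ground model an almost-decreasing family $\{a_s : s\in S\}\subseteq[\omega]^\omega$ indexed by the coherent Souslin tree, with $a_t\subseteq^* a_s$ for $s<t$ and with $\{a_s : s\in S_\gamma\}$ pairwise disjoint on each level, together with ladders $L_\delta$ cofinal in each limit $\delta$; the neighbourhood filter at $\delta$ in the extension is then $\{L_\delta[a_s]\cup\{\delta\} : s\in g\}$, where $g$ is the generic branch. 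This is what simultaneously gives character $\aleph_1$, ties the space to $S$, and makes the failure of separation a one-line pressing-down argument (any neighbourhood assignment to the limit ordinals must produce two intersecting ladders, by Fodor) --- not, as you suggest, an argument that ``genericity of the branch refutes a ground-model object.''

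The second, and harder, missing piece is normality. Saying that it ``would be secured by applying $\mathrm{MA}_{\omega_1}(S)$ to suitable ccc forcings in the style of \cite{T1}'' skips the two steps that constitute the actual work. First, $\mathrm{MA}_{\omega_1}(S)$ holds in the intermediate model, but normality must be verified in $V[G]$; the paper handles this by quantifying over $S$-names $\dot f:C_0\to 2$ for partitions and exploiting that each $s\in S_{\delta^+}$ decides $\dot f(\alpha)$ for $\alpha<\delta$ and decides which $a_s$ generates the neighbourhood of $\delta$, thereby reducing separation in $V[G]$ to a ground-model uniformization problem for the functions $f_\delta\circ L_\delta^{-1}$. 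Second, one must prove that the resulting uniformization poset $\mathcal{Q}$ is ccc (the paper does this with an elementary-submodel argument); this is exactly the point where the tension with Lemma \ref{LT1} that you correctly identify gets resolved, and it cannot be taken on faith --- for arbitrary ladder-system-like spaces the analogous poset need not be ccc, and indeed without the level-wise disjointness and $\subseteq^*$-coherence of the $a_s$'s the whole construction collapses. Until you specify the neighbourhood bases concretely and carry out the ccc verification and the name-to-ground-model reduction, the ``heart of the construction'' you point to remains unproved.
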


\begin{proof}
	Let $S\subseteq 2^{<\omega_1}$ be a coherent Souslin tree. Fix a family ${\{a_s:s\in S\}\subseteq[\omega]^\omega}$ so that for $s<t\in S$, $a_t\subseteq^* a_s$ and for each $\gamma\in\omega_1$, $\{a_s:s\in S_\gamma\}$ is pairwise disjoint.
	
	For each limit $\delta\in\omega_1$, let $L_\delta\in\delta^\omega$ be a strictly increasing function with range cofinal in $\delta$ consisting of successor ordinals. For $a\subseteq\omega,$ let $L[a]=\{L_\delta(n):n\in a \}$. The generic $g$ for $S$ will enable us to define the required topology on the set $\omega_1$. We declare each successor ordinal to be isolated. For each limit $\delta$, the neighborhood filter for $\delta$ will be $\{L_\delta[a_s]\cup\{\delta\}:s\in g \}$. The set $C_0$ of limit ordinals is then a closed discrete set. By pressing down, we see that $C_0$ cannot be separated. It remains to show that the space is normal. It suffices to show that if $f$ is an $S$-name of a function from $C_0$ to $2$, then there is a neighborhood assignment $\{\dot{U}_\delta:\delta\in C_0\}$ and a cub $C_1$, such that for each $\alpha<\delta\in C_1$, $S$ forces that if $\dot{f}(\alpha)\neq\dot{f}(\delta)$, then $\dot{U}(\alpha)$ and $\dot{U}(\delta)$ are disjoint.
	
	There is a cub $C_1\subseteq C_0$ so that for all $\delta\in C_1$ and $\alpha<\delta_1$ each $s\in S_\delta$ decides the value of $\dot{f}(\alpha)$. For each $\delta\in C_0$, let $\delta^+$ denote the minimal element of $C_1$ above $\delta$, and choose a function $f_\delta:\omega\to 2$ so that for each $s\in S_{\delta^+}$ and each $n\in a_s$, $s$ forces $\dot{f}(\delta)=f_\delta(n)$. We will define an integer $n_\delta$ such that the value of $\dot{U}_\delta$ is forced by $s\in S_{\delta^+}$ to equal $\{\delta\}\cup L_{\delta}[a_s\setminus n_\delta]$. The sequence of functions $\{f_\delta:\delta\in C_0\}$ will be in the MA$_{\omega_1}(S)$ model.
	
	Let $\mathcal{Q}$ be the poset of partial functions $h$ from $\omega_1$ into $2$ such that ${h=^*\bigcup\{f_\delta\circ L_\delta^{-1}: \delta\in H \}}$, for some $H\in[C_0]^{<\omega}$. $\mathcal{Q}$ is ordered by extension. We claim that in ZFC, $\mathcal{Q}$ is ccc. If so, there will be a generic for $\aleph_1$ dense subsets of $\mathcal{Q}$ in a model of MA$_{\omega_1}(S)$. Let $\mathcal{H}=\{(h_\alpha,H_\alpha):\alpha\in\omega_1 \}$ be a subset of $\mathcal{Q}\times[C_0]^{<\omega}$, where $h_\alpha=\bigcup\{f_\delta\circ L_\delta^{-1}:\delta\in H_\alpha \}$. Choose any countable elementary submodel $M$ with $\mathcal{Q}$ and $\mathcal{H}$ in $M$. Let $\delta=M\cap\omega_1$ and $H_\delta\cap M=H$ and $H_\delta\setminus M=\{\delta_i:i<l\}$. We may assume that $\delta_0=\delta$ and then choose $\alpha_0\in M$ so that $H\subseteq\alpha_0$ and $L_{\delta_i}\cap\delta\subseteq\alpha_0$, for $0<i<l$. Notice that $h_\delta\!\!\upharpoonright\!\!\alpha$ is an element of $M$, for all $\alpha\in M$. In $M$, recursively choose $\alpha_0<\alpha_1<\cdots$ so that $h_{\alpha_{n+1}}\!\!\upharpoonright\!\!\alpha_n = h_\delta\!\!\upharpoonright\!\!\alpha_n$ and dom$(h_{\alpha_{n+1}})\subseteq\alpha_{n+2}$. With $\beta=\sup_n\alpha_n<\delta$, we have that there is an $n\in\omega$ such that $h_\delta\!\!\upharpoonright\!\!\beta = h_\delta\!\!\upharpoonright\!\!\alpha_n$. It follows that $h_\delta\!\!\upharpoonright\!\!\alpha_n\subseteq h_{\alpha_{n+1}}$, and so $h_\delta$ and $h_{\alpha_{n+1}}$ are compatible members of $\mathcal{Q}$.
	
	MA$_{\omega_1}(S)$ implies there is a generic for $\mathcal{Q}$ that adds a function $h$ from $\omega_1$ to $2$ that mod finite extends $f_\delta\circ L_\delta^{-1}$, for all $\delta\in C_0$. Now define $n_\delta$ to be chosen so that $h$ actually extends $f_\delta\circ L_\delta^{-1}[\omega\setminus n_\delta]$. Suppose $\alpha<\delta$, with $\delta\in C_1$, and let $s\in S_{\delta^+}$. Then $s$ forces that $f_\delta\circ L^{-1}_\delta=\dot{f}_\delta$ on $a_s$, and similarly, $s\!\!\upharpoonright\!\alpha^+$ forces that $f_\alpha\circ L^{-1}_\alpha=\dot{f}_\alpha$ on $a_{s\upharpoonright\alpha^+}$. Also, $h$ agrees with $f_\delta\circ L_\delta^{-1}$ on $a_s\setminus n_\delta$ and with $f_\alpha\circ L_\alpha^{-1}$ on $a_{s\upharpoonright\alpha^+}\setminus n_\alpha$. Thus if $\beta\in L_\delta[a_s\setminus n_\delta]\cap L_{\alpha}[a_{s\upharpoonright\alpha^+}\setminus n_\alpha]$, then $h(\beta)=\dot{f}(\alpha)=\dot{f}(\delta)$. This completes the proof that the space is normal.
\end{proof}

The strategy attempted in \cite{T3} was to expand a closed discrete
subspace of a locally compact normal space to a discrete collection of
compact $G_\delta$'s. There are limitations on such an approach, given
by the following example. 

\begin{thm}
	MA$_{\omega_1}(S)[S]$ implies there is a locally compact space of character $\aleph_1$ which includes a normalized closed discrete set which does not have a normalized discrete expansion by compact $G_\delta$'s.
\end{thm}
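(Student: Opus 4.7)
The plan is to build on the Theorem~5.1 construction and arrange additionally that the points of the closed discrete set fail to be $G_\delta$. Retain the coherent Souslin tree $S$, the almost-decreasing, pairwise-disjoint-per-level family $\{a_s:s\in S\}\subseteq[\omega]^\omega$, and the cofinal ladders $L_\delta:\omega\to\delta$ for $\delta\in C_0$. In the $S$-extension $V[g]$, declare successor ordinals isolated and give $\delta\in C_0$ the neighborhood base $\{L_\delta[a_s]\cup\{\delta\}:s\in g\}$. Each such basic neighborhood is compact and clopen (its only accumulation point is $\delta$), so $X$ is locally compact of character $\aleph_1$ and $D:=C_0$ is closed discrete. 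That $D$ is normalized is exactly the content of the Theorem~5.1 argument, which addresses arbitrary $S$-names $\dot f:C_0\to 2$. To this we add one stipulation on the choice of $\{a_s\}$: arrange that for every $\omega$-chain $s_0<s_1<\cdots$ in $S$, the set $\bigcap_n a_{s_n}$ is infinite. A standard bookkeeping during the recursive construction of $\{a_s\}$ achieves this, and it forces $\{\delta\}$ to fail to be $G_\delta$, so the trivial expansion by singletons does not qualify as ``by compact $G_\delta$'s.''

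For the obstruction, suppose toward a contradiction that in $V[g]$ there is a discrete family $\mathcal{K}=\{K_\delta:\delta\in D\}$ of compact $G_\delta$'s with $\delta\in K_\delta$, normalized for every partition of $D$. A compact subset of $X$ containing $\delta$ has the form $\{\delta\}\cup F$ where $F$ is a set of isolated successor ordinals whose only $X$-accumulation point is $\delta$; being also $G_\delta$ forces $F=L_\delta[b_\delta]$ for some $b_\delta=\bigcap_n a_{s^\delta_n}$ arising from an $\omega$-chain $(s^\delta_n)$ in $g$, and since $\{\delta\}$ is not $G_\delta$, the set $b_\delta$ is infinite. Discreteness of $\mathcal{K}$ implies that $\{L_\delta[b_\delta]:\delta\in D\}$ is pairwise disjoint as a family of subsets of the isolated successor ordinals. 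A normalizing pair $(V_0,V_1)$ for a coloring $f:D\to 2$ must, for each $\delta$, place a basic neighborhood $L_\delta[a_{s^\ast_\delta}]\cup\{\delta\}\subseteq V_{f(\delta)}$; disjointness of $V_0,V_1$ then forces $f(\delta')=f(\delta)$ whenever some $L_\delta(k)$ with $k\in a_{s^\ast_\delta}$ lies in $K_{\delta'}$.

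The last step is to produce, via MA$_{\omega_1}(S)$ in $V$, an $S$-name $\dot f:D\to 2$ for which the preceding consistency requirement fails for every conceivable choice $\delta\mapsto s^\ast_\delta\in g$. The instrument is a ccc poset $\mathcal{Q}'$ dual to the $\mathcal{Q}$ of Theorem~5.1: where $\mathcal{Q}$ certified the existence of a function $h:\omega_1\to 2$ mod-finitely extending the $f_\delta\circ L_\delta^{-1}$, the new $\mathcal{Q}'$ adjoins a partition $\dot f$ together with an obstruction certifying that no such $h$ can mod-finitely extend $f_\delta\circ L_\delta^{-1}$ on $b_\delta$ for stationarily many $\delta$. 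The main obstacle is verifying the ccc for $\mathcal{Q}'$. The approach is a pressing-down argument: although $\delta\mapsto b_\delta$ is a $V[g]$-object, ccc of $S$ reduces each $b_\delta$ to being coded by countably many $V$-names, and coherence of $S$ then coordinates the resulting ground-model data along an uncountable subfamily in the style of the proof of Theorem~5.1, delivering both the ccc and the required contradiction from the stationarity extracted by Fodor's lemma on the failure set.
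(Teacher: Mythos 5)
Your construction does not produce a locally compact space, and this is fatal to the whole approach. In the Theorem on normal non-$\aleph_1$-collectionwise Hausdorff spaces that you are building on, the basic neighbourhood $L_\delta[a_s]\cup\{\delta\}$ is \emph{not} compact: along the generic branch $g$ the sets $a_t$ ($t\in g$, $t>s$) are strictly $\subseteq^*$-decreasing (pairwise disjointness at each level forces $a_s\setminus a_t$ to be infinite), so the cover consisting of $L_\delta[a_t]\cup\{\delta\}$ together with the singletons of the infinitely many points of $L_\delta[a_s\setminus a_t]$ has no finite subcover. Having $\delta$ as the unique accumulation point is not enough for compactness --- the situation is exactly that of $\omega\cup\{p\}$ for a nonprincipal ultrafilter $p$ in $\beta\omega$. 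Worse, since $S$ adds no reals and for each infinite $b$ at most one node per level satisfies $b\subseteq^*a_t$, the tower $\{a_s:s\in g\}$ generically has no infinite pseudo-intersection; hence the only compact sets containing $\delta$ are finite and $\delta$ has no compact neighbourhood at all. Your extra stipulation that $\bigcap_n a_{s_n}$ be infinite along $\omega$-chains does kill $G_\delta$-ness of $\{\delta\}$, but it does nothing to restore local compactness, and your subsequent claim that a compact $G_\delta$ containing $\delta$ must be of the form $\{\delta\}\cup L_\delta[b_\delta]$ with $b_\delta$ an infinite pseudo-intersection of an $\omega$-chain is therefore unfounded. The paper's construction avoids precisely this problem by fattening each $\delta\in C_0$ into a copy of the Stone space of the algebra generated by $[\omega]^{<\omega}\cup\{a_s:s\in S\}$, taking as the basic neighbourhoods the genuinely compact sets $L_\delta[a]\cup(\{\delta\}\times a^*)$, and placing the closed discrete set at the points $(\delta,x_g)$ determined by the generic ultrafilter.

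There is a second, independent gap: the final step of your argument is not a proof. You never define the poset $\mathcal{Q}'$, never specify the ``obstruction'' it is supposed to adjoin, and defer both its ccc-ness and the derivation of the contradiction to an unspecified ``pressing-down argument.'' Note that the paper's proof of this theorem does not introduce any new ccc forcing at this stage: the bad partition $\dot f$ is read off directly from the generic branch (via the node $g(\alpha^+)$), the stationary set $E$ of ordinals $\delta$ whose assigned neighbourhood meets the union of the earlier ones infinitely often is obtained by pressing down in $V[g]$, and the contradiction comes from choosing a condition that forces $\delta\in\dot E$ while assigning $\delta$ the opposite colour from the relevant $\alpha<\delta$. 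If you repair the space, that is the argument you would still need to supply.
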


\begin{proof}
	We modify the previous example. Let $\mathcal{A}_s$ denote the Boolean subalgebra of $\mathcal{P}(\omega)$ generated by $[\omega]^{<\omega}\cup\{a_s:s\in S\}$. In the forcing extension by $S$, let $x_g$ denote the member of the Stone space $\mathcal{S}(\mathcal{A}_s/\text{FIN})$ containing $\{a_s:s\in g\}$.
	
	In the forcing extension, our space has the base set $(\omega_1\setminus C_0)\cup(C_0\times\mathcal{S}(\mathcal{A}_s))$. The points of $\omega_1\setminus C_0$ are isolated. For each $\delta\in C_0$ and $x\in\mathcal{S}(\mathcal{A}_s/\text{FIN})$, a neighborhood of $(\delta,x)$ must include $U_\delta(a)=L_\delta[a]\cup(\{\delta\}\times a^*)$ for some $a\in x$, where $a^* = \{p\in\mathcal{S}(\mathcal{A}_\mathcal{S}):a\in p\}$. Notice that $U_\delta(a)$ is disjoint from $\{\gamma\}\times\mathcal{S}(\mathcal{A}_s/\text{FIN})$, for all $\gamma\neq\delta$. It follows immediately that the sequence $D=\{(\gamma,x_g):\delta\in C_0\}$ is a closed discrete subset. It also follows from the proof of the normality of the previous example that $D$ is normalized.
	
	Now we show that $D$ does not have a normalized discrete expansion by compact $G_\delta$'s, indeed by any $G_\delta$'s. Assume that $\{\dot{Z}_\delta:\delta\in C_0\}$ is a sequence of $S$-names so that $\dot{Z}_\delta$ is forced to be a $G_\delta$ containing $(\delta,x_g)$. There is a cub $C_1$ such that for each $\alpha\in C_0$ and each $s\in S_{\alpha^+}$ (again, $\alpha^+$ is the minimal element of $C_1$ above $\alpha$), $s$ forces that $\dot{Z}_\alpha$ contains $\{\alpha\}\times a_s^*$. Since $S$ is ccc, the cub $C_1$ can be chosen to be a member of the PFA$(S)$ model.
	
	We use $C_1$ to define a partition of $C_0$: for each $\alpha\in C_0$, we define $\dot{f}(\alpha)$ to equal the value $g(\alpha^+)$ (i.e. the element of $S_{\alpha^+}$ that $g$ picks). Thus if $\delta$ is a limit of $C_1$ and $s\in S_\delta$, then $s$ forces a value for $\dot{f}\!\!\upharpoonright\!\!\delta$. Then a potential normalizing expansion would consist of a sequence $\{\dot{n}_\alpha:\alpha\in C_0\}$ of $S$-names of integers for which $L_\alpha[a_{g\upharpoonright\alpha^+}\setminus\dot{n}_\alpha]\cup(\{\alpha\}\times a_{g\upharpoonright\alpha^+}^*)$ is an open neighborhood of $\dot{Z}_\alpha$. There is a cub $C_2\subseteq C_1$ so that for each $\delta\in C_2$ and each $s\in S_\delta$, $s$ forces a value on $\dot{n}_\alpha$ for all $\alpha<\delta$. We may choose any $s_0\in g$ so that $s_0$ forces that $L_\alpha[a_{g\upharpoonright\alpha^+}\setminus\dot{n}_\alpha]\cap L_\delta[a_{g\upharpoonright\delta^+}\setminus\dot{n}_\delta]$ is empty whenever $\dot{f}(\alpha)\neq\dot{f}(\delta)$. Working in $V[g]$, we prove there is a stationary $E$ satisfying that $L_\delta[a_{g\upharpoonright\delta^+}]\cap\bigcup\{L_\alpha[a_{g\upharpoonright\alpha^+}\setminus\dot{n}_\alpha]:\alpha\in\delta\}$ is infinite, for all $\delta\in E$. If not, then there would be an assignment $\langle m_\delta:\delta\in C\rangle$ (for some cub $C$) so that $L_\delta[a_{g\upharpoonright\delta^+}\setminus m_\delta]$ would be disjoint from $\bigcup\{L_\alpha[a_{g\upharpoonright\alpha^+}\setminus\dot{n}_\alpha]:\alpha\in\delta\}$, for all $\delta\in C$. Pressing down, we would arrive at a contradiction.
	
	Let $\dot{E}$ denote the $S$-name of the stationary set whose existence was shown in the previous paragraph. Choose any $s$ above $s_0$ and any $\delta\in C_2$ such that $s$ forces that $\delta\in\dot{E}$. Without loss of generality, the height of $s$ is $\geq\delta^+$, but note that $s\!\!\upharpoonright\!\!\delta$ forces a value on $\dot{n}_\alpha$, for all $\alpha<\delta$. This means that $s\!\!\upharpoonright\!\!\delta^+$ forces that $\delta\in \dot{E}$, since it will also decide the value of $L_\delta[a_{g\upharpoonright\delta^+}]$. We also have that $s\!\!\upharpoonright\!\!\delta$ forces a value on $\dot{f}\!\!\upharpoonright\!\!\delta$ and so we can choose a value $e\in\{0,1\}$ so that $s\!\!\upharpoonright\!\!\delta$ forces that $L_\delta[a_{s\upharpoonright\delta^+}]$ intersected with $\{L_\alpha[a_{s\upharpoonright\alpha^+}\setminus\dot{n}_\alpha]:\alpha<\delta\text{ and }\dot{f}(\alpha)=e\}$ is infinite. We now have a contradiction, since $s\!\!\upharpoonright\!\!\delta^+\cup\{(\delta^+,1-e)\}$ forces that the assigned neighborhood of $\delta$ must meet the assigned neighborhood of $\alpha$, for some $\alpha<\delta$ with $\dot{f}(\alpha)=e\neq\dot{f}(\delta)$.
\end{proof}

\section{Point-countable type}

There is another normal-implies-collectionwise-Hausdorff result holding in $L$ for which we don't know whether it holds in our MM$(S)[S]$ model:

\begin{defn}
	A space is of \textbf{point-countable type} if each point is a member of a compact subspace which has a countable outer neighbourhood base.
\end{defn}

Spaces of point-countable type simultaneously generalize locally compact and first countable spaces, and V$=$L implies normal spaces of point-countable type are collectionwise Hausdorff \cite{W}.

\begin{prob}
	Does MM$(S)[S]$ imply normal spaces of point-countable type are $\aleph_1$-collectionwise Hausdorff?
\end{prob}

The usual arguments would show that if so, in our front-loaded model of MM$(S)[S]$, normal spaces of point-countable type would be collectionwise Hausdorff.

\vspace{1cm}

\textbf{Acknowledgement.} We thank Peter Nyikos for catching errors in an earlier version of this manuscript.

\nocite{*}
\bibliographystyle{acm}
\bibliography{normality}

\begin{thebibliography}{10}

\bibitem{A2}
{\sc Arhangel'ski\u\i, A.~V.}
\newblock Bicompacta that satisfy the {S}uslin condition hereditarily.
  {T}ightness and free sequences.
\newblock {\em Dokl. Akad. Nauk SSSR 199\/} (1971), 1227--1230.

\bibitem{A}
{\sc Arhangel'ski\u\i, A.~V.}
\newblock The property of paracompactness in the class of perfectly normal
  locally bicompact spaces.
\newblock {\em Dokl. Akad. Nauk SSSR 203\/} (1972), 1231--1234.

\bibitem{B1}
{\sc Balogh, Z.~T.}
\newblock Locally nice spaces under {M}artin's axiom.
\newblock {\em Comment. Math. Univ. Carolin. 24}, 1 (1983), 63--87.

\bibitem{B2}
{\sc Balogh, Z.~T.}
\newblock Locally nice spaces and axiom {R}.
\newblock {\em Topology Appl. 125}, 2 (2002), 335--341.

\bibitem{BDFN}
{\sc Balogh, Z.~T., Dow, A., Fremlin, D.~H., and Nyikos, P.~J.}
\newblock Countable tightness and proper forcing.
\newblock {\em Bull. Amer. Math. Soc. 19\/} (1988), 295--298.

\bibitem{BR}
{\sc Balogh, Z.~T., and Rudin, M.~E.}
\newblock Monotone normality.
\newblock {\em Topology Appl. 47}, 2 (1992), 115--127.

\bibitem{vD}
{\sc {\noopsort{Douwen}}Van~Douwen, E.~K.}
\newblock A technique for constructing honest locally compact submetrizable
  examples.
\newblock {\em Topology Appl. 47\/} (1992), 179--201.

\bibitem{D}
{\sc Dow, A.}
\newblock On the consistency of the {M}oore-{M}r\'owka solution. {I}n {{\it
  Proceedings of the Symposium on General Topology and Applications (Oxford,
  1989)}}.
\newblock {\em Topology Appl. 44\/} (1992), 125--141.

\bibitem{D2}
{\sc Dow, A.}
\newblock Set-theoretic update on topology.
\newblock In {\em Recent {P}rogress in {G}eneral {T}opology III ({P}rague,
  2011)\/} (North-Holland, Amsterdam, 2014), K.~P. Hart, J.~van Mill, and
  P.~Simon, Eds., pp.~329--357.

\bibitem{DT1}
{\sc Dow, A., and Tall, F.~D.}
\newblock Hereditarily normal manifolds of dimension $> 1$ may all be
  metrizable, submitted.

\bibitem{DT2}
{\sc Dow, A., and Tall, F.~D.}
\newblock {PFA$(S)[S]$} and countably compact spaces, submitted.

\bibitem{EN}
{\sc Eisworth, T., and Nyikos, P.~J.}
\newblock Antidiamond principles and topological applications.
\newblock {\em Trans. Amer. Math. Soc. 361\/} (2009), 5695--5719.

\bibitem{E}
{\sc Engelking, R.}
\newblock {\em General {T}opology}.
\newblock Heldermann Verlag, Berlin, 1989.

\bibitem{FJ}
{\sc Feng, Q., and Jech, T.}
\newblock Projective stationary sets and a strong reflection principle.
\newblock {\em J. London Math. Soc. (2) 58}, 2 (1998), 271--283.

\bibitem{FTT}
{\sc Fischer, A.~J., Tall, F.~D., and Todorcevic, S.}
\newblock Forcing with a coherent {S}ouslin tree and locally countable
  subspaces of countably tight compact spaces.
\newblock {\em Topology Appl. 195\/} (2015), 284--296.

\bibitem{F}
{\sc Fleissner, W.~G.}
\newblock Normal {M}oore spaces in the constructible universe.
\newblock {\em Proc. Amer. Math. Soc. 46\/} (1974), 294--298.

\bibitem{Fle}
{\sc Fleissner, W.~G.}
\newblock Left separated spaces with point-countable bases.
\newblock {\em Trans. Amer. Math. Soc. 294}, 2 (1986), 665--677.

\bibitem{FMS}
{\sc Foreman, M., Magidor, M., and Shelah, S.}
\newblock Martin's maximum, saturated ideals and nonregular ultrafilters {I}.
\newblock {\em Annals of Math. 127}, 2 (1988), 1--47.

\bibitem{G}
{\sc Good, C.}
\newblock Large cardinals and small {D}owker spaces.
\newblock {\em Proc. Amer. Math. Soc. 123\/} (1995), 263--272.

\bibitem{GK}
{\sc Gruenhage, G., and Koszmider, P.}
\newblock The {A}rkhangel'ski{\u\i}-{T}all problem under {M}artin's axiom.
\newblock {\em Fund. Math. 149}, 3 (1996), 275--285.

\bibitem{GM}
{\sc Gruenhage, G., and Ma, D.}
\newblock Baireness of {$C_k(X)$} for locally compact {$X$}.
\newblock {\em Topology Appl. 80\/} (1997), 131--139.

\bibitem{IN}
{\sc Ismail, M., and {N}yikos, P.}
\newblock On spaces in which countably compact subsets are closed, and
  hereditary properties.
\newblock {\em Topology Appl. 11\/} (1980), 281--292.

\bibitem{J}
{\sc Jech, T.}
\newblock Stationary sets.
\newblock In {\em Handbook of {S}et {T}heory}, M.~Forman and Kanamori, Eds.
  Springer, Amsterdam, 2010, pp.~93--128.

\bibitem{Lar3}
{\sc Larson, P.}
\newblock An {$\mathbb{S}\sb {\rm max}$} variation for one {S}ouslin tree.
\newblock {\em J. Symbolic Logic 64}, 1 (1999), 81--98.

\bibitem{Lar}
{\sc Larson, P.}
\newblock Martin's maximum and the {$\bold P_{\max}$} axiom.
\newblock {\em Ann. Pure Appl. Logic 106}, 1-3 (2000), 135--149.

\bibitem{LT1}
{\sc Larson, P., and Tall, F.~D.}
\newblock Locally compact perfectly normal spaces may all be paracompact.
\newblock {\em Fund. Math. 210\/} (2010), 285--300.

\bibitem{LT2}
{\sc Larson, P., and Tall, F.~D.}
\newblock On the hereditary paracompactness of locally compact hereditarily
  normal spaces.
\newblock {\em Canad. Math. Bull. 57\/} (2014), 579--584.

\bibitem{LTo}
{\sc Larson, P., and Todorcevic, S.}
\newblock Kat\v etov's problem.
\newblock {\em Trans. Amer. Math. Soc. 354\/} (2002), 1783--1791.

\bibitem{L}
{\sc Laver, R.}
\newblock Making the supercompactness of {$\kappa $} indestructible under
  {$\kappa $}-directed closed forcing.
\newblock {\em Israel J. Math. 29}, 4 (1978), 385--388.

\bibitem{MN}
{\sc McCoy, R.~A., and Ntantu, I.}
\newblock {\em Topological properties of spaces of continuous functions}.
\newblock No.~1315 in Lect. Notes in Math. Springer, New York, 1988.

\bibitem{M}
{\sc Miyamoto, T.}
\newblock On iterating semiproper preorders.
\newblock {\em J. Symb. Logic 67\/} (2002), 1431--1468.

\bibitem{Ny2}
{\sc Nyikos, P.~J.}
\newblock Crowding of functions, para-saturation of ideals, and topological
  applications.
\newblock In {\em Topology Proc.\/} (2004), vol.~28, Spring {T}opology and
  {D}ynamical {S}ystems {C}onference, pp.~241--266.

\bibitem{PW}
{\sc Porter, J., and Woods, R.}
\newblock {\em Extensions and Absolutes of {H}ausdorff Spaces}.
\newblock Springer-Verlag, New York, 1988.

\bibitem{S}
{\sc Shelah, S.}
\newblock {\em Proper and improper forcing}, second~ed.
\newblock Perspectives in Mathematical Logic. Springer-Verlag, Berlin, 1998.

\bibitem{Sz}
{\sc Szentmikl\'ossy, Z.}
\newblock {$S$}-{S}paces and {$L$}-spaces under {M}artin's {A}xiom.
\newblock {\em Coll. Math. Soc. Jan\'os Bolyai 23\/} (1978), 1139--1145,
  North--Holland, Amsterdam, 1980.

\bibitem{T1}
{\sc Tall, F.~D.}
\newblock Set-theoretic consistency results and topological theorems concerning
  the normal {M}oore space conjecture and related problems. {D}octoral
  {D}issertation, {U}niversity of {W}isconsin ({M}adison), 1969;.
\newblock {\em Dissertationes Math. (Rozprawy Mat.) 148\/} (1977), 1--53.

\bibitem{T2}
{\sc Tall, F.~D.}
\newblock Covering and separation properties in the {E}aston model.
\newblock {\em Topology Appl. 28}, 2 (1988), 155--163.

\bibitem{T4}
{\sc Tall, F.~D.}
\newblock {PFA$(S)[S]$} and the {A}rhangel'ski{\u\i}-{T}all problem.
\newblock {\em Topology Proc.\/} (2012), 109--120.

\bibitem{T3}
{\sc Tall, F.~D.}
\newblock {PFA$(S)[S]$}: more mutually consistent topological consequences of
  {PFA} and {$V=L$}.
\newblock {\em Canad. J. Math. 64\/} (2012), 1182--1200.

\bibitem{T}
{\sc Tall, F.~D.}
\newblock {$\mathrm{PFA}(S)[S]$} and locally compact normal spaces.
\newblock {\em Topology Appl. 162\/} (2014), 100--115.

\bibitem{T5}
{\sc Tall, F.~D.}
\newblock Some observations on the {B}aireness of ${C_k(X)}$ for a locally
  compact space {X}. \emph{Topology Appl.}, to appear.

\bibitem{T6}
{\sc Tall, F.~D.}
\newblock {PFA}{$(S)[S]$} for the masses, submitted.

\bibitem{To3}
{\sc Todorcevic, S.}
\newblock Directed sets and cofinal types.
\newblock {\em Trans. Amer. Math. Soc. 290}, 2 (1985), 711--723.

\bibitem{To4}
{\sc Todorcevic, S.}
\newblock {\em Walks on ordinals and their characteristics}, vol.~263 of {\em
  Progress in Mathematics}.
\newblock Birkh\"auser Verlag, Basel, 2007.

\bibitem{To2}
{\sc Todorcevic, S.}
\newblock Strong reflection principles, circulated notes, 1987.

\bibitem{To}
{\sc Todorcevic, S.}
\newblock Forcing with a coherent {S}ouslin tree, preprint.

\bibitem{W}
{\sc Watson, W.~S.}
\newblock Locally compact normal spaces in the constructible universe.
\newblock {\em Canad. J. Math. 34}, 5 (1982), 1091--1096.

\bibitem{Woo}
{\sc Woodin, W.~H.}
\newblock {\em The axiom of determinacy, forcing axioms, and the nonstationary
  ideal}, vol.~1 of {\em de Gruyter Series in Logic and its Applications}.
\newblock Walter de Gruyter \& Co., Berlin, 1999.

\end{thebibliography}

{\rm Alan Dow, Department of Mathematics and Statistics, University of North Carolina,
Charlotte, North Carolina 28223}

{\it e-mail address:} {\rm adow@uncc.edu}

{\rm Franklin D. Tall, Department of Mathematics, University of Toronto, Toronto, Ontario M5S 2E4, CANADA}

{\it e-mail address:} {\rm f.tall@math.utoronto.ca}

\end{document}